\newtheorem{theorem}{\bf Theorem}[section]
\newtheorem{lemma}[theorem]{\bf Lemma}
\newtheorem{remark}[theorem]{\bf Remark}
\newtheorem{proposition}[theorem]{\bf Proposition}
\newtheorem{corollary}[theorem]{\bf Corollary}
\newtheorem{definition}[theorem]{\bf Definition}
\newtheorem{example}[theorem]{\bf Example}
\newtheorem{problem}[theorem]{\bf Problem}
\newtheorem*{theor}{\bf Model Theorem}
\newtheorem*{conj}{\bf Conjecture}
\title[$\alpha$-stable coherent systems]{Existence of $\alpha$-stable coherent systems\\on algebraic curves}
\author{P. E. Newstead}
\address{Department of Mathematical Sciences\\University of Liverpool\\Peach Street\\Liverpool\\L69 7ZL\\UK}
\email{newstead@liv.ac.uk}
\subjclass[2000]{14H60}
\date{3 September 2008}
\keywords{Algebraic curve, vector bundle, coherent system, Brill-Noether locus, stability, moduli space} 
\thanks{This survey is a much extended version of a lecture given at the Clay workshop held in Cambridge, Massachusetts, Oct 6--10, 2006. The survey began life earlier than that as a set of notes on a series of three lectures given in Liverpool in August 2005 at a meeting of the research group VBAC (Vector Bundles on Algebraic Curves) which formed part of the activity of the Marie Curie Training Site LIMITS. My thanks are due to the Clay Mathematics Institute for supporting me as a Senior Scholar and to the European Commission for its support of LIMITS (Contract No. HPMT-CT-2001-00277). My thanks are due also to Cristian Gonz\'alez-Mart\'{\i}nez, who wrote the first draft of this survey shortly after the VBAC meeting.}
\begin{document}\maketitle
\tableofcontents

\newpage This survey covers the theory of moduli spaces of $\alpha$-stable coherent systems on a smooth projective curve $C$ of genus $g\ge2$. (The cases $g=0$ and $g=1$ are special cases, for which we refer to \cite{LN1,LN2,LN3,LN4}.) I have concentrated on the issues of emptiness and non-emptiness of the moduli spaces. Other questions concerning irreducibility, smoothness, singularities etc.\ and the relationship with Brill-Noether theory are mentioned where relevant but play a subsidiary r\^ole. In the bibliography, I have tried to include a full list of all relevant papers which contain results on coherent systems on algebraic curves. In general, I have not included papers concerned solely with Brill-Noether theory except where they are needed for reference, but I have included some which have clear relevance for coherent systems but have not been fully developed in this context. For another survey of coherent systems, concentrating on structural results and including an appendix on the cases $g=0$ and $g=1$, see \cite{BL}; for a survey of higher rank Brill-Noether theory, see \cite{GT}.

We work throughout over an
algebraically closed field of characteristic 0. Many results are undoubtedly valid in finite characteristic, but we omit this possibility here since most of the basic papers on coherent systems assume characteristic $0$. As stated above, we assume that $C$ is a smooth projective curve of genus $g\ge2$. We denote by $K$ the canonical bundle on $C$.

\section{Definitions}\label{sec1}

Coherent systems are simply the analogues for higher rank of classical linear systems.

\begin{definition}{\em A {\em coherent system} on $C$ of type $(n,d,k)$ is
a pair $(E,V)$ where $E$ is a vector bundle of rank $n$ and degree
$d$ over $C$ and $V$ is a linear subspace of dimension $k$ of
$H^0(E)$. A {\em coherent subsystem} of $(E,V)$ is a pair $(F,W)$, where $F$ is a subbundle of $E$ and $W\subseteq V\cap H^0(F)$. We say that $(E,V)$ is {\em generated} ({\em generically generated}) if the canonical homomorphism $V\otimes{\mathcal O}\to E$ is surjective (has torsion cokernel).}
\end{definition}

Stability of coherent systems is defined with respect to a real parameter $\alpha$. We
define the $\alpha$-slope $\mu_\alpha(E,V)$ of a coherent system $(E,V)$ by
\begin{equation*}\mu_{\alpha}(E,V):=\frac{d}{n}+\alpha \frac{k}{n}.
\end{equation*}

\begin{definition}\begin{em}
$(E,V)$ is $\alpha$-{\em stable} ($\alpha$-{\em semistable}) if, for
every proper coherent subsystem $(F,W)$ of $(E,V)$,
\begin{equation*}\mu_{\alpha}(F,W)<(\leq )\ \mu_{\alpha}(E,V).
\end{equation*}
\end{em}\end{definition}

Every $\alpha$-semistable coherent system $(E,V)$ admits a Jordan-H\"older filtration
$$0=(E_0,V_0)\subset (E_1,V_1)\subset\ldots\subset(E_m,V_m)=(E,V)$$
such that all $(E_j,V_j)$ have the same $\alpha$-slope and all the quotients $(E_j,V_j)/(E_{j-1},V_{j-1})$ are defined and $\alpha$-stable. The associated graded object
$$\bigoplus_1^m(E_j,V_j)/(E_{j-1},V_{j-1})$$
is determined by $(E,V)$ and denoted by $\mbox{gr}(E,V)$. Two $\alpha$-semistable coherent systems $(E,V)$, $(E',V')$ are said to be {\it S-equivalent} if $$\mbox{gr}(E,V)\cong\mbox{gr}(E',V').$$

There exists a moduli space $G(\alpha ;n,d,k)$ for $\alpha$- stable coherent systems of type
$(n,d,k)$ \cite{KN,LeP,RV,BGMN}; this is a quasi-projective scheme and has a natural completion to a projective scheme $\widetilde{G}(\alpha ;n,d,k)$, whose points correspond to S-equivalence
classes of $\alpha$-semistable coherent systems of type
$(n,d,k)$. In particular, we have $G(\alpha;n,d,0)=M(n,d)$ and  $\widetilde{G}(\alpha;n,d,0)=\widetilde{M}(n,d)$, where $M(n,d)$ ($\widetilde{M}(n,d)$) is the usual moduli space of stable bundles (S-equivalence classes of semistable bundles).

We define also the {\em Brill-Noether loci}
\begin{equation*}B(n,d,k):=\{ E\in M(n,d): h^0(E) \geq k \}
\end{equation*}
and
\begin{equation*}\widetilde{B}(n,d,k):=\{ [E]\in \widetilde{M}(n,d): h^0(\mbox{gr} E) \geq k \},
\end{equation*}
where, for any semistable bundle $E$, $[E]$ denotes the S-equivalence class of  $E$ and $\mbox{gr}E$ is the graded object associated to $E$.

For $k\ge1$, we must have $\alpha>0$ for the existence of $\alpha$-stable coherent systems (for $\alpha\le0$, $(E,0)$ contradicts the $\alpha$-stability of $(E,V)$). Given this, the moduli space $G(\alpha;1,d,k)$ is independent of $\alpha$ and we denote it by $G(1,d,k)$; it coincides with the classical variety of linear series $G_d^{k-1}$.  

So suppose
$n\geq 2$ and $k\geq 1$. \begin{lemma}\label{lem1}If $G(\alpha ;n,d,k)\neq
\emptyset $ then
\begin{equation}\label{conditions1}\textnormal{$\alpha > 0$, $d>0$, $\alpha
(n-k)<d$.}\end{equation}\end{lemma}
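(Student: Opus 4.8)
The plan is to obtain the three inequalities of (\ref{conditions1}) by testing the $\alpha$-stability of a given $(E,V)\in G(\alpha;n,d,k)$ against a handful of explicitly chosen coherent subsystems. The condition $\alpha>0$ has essentially been dealt with already: since $k\ge1$, the pair $(E,0)$ is a proper coherent subsystem, and $\mu_\alpha(E,0)=d/n<d/n+\alpha k/n=\mu_\alpha(E,V)$ forces $\alpha k/n>0$, hence $\alpha>0$. So it remains to establish $d>0$ and $\alpha(n-k)<d$.

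For $\alpha(n-k)<d$ I would pick any nonzero $s\in V$ and let $L\subseteq E$ be the line subbundle it generates, i.e.\ the saturation in $E$ of the rank-one subsheaf spanned by $s$. Since $L$ carries the nonzero section $s$ we have $\deg L\ge0$, and since $n\ge2$ the pair $(L,\langle s\rangle)$ is a \emph{proper} coherent subsystem. Then $\alpha$-stability gives $\deg L+\alpha=\mu_\alpha(L,\langle s\rangle)<\mu_\alpha(E,V)=d/n+\alpha k/n$, and using $\deg L\ge0$ and clearing denominators yields $\alpha(n-k)<d$. Note that this already forces $d>0$ when $k\le n$ (for $k<n$ one has $d>\alpha(n-k)\ge\alpha>0$, and for $k=n$ it is immediate), so the substantive remaining case is $k>n$; the argument below, however, treats all cases at once.

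For $d>0$ in general I would argue by contradiction, assuming $d\le0$. Let $\mathcal{E}'\subseteq E$ be the image of the evaluation map $V\otimes\mathcal{O}\to E$ and let $F\subseteq E$ be its saturation, which is a subbundle with $V\subseteq H^0(F)$; as a quotient of a trivial bundle, $\mathcal{E}'$ is globally generated, so $\deg F\ge\deg\mathcal{E}'\ge0$. If $F\ne E$, then $(F,V)$ is a proper coherent subsystem, and using $\deg F\ge0$, $\mathrm{rk}(F)<n$ and $d\le0$ one gets $\mu_\alpha(F,V)\ge\alpha k/\mathrm{rk}(F)>\alpha k/n\ge\mu_\alpha(E,V)$, contradicting $\alpha$-stability. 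If instead $F=E$, then $\mathcal{E}'\subseteq E$ has torsion quotient, so $0\le\deg\mathcal{E}'\le d\le0$ forces $d=0$ and $\mathcal{E}'=E$; hence $E$ is globally generated of degree $0$, so $E\cong\mathcal{O}^n$ and $k\le h^0(E)=n$. But with $d=0$ the inequality of the previous paragraph reads $\alpha(n-k)<0$, i.e.\ $k>n$ --- a contradiction. Either way a contradiction results, so $d>0$.

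I expect the only genuinely delicate point to be the case $F=E$: there one must invoke the standard fact that a vector bundle which is globally generated of degree $0$ is trivial, after which the inequality $\alpha(n-k)<d$ with $d=0$ produces the contradiction; everything else reduces to direct comparisons of $\alpha$-slopes. The auxiliary facts used are standard and worth recording: a line bundle carrying a nonzero section has non-negative degree, a quotient of a trivial bundle is globally generated (hence of non-negative degree), and a globally generated bundle of degree $0$ on $C$ is isomorphic to $\mathcal{O}^n$.
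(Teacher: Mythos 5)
Your proof is correct and follows essentially the same route as the paper's: the same three test subsystems $(E,0)$, $(F,V)$ with $F$ the saturation of the image of the evaluation map, and $(L,W)$ with $L$ the line subbundle generated by a single section. The only (immaterial) difference is in the degenerate generated case with $d=0$, where the paper observes directly that $(\mathcal{O}^n,H^0(\mathcal{O}^n))$ is not $\alpha$-stable for $n\ge2$, while you instead derive $k\le n$ and contradict the already-established inequality $\alpha(n-k)<d$.
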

\begin{proof}Let $(E,V)\in G(\alpha;n,d,k)$ with $n\ge2$, $k\ge1$. As already remarked, $(E,0)$ contradicts $\alpha$-stability for $\alpha
\leq 0$. If $(E,V)$ is generically generated, then certainly $d\ge0$ with equality if and only if $(E,V)\cong({\mathcal O}^n,H^0({\mathcal O}^n))$, which is not $\alpha$-stable for $n\ge2$. Otherwise, the subsystem $(F,V)$, where $F$ is the subbundle of $E$ generically generated by $V$, contradicts stability for $d\leq 0$.
Finally, let $W$ be a $1$-dimensional subspace of $V$ and let $L$ be the line subbundle of $E$ generically generated by $W$. Then $(L,W)$ contradicts $\alpha$-stability if $k<n$ and
$\alpha (n-k)\geq d$.\end{proof}

The range of permissible values of $\alpha$ is divided up by a finite set of critical values
\begin{equation*}0=\alpha_0 <\alpha_1 <\ldots \alpha_L
<\left\{\begin{array}{l@{\quad\quad}l@{\quad\quad}l}\frac{d}{n-k}
&\text{where $k<n$}\\
\infty &\text{where $k\geq n$,}
\end{array}\right.\end{equation*}
and $\alpha$-stability of $(E,V)$
cannot change within an interval $(\alpha_i ,\alpha_{i+1})$ (see \cite{BGMN}).
For $\alpha \in (\alpha_i ,\alpha_{i+1})$, we write 
$G_i(n,d,k):=G(\alpha;n,d,k)$. For $k\ge n$ and $\alpha > \alpha_L $ we write
$G_L(n,d,k):=G(\alpha ;n,d,k)$, with a similar definition for $k<n$ and $\alpha_L<\alpha<\frac{d}{n-k}$. The semistable moduli spaces $\widetilde{G}_i(n,d,k)$ are defined similarly.

\begin{lemma}\label{lem2}We have a morphism $\widetilde{G}_0(n,d,k)\rightarrow
\widetilde{B}(n,d,k)$, whose image contains $B(n,d,k)$. In fact, for any coherent system $(E,V)$,
\begin{itemize}
\item $E\mbox{ stable }\Rightarrow(E,V)\in G_0(n,d,k)$
\item $(E,V)\in G_0(n,d,k)\Rightarrow E\mbox{ semistable}$.
\end{itemize}\end{lemma}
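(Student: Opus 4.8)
The plan is to establish the two displayed implications first --- they are elementary ``gap'' estimates valid for all sufficiently small $\alpha>0$ --- and then to assemble them into the stated morphism using the GIT construction of the moduli spaces.

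For the implication that a stable $E$ yields $(E,V)\in G_0(n,d,k)$, let $(F,W)$ be a proper coherent subsystem, so $1\le\mbox{rk}\,F\le n$. If $\mbox{rk}\,F=n$ then $F=E$ and $W$ is a proper subspace of $V$, so $\mu_\alpha(F,W)=\frac dn+\alpha\frac{\dim W}n<\mu_\alpha(E,V)$ for every $\alpha>0$. If $1\le\mbox{rk}\,F\le n-1$, stability of $E$ gives $\mu(E)-\mu(F)\ge\frac1{n(n-1)}$, while $0\le\frac{\dim W}{\mbox{rk}\,F}\le k$ (as $\mbox{rk}\,F\ge1$) and $\frac kn\ge0$; hence $\mu_\alpha(E,V)-\mu_\alpha(F,W)\ge\frac1{n(n-1)}-\alpha k>0$ as soon as $\alpha<\frac1{n(n-1)k}$. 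Thus $(E,V)$ is $\alpha$-stable for all small $\alpha>0$, and since $\alpha$-stability does not change on $(0,\alpha_1)$, this places $(E,V)$ in $G_0(n,d,k)$. For the converse, suppose $(E,V)\in G_0(n,d,k)$ but $E$ is not semistable, and pick a proper subbundle $F$ of $E$ with $\mu(F)>\mu(E)$, so $\mu(F)-\mu(E)\ge\frac1{n(n-1)}$; setting $W:=V\cap H^0(F)$ we obtain $\mu_\alpha(F,W)-\mu_\alpha(E,V)=(\mu(F)-\mu(E))+\alpha\bigl(\frac{\dim W}{\mbox{rk}\,F}-\frac kn\bigr)\ge\frac1{n(n-1)}-\alpha\frac kn>0$ for $\alpha<\frac1{n(n-1)k}$, contradicting $\alpha$-stability of $(E,V)$ at such an $\alpha\in(0,\alpha_1)$. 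Hence $E$ is semistable; the same estimate with $\le$ in place of $<$ shows that $\alpha$-semistability of $(E,V)$ for small $\alpha$ likewise forces $E$ semistable.

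I would then build the morphism from the forgetful map $(E,V)\mapsto E$. In the GIT construction $\widetilde G_0(n,d,k)=R^{ss}/\!\!/G$ (see \cite{BGMN,KN}) the parameter scheme $R$ carries a tautological family $(\mathcal E,\mathcal V)$ of coherent systems; restricting $\mathcal E$ to the semistable locus $R^{ss}$ gives a family of bundles which, by the implication just proved, is fibrewise semistable, and hence defines a classifying morphism $R^{ss}\to\widetilde M(n,d)$. This morphism is $G$-invariant, since points of a single $G$-orbit carry isomorphic --- a fortiori $S$-equivalent --- bundles, so it factors through the categorical quotient $R^{ss}/\!\!/G$, yielding a morphism $\widetilde G_0(n,d,k)\to\widetilde M(n,d)$ that sends $[(E,V)]$ to the $S$-equivalence class of $E$. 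Applying the long exact cohomology sequence to a Jordan--H\"older filtration of the semistable bundle $E$ gives $h^0(\mbox{gr}\,E)\ge h^0(E)\ge\dim V=k$, so the image lies in $\widetilde B(n,d,k)$. Finally, if $E\in B(n,d,k)$, choose any $k$-dimensional $V\subseteq H^0(E)$; by the first implication $(E,V)\in G_0(n,d,k)\subset\widetilde G_0(n,d,k)$, and it maps to the point of $B(n,d,k)\subset\widetilde B(n,d,k)$ determined by the stable bundle $E$. Hence the image contains $B(n,d,k)$.

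The main obstacle, I expect, is not the two pointwise implications, which are routine, but the verification that the forgetful assignment is a genuine morphism of schemes compatible with $S$-equivalence; this is handled most cleanly through the GIT picture and the universal property of the good quotient $R^{ss}/\!\!/G$, rather than by any purely pointwise argument. A more hands-on alternative is to prove, by one further gap estimate, that for $\alpha\in(0,\alpha_1)$ every term $(E_i,V_i)$ of a Jordan--H\"older filtration of $(E,V)$ has underlying bundle of slope exactly $\mu(E)$, with each $E_i/E_{i-1}$ semistable; then $\mbox{gr}(E,V)$ has bundle part $S$-equivalent to $E$, so $[(E,V)]\mapsto[E]$ is well defined as a map of sets from the outset, and only the (standard) fact that it is algebraic remains to be checked.
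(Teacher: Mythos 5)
Your proposal is correct: the two pointwise implications via the slope-gap estimate $|\mu(F)-\mu(E)|\ge\frac{1}{n(n-1)}$ for small $\alpha$, and the factorization of the forgetful map through the good quotient, are exactly the standard argument that the paper compresses into the single line ``this follows easily from the definition of $\alpha$-stability.'' There is no divergence of method to report, and the additional care you take over $S$-equivalence and the containment of the image in $\widetilde{B}(n,d,k)$ is sound.
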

\begin{proof}This follows easily from the definition of $\alpha$-stability.\end{proof}

\begin{definition}\begin{em}The {\em Brill-Noether number} $\beta(n,d,k)$ is defined by
$$\beta(n,d,k):=n^2(g-1)+1-k(k-d+n(g-1)).$$
\end{em}\end{definition}
This is completely analogous to the classical case ($n=1$) and is significant for two reasons. In the first place, we have

\begin{lemma}\label{lem3} {\em (\cite[Corollaire 3.14]{He}; \cite[Corollary 3.6]{BGMN})} Every irreducible component of $G(\alpha;n,d,k)$ has dimension $\ge\beta(n,d,k)$.
\end{lemma}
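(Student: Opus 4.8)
The plan is to run the standard deformation-theoretic argument of \cite{He} and \cite{BGMN}. Fix an arbitrary point $(E,V)\in G(\alpha;n,d,k)$; it is enough to show $\dim_{(E,V)}G(\alpha;n,d,k)\ge\beta(n,d,k)$, since then every irreducible component through $(E,V)$ has dimension $\ge\beta(n,d,k)$. To $(E,V)$ one attaches a deformation complex $C^\bullet(E,V)$ whose hypercohomology groups $\mathbb{H}^0,\mathbb{H}^1,\mathbb{H}^2$ fit into an exact sequence
\begin{gather*}
0\to\mathbb{H}^0\to\mathrm{Hom}(V,V)\oplus H^0(\mathcal{E}nd E)\xrightarrow{\,a\,}\mathrm{Hom}(V,H^0(E))\to\mathbb{H}^1\to\\
H^1(\mathcal{E}nd E)\xrightarrow{\,b\,}\mathrm{Hom}(V,H^1(E))\to\mathbb{H}^2\to0,
\end{gather*}
where $a(\eta,\psi)(v)=\psi(v)-\eta(v)$ (with $\psi(v)\in H^0(E)$ the image of the section $v$ under $\psi\colon E\to E$ and $\eta(v)\in V\subseteq H^0(E)$) and $b$ is induced by the cup-product pairing $H^1(\mathcal{E}nd E)\otimes H^0(E)\to H^1(E)$. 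The facts I would quote from those references are: $\mathbb{H}^0\cong\mathrm{End}(E,V)$, the endomorphisms of the coherent system; $\mathbb{H}^1$ is the Zariski tangent space to $G(\alpha;n,d,k)$ at $(E,V)$; and $\mathbb{H}^2$ is an obstruction space, so that the completed local ring of $G(\alpha;n,d,k)$ at $(E,V)$ is a quotient of a power series ring in $\dim\mathbb{H}^1$ variables by at most $\dim\mathbb{H}^2$ relations. Since $(E,V)$ is $\alpha$-stable it is simple, so $\mathbb{H}^0=\mathbb{C}$; in any case $\dim\mathbb{H}^0\ge1$ because of the scalar endomorphisms.

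By Krull's principal ideal theorem the description of the local ring gives $\dim_{(E,V)}G(\alpha;n,d,k)\ge\dim\mathbb{H}^1-\dim\mathbb{H}^2$. From the exact sequence,
\begin{equation*}
\dim\mathbb{H}^0-\dim\mathbb{H}^1+\dim\mathbb{H}^2=\chi(\mathcal{E}nd E)+k^2-k\,\chi(E),
\end{equation*}
and Riemann--Roch gives $\chi(\mathcal{E}nd E)=n^2(1-g)$ and $\chi(E)=d-n(g-1)$, so the right-hand side simplifies to $1-\beta(n,d,k)$. Hence
\begin{equation*}
\dim_{(E,V)}G(\alpha;n,d,k)\ \ge\ \dim\mathbb{H}^1-\dim\mathbb{H}^2\ =\ \dim\mathbb{H}^0-1+\beta(n,d,k)\ \ge\ \beta(n,d,k).
\end{equation*}
As $(E,V)$ was arbitrary, every irreducible component of $G(\alpha;n,d,k)$ has dimension $\ge\beta(n,d,k)$; moreover, since $\dim\mathbb{H}^0=1$, the bound is attained and $G(\alpha;n,d,k)$ is smooth of dimension $\beta(n,d,k)$ at $(E,V)$ whenever $\mathbb{H}^2=0$.

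The real content, which I would import rather than reprove, is the construction of $C^\bullet(E,V)$ and the identification of $\mathbb{H}^1$ and $\mathbb{H}^2$ with the tangent and obstruction spaces: the delicate point is to track correctly how infinitesimal deformations of $E$, infinitesimal motions of $V$ inside $H^0(E)$, reparametrisations of $V$, and automorphisms of $E$ combine. Everything afterwards is bookkeeping with Riemann--Roch and the dimension bound for a scheme defined by finitely many equations. A more hands-on alternative would be to realise $G(\alpha;n,d,k)$ locally as a determinantal subscheme of a smooth parameter space for the bundles $E$ --- the locus where an auxiliary map of vector bundles drops rank --- and to read off the inequality from the expected codimension of a determinantal variety; making this precise requires arranging the global generation and $H^1$-vanishing needed to build the parameter space, but the same Riemann--Roch count again produces the expected dimension $\beta(n,d,k)$.
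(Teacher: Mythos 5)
Your argument is correct and is precisely the deformation-theoretic proof from the sources the paper cites for this lemma ([He, Corollaire 3.14] and [BGMN, Corollary 3.6]); the paper itself offers no proof beyond the citation. The hypercohomology long exact sequence, the Euler-characteristic computation giving $\dim\mathbb{H}^0-\dim\mathbb{H}^1+\dim\mathbb{H}^2=1-\beta(n,d,k)$, and the Krull-type bound $\dim_{(E,V)}G\ge\dim\mathbb{H}^1-\dim\mathbb{H}^2$ all check out, with the genuinely nontrivial inputs (identification of $\mathbb{H}^1$ and $\mathbb{H}^2$ as tangent and obstruction spaces, and simplicity of $\alpha$-stable systems) correctly flagged as imported from the references.
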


The second reason concerns the local structure of the moduli spaces. 

\begin{definition}\begin{em}Let $(E,V)$ be a coherent system. The {\em Petri map} of $(E,V)$ is the linear map
$$V\otimes H^0(E^*\otimes K)\longrightarrow H^0(\mbox{End}E\otimes K)$$
given by multiplication of sections.\end{em}\end{definition}

\begin{lemma}\label{lem4} {\em(\cite[Proposition 3.10]{BGMN})} Let $(E,V)\in G(\alpha;n,d,k)$. The following conditions are equivalent:
\begin{itemize}
\item $G(\alpha;n,d,k)$ is smooth of dimension $\beta(n,d,k)$ at $(E,V)$
\item the Petri map of $(E,V)$ is injective.
\end{itemize}\end{lemma}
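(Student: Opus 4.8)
The plan is to compute the Zariski tangent space and the obstruction space of $G(\alpha;n,d,k)$ at $(E,V)$ through the deformation theory of coherent systems, and then to recognise the obstruction space, via Serre duality, as the dual of the kernel of the Petri map. I would use the standard fact (\cite{BGMN}, building on \cite{He}) that there are finite-dimensional vector spaces $\mathbb{H}^0,\mathbb{H}^1,\mathbb{H}^2$ --- the hypercohomology of the deformation complex of $(E,V)$ --- such that $\mathbb{H}^0=\mbox{Hom}((E,V),(E,V))$, $\mathbb{H}^1$ is the Zariski tangent space to $G(\alpha;n,d,k)$ at $(E,V)$, all obstructions to deforming $(E,V)$ lie in $\mathbb{H}^2$, and there is an exact sequence
$$0\to\mathbb{H}^0\to H^0(\mbox{End}E)\xrightarrow{a}\mbox{Hom}(V,H^0(E)/V)\to\mathbb{H}^1\to H^1(\mbox{End}E)\xrightarrow{b}\mbox{Hom}(V,H^1(E))\to\mathbb{H}^2\to 0,$$
where $a(\phi)$ sends $v\in V$ to $\phi(v)\bmod V$ and $b$ is cup product against the sections in $V$, induced by the pairing $H^1(\mbox{End}E)\otimes H^0(E)\to H^1(E)$. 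Since $(E,V)$ is $\alpha$-stable it is simple, so $\mathbb{H}^0=\mathbb{C}$.

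Next I would take the alternating sum of dimensions along this exact sequence. Using $\chi(\mbox{End}E)=n^2(1-g)$, $h^0(E)-h^1(E)=\chi(E)=d+n(1-g)$, $\dim\mbox{Hom}(V,H^0(E)/V)=k(h^0(E)-k)$ and $\dim\mbox{Hom}(V,H^1(E))=k\,h^1(E)$, a short computation (in which the terms involving $h^0(E)$ cancel) gives $\dim\mathbb{H}^0-\dim\mathbb{H}^1+\dim\mathbb{H}^2=1-\beta(n,d,k)$, hence, since $\dim\mathbb{H}^0=1$,
$$\dim\mathbb{H}^1=\beta(n,d,k)+\dim\mathbb{H}^2.$$
Combining this with Lemma~\ref{lem3} --- every component of $G(\alpha;n,d,k)$ through $(E,V)$ has dimension $\ge\beta(n,d,k)$, so that $\beta(n,d,k)\le\dim_{(E,V)}G(\alpha;n,d,k)\le\dim\mathbb{H}^1$ --- one concludes that $G(\alpha;n,d,k)$ is smooth of dimension $\beta(n,d,k)$ at $(E,V)$ if and only if $\dim\mathbb{H}^1=\beta(n,d,k)$, equivalently $\mathbb{H}^2=0$; for the reverse implication one also uses that vanishing of the obstruction space $\mathbb{H}^2$ forces $(E,V)$ to be an unobstructed, hence smooth, point.

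It then remains to identify $\mathbb{H}^2$. From the exact sequence, $\mathbb{H}^2$ is the cokernel of $b\colon H^1(\mbox{End}E)\to\mbox{Hom}(V,H^1(E))$. Dualising by Serre duality on $C$ and using $(\mbox{End}E)^{*}\cong\mbox{End}E$, one gets $H^1(\mbox{End}E)^{*}\cong H^0(\mbox{End}E\otimes K)$ and $\mbox{Hom}(V,H^1(E))^{*}\cong V\otimes H^1(E)^{*}\cong V\otimes H^0(E^{*}\otimes K)$; and since Serre duality carries cup product into multiplication of sections, the transpose of $b$ is precisely the Petri map $V\otimes H^0(E^{*}\otimes K)\to H^0(\mbox{End}E\otimes K)$ of $(E,V)$. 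Hence $\mathbb{H}^2$ is the dual of the kernel of the Petri map, so $\mathbb{H}^2=0$ exactly when the Petri map is injective, and with the previous step this proves the lemma. The real content --- which I treat as known here --- is the construction of this deformation and obstruction theory for coherent systems (the deformation complex, the identification of $\mathbb{H}^1$ and $\mathbb{H}^2$ with tangent and obstruction spaces, and the exact sequence above) together with the verification that the transpose of $b$ really is the Petri map; in a self-contained treatment essentially all the effort goes there.
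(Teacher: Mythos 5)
Your argument is correct and is essentially the proof of \cite[Proposition 3.10]{BGMN}, which is the source the survey cites for this lemma without reproducing a proof: the long exact sequence of hypercohomology of the deformation complex, the Euler characteristic computation giving $\dim\mathbb{H}^1=\beta(n,d,k)+\dim\mathbb{H}^2$ (using $\mathbb{H}^0=\mathbb{C}$ by simplicity), and the Serre-duality identification of $\mathbb{H}^2$ with the dual of the kernel of the Petri map. You are also right that the substantive content lies in the construction of that deformation theory, which is exactly what \cite{He} and \cite[Section 3]{BGMN} supply.
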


\begin{definition}\begin{em}The curve $C$ is called a {\em Petri curve} if the Petri map of $(L,H^0(L))$ is injective for every line bundle $L$.\end{em}\end{definition}

It is a standard result of classical Brill-Noether theory that the general curve of any genus is a Petri curve \cite{Gie,Laz}. 

\section{The Existence Problem}\label{sec2}

Among the possible questions one can ask about the existence of $\alpha$-stable coherent systems are the following.
\begin{itemize}
\item[(i)] Given $n$, $d$, $k$, $\alpha$ satisfying
(\ref{conditions1}), is $G(\alpha ;n,d,k)\neq \emptyset $?
\item[(ii)] Given $n$, $d$, $k$, does there exist $(E,V)$ which is
$\alpha$-stable for all $\alpha $ satisfying (\ref{conditions1})?
\item[(iii)] Given $n$, $d$, $k$, does there exist $(E,V)$ with $E$ stable
which is $\alpha$-stable for all $\alpha$ satisfying
(\ref{conditions1})?
\end{itemize}
We can restate (ii) and (iii) in a nice way by defining
$$U(n,d,k):=\{(E,V)\in G_L(n,d,k)\mid E \mbox{ is stable}\}$$
and
$$U^s(n,d,k):=\{(E,V)\mid (E,V) \mbox{ is $\alpha$-stable for } \alpha>0, \alpha(n-k)<d\}.$$
Then (ii) and (iii) become
\begin{itemize}
\item[(ii)$'$] Given $n$, $d$, $k$ with $d>0$, is $U^s(n,d,k)\ne\emptyset$?
\item[(iii)$'$] Given $n$, $d$, $k$ with $d>0$, is $U(n,d,k)\ne\emptyset$?
\end{itemize}

The ``obvious'' conjecture is that the answers are affirmative if and only if $\beta(n,d,k)\ge0$. However, this is false in both directions; see, for example, Remarks \ref{rk1} and \ref{rk5}. A basic result, for which a complete proof has only been given very recently, is

\begin{theorem}\label{th1}For any fixed $n$, $k$,  $U(n,d,k)\neq \emptyset $ for all
sufficiently large $d$.
\end{theorem}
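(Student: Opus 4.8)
The plan is to construct, for each fixed $n$ and $k$, a family of coherent systems $(E,V)$ of type $(n,d,k)$ with $E$ stable and $(E,V)$ lying in $G_L(n,d,k)$, for all $d$ large. The natural building block is a line bundle $L$ of large degree with $h^0(L)\ge k$: choosing $L$ general of degree $d'$ with $d'-g\ge k-1$ (e.g. $d'$ such that $h^0(L)=d'+1-g$), pick a $k$-dimensional subspace $W\subseteq H^0(L)$ that is base-point free and generates $L$. Then form $E$ as an extension
\begin{equation*}
0\longrightarrow L\longrightarrow E\longrightarrow F\longrightarrow 0,
\end{equation*}
where $F$ is a general stable bundle of rank $n-1$ and suitable degree, chosen so that $H^0(F)=0$ (possible once $\deg F<0$, keeping the slope of $F$ slightly below that of $L$ so that stability has a chance), and so that $V:=W$ maps isomorphically to its image in $H^0(E)$. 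The point of taking $H^0(F)=0$ is that then $H^0(E)=V$ has dimension exactly $k$, which simplifies the stability analysis enormously.

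The key steps, in order, are: (1) choose the numerical data — fix $\deg L=d'$ and $\deg F=d-d'$ with $d'$ chosen in a fixed residue range so that $\mu(L)>\mu(F)$ by a controlled amount and $H^0(F)=0$; as $d\to\infty$ this is always arrangeable. (2) Show that a general extension $E$ is stable: any destabilising subbundle $E'\subset E$ either lies in $L$ (impossible, as $\mu(L)>\mu(E)$ is arranged to fail... rather, $L$ itself does not destabilise because $\mu(L)$ can be kept $<\mu(E)$ by choosing $d'$ not too large relative to $d/n$) or projects nontrivially to $F$, and then stability of $F$ together with genericity of the extension class rules it out — this is the standard Hirschowitz-type argument that general extensions of stable bundles by a line bundle are stable provided the extension space is large enough, which it is for $d\gg0$. (3) Verify $\alpha$-stability of $(E,V)$ for all $\alpha$ with $0<\alpha(n-k)<d$: since $H^0(E)=V$, every coherent subsystem $(E',W')$ has $W'=V\cap H^0(E')$, and because $E$ is stable, $\mu(E')<\mu(E)$; combining this with the trivial bound $\frac{\dim W'}{\operatorname{rk}E'}\le\frac{k}{1}$ one checks $\mu_\alpha(E',W')<\mu_\alpha(E,V)$, using that $d$ is large enough to dominate the $\alpha$-correction term. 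This places $(E,V)$ in $G(\alpha;n,d,k)$ for \emph{every} admissible $\alpha$, in particular in $G_L(n,d,k)$, so $U(n,d,k)\ne\emptyset$.

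The main obstacle is step (3): one must control the competition between the slope deficit $\mu(E)-\mu(E')$ coming from stability of $E$ and the term $\alpha\bigl(\frac{k}{n}-\frac{\dim W'}{\operatorname{rk}E'}\bigr)$, which can be negative when $W'$ is relatively large inside a small-rank subbundle — precisely the situation exploited in Lemma~\ref{lem1}. One has to check that for $d$ large the slope deficit, which is at worst of order $1/(n(n-1))$ but is a genuine positive rational, outweighs $\alpha\cdot\frac{k}{\operatorname{rk}E'}$ uniformly over $\alpha<\frac{d}{n-k}$; this forces a careful choice of $\deg L$ relative to $d$ (the line subbundle $L$ supporting all $k$ sections is itself the worst potential destabiliser, exactly as in the proof of Lemma~\ref{lem1}, so one needs $\alpha(n-k)<d$ to be respected with room to spare, i.e. $\mu_\alpha(L,V)<\mu_\alpha(E,V)$, which reads $\deg L+\alpha k<\frac{d}{n}+\frac{\alpha k}{n}$, automatically true for $\deg L$ bounded and $d\to\infty$). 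Granting the right bookkeeping, the construction goes through; the subtlety is purely in making the inequalities uniform in $\alpha$, and this is why the theorem, though expected, required care to prove in full.
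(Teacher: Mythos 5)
Your construction cannot produce points of $U(n,d,k)$, and the failure is structural rather than a matter of bookkeeping. If all of $V$ comes from a line subbundle $L$, then $(L,V)$ is a coherent subsystem with $\frac{\dim V}{\mbox{\scriptsize rk}\,L}=k$, whereas $\mu_\alpha(E,V)$ has $\alpha$-coefficient $\frac{k}{n}<k$. The inequality $\deg L+\alpha k<\frac{d}{n}+\alpha\frac{k}{n}$ therefore fails for every $\alpha\ge\frac{d-n\deg L}{k(n-1)}$, so $(E,V)$ is never $\alpha$-stable for large $\alpha$ and never lies in $G_L(n,d,k)$. Your claim that the inequality is ``automatically true for $\deg L$ bounded and $d\to\infty$'' overlooks that the admissible range of $\alpha$ grows with $d$: it is all of $(0,\infty)$ when $k\ge n$, and $(0,\frac{d}{n-k})$ when $k<n$; in the latter case, at $\alpha$ near $\frac{d}{n-k}$ the required inequality becomes $\deg L<\frac{(1-k)d}{n-k}\le0$, impossible since $L$ must carry $k\ge1$ sections, so $\deg L>0$. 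No choice of $\deg L$ relative to $d$ repairs step (3): concentrating all $k$ sections in a rank-one subsheaf maximises the section-to-rank ratio of a subsystem, which is precisely what $\alpha$-stability for large $\alpha$ forbids. (Steps (1) and (2) are fine; it is only the passage from stability of $E$ to $\alpha$-stability of $(E,V)$ that collapses.)

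The paper itself disposes of this theorem by citation, but the cited constructions avoid exactly this trap. For $k\le n$ (\cite{BG2,BGMMN1}, cf.\ Theorem \ref{th3}) one uses extensions $0\to\mathcal{O}^k\to E\to F\to0$ with $V=H^0(\mathcal{O}^k)$: the dangerous subsystem is then $(\mathcal{O}^k,V)$ with $\mu_\alpha(\mathcal{O}^k,V)=\alpha$, and $\alpha<\frac{d}{n}+\alpha\frac{k}{n}$ is equivalent to $\alpha(n-k)<d$, i.e.\ it holds throughout the admissible range. For $k>n$ (\cite{Ba2,Te3}) one must produce $(E,V)$ with the sections genuinely spread over $E$, so that no proper subsystem has section-to-rank ratio exceeding $\frac{k}{n}$; this is done by degeneration and related methods, and cannot be achieved by any extension $0\to L\to E\to F\to0$ with $V\subseteq H^0(L)$. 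If you want a direct construction in the spirit of your proposal, the right starting point is to distribute the sections among $n$ line subbundles (as in the proofs of Theorem \ref{th14}(i) and Theorem \ref{th17}), not to stack them in one.
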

\begin{proof}For $k\leq n$, see \cite{BG2,BGMMN1}. For $k>n$, see \cite{Ba2} and \cite{Te3}.
\end{proof}

Now write 
$$d_0=d_0(n,k):=\min \{ d:G(\alpha ;n,d,k)\neq \emptyset 
\mbox{ for some $\alpha$ satisfying (\ref{conditions1}})\}.$$
The following would be an ``ideal'' result.
\begin{theor}For fixed $n$, $k$ with $n\geq 2$,
$k\geq 1$, 
\begin{itemize}\item[(a)] $G(\alpha ;n,d,k)\neq \emptyset $ if and
only if $\alpha>0$, $(n-k)\alpha<d$ and $d\ge d_0$ \item[(b)]
$B(n,d,k)\neq \emptyset$ if and only if $d \geq d_0$ \item[(c)] if
$d\geq d_0$, $U(n,d,k)\neq \emptyset$.
\end{itemize}
\end{theor}

Note that (c) is stronger than (a) and (b) combined.

One may also ask whether, for given $n$, $d$, there is an upper bound on $k$ for which $G(\alpha;n,d,k)\ne\emptyset$. In fact the existence of an upper bound on $h^0(E)$ for any $(E,V)\in G(\alpha;n,d,k)$ (possibly depending on $\alpha$) is used in the proof of existence of the moduli spaces (see, for example \cite{KN}). An explicit such bound is given in \cite[Lemmas 10.2 and 10.3]{BGMN}. The more restricted question of an upper bound on $k$ has now been solved by the following generalization of Clifford's Theorem.

\begin{theorem}\label{th2} {\em(\cite{LN5})}
Let $(E,V)$ be a coherent system of type $(n,d,k)$ with $k>0$ which is $\alpha$-semistable for some $\alpha>0$. Then
$$k\le\begin{cases}\frac{d}2+n&\text{if $0\le d\le 2gn$}\\d+n(1-g)&\text{if $d\ge2gn$}.\end{cases}$$
\end{theorem}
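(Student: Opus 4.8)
The plan is to establish the generalized Clifford bound by reducing to the rank-one case via the Jordan–Hölder filtration and then analyzing $\alpha$-stable coherent systems directly. First I would reduce to the case where $(E,V)$ is $\alpha$-stable: given an $\alpha$-semistable $(E,V)$ with $k>0$, pass to a Jordan–Hölder filtration and observe that all graded pieces have the same $\alpha$-slope; since $\alpha>0$ and the total $k>0$, at least one graded piece $(E_j,V_j)/(E_{j-1},V_{j-1})$ has positive $k$-component, and by additivity of $d$, $n$, $k$ over the filtration it suffices to prove the bound for each $\alpha$-stable piece with $k>0$ (the pieces with $k=0$ only help, since they contribute to $d$ and $n$ but not $k$, and one checks the claimed bound is "superadditive" in the relevant sense — this needs a short verification for the two regimes $d\le 2gn$ and $d\ge 2gn$, paying attention to where the break point sits). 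So assume $(E,V)$ is $\alpha$-stable with $k>0$.

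Next I would bring in the dual/evaluation construction. For a generically generated coherent system one has the standard exact sequence coming from the evaluation map $V\otimes\mathcal O\to E$; more useful here is to consider the \emph{dual span} construction: let $0\to E'\to V\otimes\mathcal O\to E_1\to 0$ where $E_1$ is the subsheaf of $E$ generically generated by $V$, so $E_1$ has rank $n_1\le n$, degree $d_1\le d$ and $k_1=\dim V_1\ge k$ sections with $V\subseteq H^0(E_1)$. One reduces to $(E_1,V)$ being generically generated and then, after modifying to a genuine bundle quotient, to a generated coherent system; $\alpha$-stability is used to control how $d_1,n_1$ compare with $d,n$. The heart of the matter is then a bound of the form $k\le d/2+n$ for generated (or generically generated) $\alpha$-semistable systems, which comes from dualizing: $E'=(E_1/\text{something})^*$ or rather $E^{\prime *}$ is again generated with $h^0(E^{\prime *})\ge k$, $\operatorname{rk}=k-n_1$, $\deg=d_1$, and one plays $(E,V)$ against its dual via Riemann–Roch and the $\alpha$-stability inequality applied to a well-chosen subsystem (e.g. one generated by part of $V$, or the kernel of evaluation).

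The key technical step — and the main obstacle — is getting the sharp constant and the correct break point $d=2gn$. The classical Clifford theorem ($n=1$) says $k\le d/2+1$ for $0\le d\le 2g$, with the bound $k\le d+1-g$ for $d\ge 2g$ coming simply from Riemann–Roch and $h^1\ge 0$ (vanishing of $h^1$ for large degree). The $d\ge 2gn$ case should similarly follow from $\alpha$-semistability forcing $\mu(E)\ge$ something, hence $h^1(E)=0$ by a Clifford-type or semistability argument (every quotient has slope $\ge 2g-1$ roughly), giving $k\le h^0(E)=d+n(1-g)$. The delicate regime is $0\le d\le 2gn$: here I would run a self-duality argument, applying the Clifford-type inequality to both $(E,V)$ and a coherent system built from $E^*\otimes K$ and the image of the Petri-type multiplication, or more simply induct on $n$ by choosing a saturated line subbundle or rank-$(n-1)$ quotient adapted to $V$ and using $\alpha$-stability to split the section count and the degree favorably; the bookkeeping to keep the constant exactly $n$ (not $n/2$ or $2n$) and to handle the boundary overlap at $d=2gn$ consistently is where the real work lies, and it is presumably why this is cited to the separate paper \cite{LN5} rather than proved inline.
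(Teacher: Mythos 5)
The paper offers no proof of Theorem \ref{th2} --- it is quoted verbatim from \cite{LN5} --- so the only question is whether your proposal itself constitutes a proof, and it does not: both of the steps you describe in any detail contain errors, and the central case is explicitly left open. First, the Jordan--H\"older reduction. Writing $B(n,d)=\max\left(\frac{d}{2}+n,\ d+n(1-g)\right)$ for the claimed bound, your reduction needs $B$ to be superadditive, i.e.\ $\sum_j B(n_j,d_j)\le B(n,d)$; but a maximum of linear functions is \emph{sub}additive, and strictly so here: for $g=2$ one has $B(1,6)+B(1,0)=5+1=6>5=B(2,6)$. So knowing the bound for each graded piece does not yield it for the sum; any repair must exploit the equality of the $\alpha$-slopes of the pieces in an essential way, which your sketch does not do. Second, the case $d\ge 2gn$. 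You claim that $\alpha$-semistability forces every quotient of $E$ to have slope roughly $\ge 2g-1$, hence $h^1(E)=0$, hence $k\le h^0(E)=d+n(1-g)$. This is false: $\alpha$-semistability of the pair does not make $E$ semistable. For example, $(E,V)=(K\oplus L,\,H^0(K)\oplus 0)$ with $L$ generic of degree $b\ge 2g+2$ is strictly $\alpha$-semistable for $\alpha=(b-2g+2)/g$, has $d=2g-2+b\ge 4g=2gn$, and has $h^1(E)=1$. (The theorem still holds there, but not by your mechanism; one must win back the $h^1$ term, e.g.\ by showing that $h^1(E)>0$ forces the Clifford-type bound $k\le\frac{d}{2}+n$, which implies $k\le d+n(1-g)$ precisely when $d\ge 2gn$.)

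That last point leads to the heart of the matter: the inequality $k\le\frac{d}{2}+n$ in the special range. You assemble plausible tools (the subsheaf generically generated by $V$, the dual span sequence, the fact that $h^0(D_V(E))\ge k$ when $(E,V)$ is generated with $h^0(E^*)=0$), but you never derive from them any inequality that bounds $k$, and you say yourself that ``the bookkeeping to keep the constant exactly $n$ \ldots is where the real work lies.'' That bookkeeping \emph{is} the theorem; a proposal that defers it has established nothing beyond the trivial estimate $k\le h^0(E)$.
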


\section{Methods}\label{sec3}

We now describe some of the methods which can be used for tackling the existence problem.
\begin{itemize}
\item Degeneration Methods. These have been used very successfully by M.~Teixidor i Bigas \cite{Te1,Te2,Te3} (see section \ref{sec8}).
\item Extensions $0\rightarrow E_1 \rightarrow E \rightarrow E_2
\rightarrow 0$. For $k<n$, a special case, used in \cite{BG2,BGMMN1}, is
\begin{equation*}0 \rightarrow \mathcal{O}^k \rightarrow
E \rightarrow E_2 \rightarrow 0.
\end{equation*}
The method  is not
so useful when $V\not\subseteq H^0(E_1)$ because there is a problem of lifting sections of $E_2$ to $E$.
A more promising approach is to use extensions of coherent systems
\begin{equation*}0\rightarrow (E_1 ,V_1 )\rightarrow (E,V)
\rightarrow (E_2,V_2) \rightarrow 0,
\end{equation*} which are classified by $\mbox{Ext}^1((E_2,V_2),(E_1,V_1))$.
This approach was pioneered in \cite{BGMN} and has been used successfully in several papers \cite{BGMMN2,BBN,LN3}.
\item Syzygies and projective embeddings. Given a generated coherent system
$(E,V)$ of type $(n,d,k)$ with $k>n$, one can construct a morphism $C\rightarrow \mbox{Grass}(k-n,k)$. One can also use the sections of
$\det E$ to get a morphism from $C$ to a projective space. The relationship between these morphisms can be used to settle existence problems (see section \ref{sec7} and \cite{GMN}).
\item Elementary transformations. Consider an exact sequence
\begin{equation*}0\rightarrow E \rightarrow E' \rightarrow T
\rightarrow 0,
\end{equation*}where $T$ is a torsion sheaf. If $(E,V)$ is a coherent system, then so is $(E',V)$. This method has been used to construct $\alpha$-stable coherent systems for low (even negative) values of $\beta(n,d,k)$ (see sections \ref{sec6}, \ref{sec7} and \ref{sec8}).
\item Flips. Flips were pioneered by M.~Thaddeus \cite{Th} and introduced in this context in \cite{BGMN}. They are given by extensions of coherent systems and can allow results for one value of $\alpha$ to be transmitted to another value.
\item Dual Span. This was originally introduced by D.~C.~Butler \cite{Bu} and has been used very successfully in the case $k=n+1$ (see section \ref{sec6}). The idea is as follows. Let $(E,V)$ be a coherent system. We define a new coherent system $D(E,V)=(D_V(E),V')$, where $D_V(E)^*$ is the kernel of the evaluation map $V\otimes{\mathcal O}\to E$ and $V'$ is the image of $V^*$ in $H^0(D_V(E))$. In the case where $(E,V)$ is generated and $H^0(E^*)=0$, we have dual exact sequences
$$0\to D_V(E)^*\to V\otimes {\mathcal O}\to E\to 0$$
and
$$0\to E^*\to V^*\otimes{\mathcal O}\to D_V(E)\to 0$$
and $D(D(E,V))=(E,V)$, so this is a true duality operation. The main point here is that the stability properties of $D(E,V)$ are closely related to those of $(E,V)$ \cite[section 5.4]{BGMN}.
\item Covering Spaces. Suppose that $f:Y\rightarrow C$ is a covering (maybe ramified). If $(E,V)$ is a coherent system on $Y$, then $(f_*E,V)$ is a coherent system on $C$. One can for example take $E$ to be a line bundle. The rank of $f_*E$ is then equal to the degree of the covering and $\deg E$ is easy to compute. The problem is to prove $\alpha$-stability. Preliminary work suggests that this may be interesting, but to get really good results one needs to take into account the fact that $Y$ is not a general curve and may possess line bundles with more than the expected number of independent sections.
\item Homological methods. In classical Brill-Noether theory, homological methods have been very successful in proving non-emptiness of Brill-Noether loci; essentially one views the loci as degeneration loci and uses the Porteous formula \cite[II (4.2) and VII (4.4)]{ACGH}. It is not trivial to generalize this to higher rank Brill-Noether loci because of non-compactness of the moduli space of stable bundles (when $\gcd(n,d)\ne1$) but more particularly because the cohomology is much more complicated. There is some unpublished work in a special case by Seongsuk Park \cite{Park} (see also section \ref{sec9}). Once one knows that $B(n,d,k)\ne\emptyset$, one gets also $G_0(n,d,k)\ne\emptyset$ by Lemma \ref{lem2}.
\item Gauge theory. This has been used for constructing the moduli spaces of coherent systems \cite{BDGW,BG1}, but not (so far as I know) for proving they are non-empty.
\end{itemize}

\section{$0<k\leq n$}\label{sec4}

In this case the existence problem is completely solved and there are some results on irreducibility and smoothness.

\begin{theorem}\label{th3}{\em (\cite[Theorem 3.3]{BGMMN1})} Suppose $0<k\leq n$. Then $G(\alpha ;n,d,k)\neq
\emptyset $ if and only if (\ref{conditions1}) holds and in
addition
\begin{equation}\label{conditions2}\textnormal{$k\leq
n+\frac{1}{g}(d-n),\ \ \ (n,d,k)\neq
(n,n,n)$.}\end{equation}
Moreover, if (\ref{conditions2}) holds and $d>0$, then $U(n,d,k)\neq
\emptyset$ and is smooth and irreducible.
\end{theorem}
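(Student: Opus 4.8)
The plan is to treat the two implications separately, establishing necessity of the numerical conditions first and then constructing the required family of coherent systems. For necessity: Lemma \ref{lem1} already gives \eqref{conditions1}, so we must produce the bound $k\le n+\frac1g(d-n)$ and rule out $(n,n,n)$. The inequality should follow by a slope argument applied to a suitably chosen subsystem. Given $(E,V)\in G(\alpha;n,d,k)$ with $0<k\le n$, the natural move is to consider the subsheaf generated (or generically generated) by $V$; using $\alpha$-semistability of $E$ (Lemma \ref{lem2}) together with a Clifford-type estimate for $h^0$ of a semistable bundle of slope $\le 2g$, one pushes the number of sections against the degree. Indeed Theorem \ref{th2} already gives $k\le d/2+n$ when $d\le 2gn$, but the sharper linear bound $k\le n+\frac1g(d-n)$ needs the finer structure of subsystems: I would look at the line subbundle $L$ generated by a one-dimensional $W\subseteq V$, iterate, and extract a bound on $d$ in terms of $k$ and $g$. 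The excluded case $(n,n,n)$ is the ``trivial'' system $(\mathcal O^n,H^0(\mathcal O^n))$ up to the analysis already sketched in the proof of Lemma \ref{lem1}: degree $n$, $n$ sections forces $E$ to be generated with $\chi$ forcing $E\cong\mathcal O^n$, which is not $\alpha$-stable.

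For sufficiency and the stronger statement that $U(n,d,k)\ne\emptyset$ is smooth and irreducible, I would use the extension method flagged in Section \ref{sec3}, specifically extensions of the form
\begin{equation*}
0\to\mathcal O^k\to E\to E_2\to 0,
\end{equation*}
so that $V=H^0(\mathcal O^k)\subseteq H^0(E)$ automatically, sidestepping the lifting problem. Here $E_2$ has rank $n-k$ and degree $d$; the point is that for $(n,d,k)$ satisfying \eqref{conditions2} with $d>0$ one can choose $E_2$ general in a suitable Brill-Noether-type locus (using classical results, or Theorem \ref{th1} in a limiting form) so that a general extension $E$ is stable and $(E,V)$ is $\alpha$-stable for all admissible $\alpha$. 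One verifies $\alpha$-stability by checking the slope inequality against the finitely many ``destabilizing'' subsystem types; the subsystems to worry about are those containing part of $\mathcal O^k$ and those mapping nontrivially to $E_2$, and genericity of the extension class kills the dangerous ones. Smoothness comes from Lemma \ref{lem4}: one shows the Petri map of the general such $(E,V)$ is injective — this reduces to a statement about $H^0$ and $H^1$ of $E$ and $E^*\otimes K$ that can be controlled for the general extension. Irreducibility follows because the total space of the relevant $\mathrm{Ext}$ bundle over the (irreducible) parameter space for $E_2$ is irreducible, and $U(n,d,k)$ is dominated by an open subset of it.

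The main obstacle is the verification of $\alpha$-stability uniformly in $\alpha$ over the whole admissible range $(n-k)\alpha<d$: one must show that no coherent subsystem $(F,W)$ violates $\mu_\alpha(F,W)<\mu_\alpha(E,V)$ for any such $\alpha$, and since the $\alpha$-slope is affine in $\alpha$ it suffices to check the endpoints, but the endpoint $\alpha\to\frac{d}{n-k}$ (when $k<n$) is delicate and forces careful control of subsystems $(F,W)$ with $W=V$. This is exactly where the precise choice of $E_2$ and the genericity of the extension must be exploited, and where condition \eqref{conditions2} is used in full strength rather than just to guarantee a nonempty parameter space. A secondary difficulty is arranging simultaneously that $E$ itself is stable (needed for membership in $U(n,d,k)$) and that the Petri map is injective; these are open conditions on the extension class, so the real work is showing each is nonempty, which I would do by an explicit dimension count comparing $\dim\mathrm{Ext}^1(E_2,\mathcal O^k)$ with the dimensions of the bad loci.
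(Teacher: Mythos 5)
Your sufficiency argument follows essentially the route the paper takes: the survey's indication of proof rests on the fact (from \cite[Theorems 5.4 and 5.6]{BGMN}) that every element of $G_L(n,d,k)$ sits in an extension $0\to\mathcal O^k\to E\to F\to 0$ with $F$ semistable (resp.\ $0\to\mathcal O^n\to E\to T\to 0$ with $T$ torsion when $k=n$), that $G_L$ is already known to be smooth and irreducible, and that the general such $E$ is stable by Mercat's Th\'eor\`eme A.5. Your proposed dimension count on the bad loci in $\mathrm{Ext}^1(E_2,\mathcal O^k)$ is exactly the content of that last ingredient, and your identification of the endpoint $\alpha\to\frac{d}{n-k}$, where the dangerous subsystems are those with $W=V$, is correct. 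Two caveats on this half: you do not treat $k=n$ (where the quotient is torsion, not a bundle, and where the exclusion of $(n,n,n)$ actually bites), and your smoothness argument via Petri injectivity of the \emph{general} element only yields generic smoothness, whereas the theorem asserts smoothness of all of $U(n,d,k)$; the paper gets this for free because $U$ is open in $G_L$, which is smooth at \emph{every} point.

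The genuine gap is in the necessity of $k\le n+\frac1g(d-n)$. Iterating line subbundles generated by one-dimensional subspaces of $V$, or invoking a Clifford-type bound, cannot produce this inequality: slope arguments against subsystems never see the genus in the required linear way, and as you note Theorem \ref{th2} only gives $k\le\frac d2+n$. The genus enters through cohomology of the quotient, not through slopes. Concretely, the argument (from \cite[Corollary 2.5 and Lemma 2.10]{BGMMN1}) is: first reduce to large $\alpha$, where the subsystem $(F',V)$ generated by $V$ forces $F'\cong\mathcal O^k$, yielding the canonical sequence $0\to\mathcal O^k\to E\to F\to 0$; then the $k$ extension classes in $H^1(F^*)$ must be linearly independent (otherwise a trivial subsystem of slope violating stability splits off), and since $h^0(F^*)=0$ Riemann--Roch gives $h^1(F^*)=d+(n-k)(g-1)$, so $k\le d+(n-k)(g-1)$, which rearranges to exactly $k\le n+\frac1g(d-n)$. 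Without this structural input your necessity direction does not close, and the same structure is what rules out $(n,n,n)$ (your claim that degree $n$ with $n$ sections forces $E\cong\mathcal O^n$ is false as stated: consider $\mathcal O^{n-1}\oplus L$ with $\deg L=n$).
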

\begin{proof}[Indication of proof]It is known \cite[Theorems 5.4 and 5.6]{BGMN} that
$G_L(n,d,k) \neq \emptyset $ under the stated conditions. Moreover $G_L(n,d,k)$ is smooth and irreducible and every
element has the form
\begin{align*}& 0\rightarrow \mathcal{O}^k \rightarrow E
\rightarrow F \rightarrow 0\ \  (k<n),\\& 0\rightarrow \mathcal{O}^n
\rightarrow E \rightarrow T \rightarrow 0\ \  (k=n),
\end{align*}with $F$ semistable, $T$ torsion. Necessity of (\ref{conditions2}) is proved in \cite[Corollary 2.5 and Lemma 2.10]{BGMMN1} and \cite[Remark 5.7]{BGMN}. In both cases, given (\ref{conditions2}) and $d>0$, the general $E$ of this form is stable; this is a consequence of \cite[Th\'eor\`eme A.5]{Mer2} (see also \cite{BGN,BMNO,Mer1}).\end{proof}
\begin{remark}\label{rk1}\begin{em} Conditions (\ref{conditions2}) are strictly stronger than $\beta(n,d,k)\ge0$.\end{em}\end{remark}
\begin{corollary}\label{cor1}Model Theorem holds with
$$d_0=\begin{cases}\max
\{ 1, n-g(n-k) \}
&(k<n)\\
n+1 &(k= n).\end{cases}$$\end{corollary}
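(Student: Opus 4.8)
The plan is to extract the value of $d_0$ from Theorem \ref{th3} and then deduce each of the three clauses of the Model Theorem from Theorem \ref{th3} together with Lemmas \ref{lem1} and \ref{lem2}. First I would rewrite the inequality $k\le n+\frac1g(d-n)$ in (\ref{conditions2}) in the equivalent form $d\ge n-g(n-k)$. When $k<n$ the condition $(n,d,k)\ne(n,n,n)$ in (\ref{conditions2}) is vacuous, so the conditions of Theorem \ref{th3} amount to $d>0$ together with $d\ge n-g(n-k)$, i.e.\ to $d\ge\max\{1,n-g(n-k)\}$; this identifies $d_0=\max\{1,n-g(n-k)\}$. When $k=n$ the inequality becomes simply $d\ge n$, and excluding the triple $(n,n,n)$ removes the single value $d=n$, leaving $d\ge n+1$, so $d_0=n+1$.

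Next I would check clause (a). If $\alpha>0$, $(n-k)\alpha<d$ and $d\ge d_0$, then by the previous paragraph (\ref{conditions2}) holds and $d\ge d_0\ge1$ gives $d>0$, so (\ref{conditions1}) holds; Theorem \ref{th3} then gives $G(\alpha;n,d,k)\ne\emptyset$. Conversely, if $G(\alpha;n,d,k)\ne\emptyset$ then Lemma \ref{lem1} supplies (\ref{conditions1}) and Theorem \ref{th3} supplies (\ref{conditions2}); combining $d>0$ with $d\ge n-g(n-k)$ when $k<n$, or $d\ge n$ with $d\ne n$ when $k=n$, yields $d\ge d_0$. This is exactly clause (a).

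For clause (c), if $d\ge d_0$ then (\ref{conditions2}) holds and $d>0$, so the last assertion of Theorem \ref{th3} gives $U(n,d,k)\ne\emptyset$. Clause (b) then follows in both directions: if $d\ge d_0$, pick $(E,V)\in U(n,d,k)$, so that $E$ is stable with $h^0(E)\ge\dim V=k$, whence $E\in B(n,d,k)$; conversely, if $B(n,d,k)\ne\emptyset$, choose $E\in M(n,d)$ with $h^0(E)\ge k$ and a $k$-dimensional subspace $V\subseteq H^0(E)$, so that $(E,V)\in G_0(n,d,k)$ by Lemma \ref{lem2}, and then clause (a) forces $d\ge d_0$.

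Since all the substance is already contained in Theorem \ref{th3}, I do not expect any genuine obstacle; the only delicate point is the boundary bookkeeping --- in particular the role of the excluded triple $(n,n,n)$ when $k=n$ and the sign of $n-g(n-k)$ when $k<n$ --- both of which are exactly what the $\max$ and the $+1$ in the stated formula for $d_0$ take care of.
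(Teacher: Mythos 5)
Your proposal is correct and is precisely the derivation the paper intends: the corollary is stated without proof as an immediate consequence of Theorem \ref{th3}, and your translation of (\ref{conditions2}) into $d\ge n-g(n-k)$ (with the excluded triple $(n,n,n)$ accounting for the $+1$ when $k=n$), together with the use of Lemma \ref{lem2} for clause (b), fills in exactly the routine bookkeeping that was left implicit. No gaps.
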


\section{$0<d\leq 2n$}\label{sec5}

In this case the existence problem is completely solved over any curve and all non-empty moduli spaces are irreducible.

\begin{theorem}\label{th4}{\em(\cite[Theorem 5.4]{BGMMN2})} Suppose $g\geq 3$ and $C$ is not hyperelliptic. If
$0<d\leq 2n$, then $G(\alpha ;n,d,k)\neq \emptyset $ if and only
if either (\ref{conditions1}) and (\ref{conditions2}) hold or
$(n,d,k)=(g-1,2g-2,g)$. Whenever it is non-empty, $G(\alpha ;n,d,k)$ is irreducible. Moreover $U(n,d,k)\ne\emptyset$ and is smooth.
\end{theorem}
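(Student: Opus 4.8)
The plan is to build on Theorem~\ref{th3}, which already disposes of the subrange $0<k\le n$ entirely --- the existence criterion, the non-emptiness and smoothness of $U(n,d,k)$, and the irreducibility --- so that all the work is in the range $k>n$. When $k>n$, condition (\ref{conditions1}) reduces to $\alpha>0$ (since $d\le 2n$ forces $\alpha(n-k)<0<d$), and condition (\ref{conditions2}) reduces to $k\le n+\frac{1}{g}(d-n)$; thus the statement to prove is that, for $0<d\le 2n$ and $k>n$, $G(\alpha;n,d,k)\ne\emptyset$ if and only if (\ref{conditions2}) holds or $(n,d,k)=(g-1,2g-2,g)$, and that in the non-empty cases $U(n,d,k)$ is non-empty and smooth and $G(\alpha;n,d,k)$ is irreducible.

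For necessity I would start with the Clifford-type bound of Theorem~\ref{th2}, which since $d\le 2n\le 2gn$ gives $k\le\frac{d}{2}+n$ for any $\alpha$-semistable $(E,V)$ of type $(n,d,k)$. To clear the residual band $n+\frac{1}{g}(d-n)<k\le\frac{d}{2}+n$, I would analyse such an $(E,V)$ via the coherent subsystem $(\overline F,V)$, where $\overline F\subseteq E$ is the subbundle generically generated by $V$; then $V\subseteq H^0(\overline F)$, $(\overline F,V)$ is generically generated of type $(\overline n,\overline d,k)$, and $\alpha$-semistability forces $\mu_\alpha(\overline F,V)\le\mu_\alpha(E,V)$, which together with $0<\overline d\le d\le 2n$ constrains $\overline n$ and $\overline d$ sharply. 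One then applies Clifford-type inequalities to the semistable subquotients of a filtration of $\overline F$ (or of $E$) chosen to carry the sections of $V$, and uses the hypothesis that $C$ is non-hyperelliptic to exclude the extremal line-bundle and canonical-type constituents; the only configuration compatible with $k>n+\frac{1}{g}(d-n)$ then turns out to be $E\cong D_{H^0(K)}(K)$, the dual span of the canonical system (of rank $g-1$ and degree $2g-2$), with $V=H^0(E)$ of dimension $g$ --- the last equality being Noether's theorem. This borderline higher-rank Clifford argument, in which non-hyperellipticity is essential, is the \emph{main obstacle}.

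For sufficiency and the assertions on $U(n,d,k)$ I would argue by construction. Given $n<k\le n+\frac{1}{g}(d-n)$, I would first treat the minimal admissible degree $d_*:=n+g(k-n)$ --- which satisfies $d_*\le d\le 2n$ exactly by the hypothesis --- and where $\beta(n,d_*,k)=(g-1)(k-n)^2+1>0$. Here I would produce $(E,V)$ of type $(n,d_*,k)$ as an extension of coherent systems, describe the elements of $G_L(n,d_*,k)$ explicitly enough to see that the generic one has $E$ stable and injective Petri map --- so that, by Lemma~\ref{lem4}, it is a smooth point of dimension $\beta$, consistent with Lemma~\ref{lem3} --- and deduce irreducibility and $U(n,d_*,k)\ne\emptyset$ from irreducibility of the parameter space; moreover, since $\mu_\alpha$ is affine in $\alpha$ and the defining inequalities hold both at $\alpha=0$ (as $E$ is stable) and for large $\alpha$, such an $(E,V)$ is $\alpha$-stable for every admissible $\alpha$. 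To pass from $d_*$ to an arbitrary $d$ with $d_*\le d\le 2n$ I would use elementary transformations $0\to E\to E'\to T\to 0$, checking that a generic one preserves stability of $E$, $\alpha$-stability of $(E',V)$ and injectivity of the Petri map. For the exceptional triple $(g-1,2g-2,g)$ one takes $E=D_{H^0(K)}(K)$ directly: it is stable because $C$ is non-hyperelliptic, $h^0(E)=g$, and the dual span exact sequences --- together with the (trivial) $\alpha$-stability of the line-bundle system $(K,H^0(K))$ --- show that $(E,H^0(E))$ is $\alpha$-stable for all $\alpha>0$; as $\beta(g-1,2g-2,g)=0$ and its Petri map is injective, $G(\alpha;g-1,2g-2,g)=U(g-1,2g-2,g)$ is a single reduced point.

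Finally, to upgrade irreducibility from the last chamber (and from $U$) to irreducibility of $G(\alpha;n,d,k)$ for \emph{every} admissible $\alpha$, I would run the flip analysis of \cite{BGMN}: crossing a critical value replaces $G_i$ by $G_{i+1}$ through modifications along flip loci described by $\mathrm{Ext}^1$ of coherent systems, and in the present small-slope regime one checks that these loci have dimension strictly below $\beta(n,d,k)$, so that irreducibility is inherited across all walls. Combining Theorem~\ref{th3} for $k\le n$, the Clifford-type exclusion, the construction for $n<k\le n+\frac{1}{g}(d-n)$ and the exceptional point then yields the theorem; the full details are in \cite{BGMMN2}.
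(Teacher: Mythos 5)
The core of this theorem is the necessity direction for $k>n$, and that is where your plan has a genuine gap. Theorem \ref{th2} only gives $k\le\frac{d}{2}+n$, and the entire content of the statement lives in the band $n+\frac{1}{g}(d-n)<k\le\frac{d}{2}+n$, which you propose to clear by ``Clifford-type inequalities on the semistable subquotients of a filtration.'' That tool cannot deliver the conclusion: Clifford-type bounds are additive over a filtration, so summing them merely reproduces $k\le\frac{d}{2}+n$ and never the sharp bound in (\ref{conditions2}). What is actually required is Mercat's much finer input --- the bound $h^0(E)\le n+\frac{1}{g}(d-n)$ for semistable $E$ of slope in $(0,2)$ \cite{Mer1} and the classification of semistable bundles of slope exactly $2$ with many sections \cite{Mer3}, which is where non-hyperellipticity enters and where $D(K,H^0(K))$ emerges as the unique exception --- followed by a delicate d\'evissage to pass from semistable bundles to $\alpha$-semistable coherent systems whose underlying bundle need not be semistable (one must control Harder--Narasimhan constituents of slope $\le 0$ or $>2$, for which the $\frac{1}{g}$-bound fails or degenerates). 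Your sketch asserts the outcome of this analysis (``the only configuration \ldots turns out to be $E\cong D_{H^0(K)}(K)$'') without an argument, and the tool you name does not supply one; this step is the theorem.

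On sufficiency for $k>n$ your route also differs from the paper's, and is underspecified at the decisive point. The paper does not construct coherent systems directly: it quotes non-emptiness of the Brill--Noether loci $B(n,d,k)$ from \cite{BGN,Mer1,Mer3} and then uses a lemma that, in this range of slopes, $B(n,d,k)\ne\emptyset\Rightarrow U(n,d,k)\ne\emptyset$, so one only has to choose $V\subseteq H^0(E)$ generically inside an already-known stable bundle. You instead propose to build $(E,V)$ at the minimal degree $d_*=n+g(k-n)$ ``as an extension of coherent systems'' and then apply elementary transformations; but you never say which extension, and producing a \emph{stable} $E$ of slope $\le2$ with $k>n$ independent sections is precisely the content of the cited Brill--Noether results, not a routine genericity statement. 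The surrounding scaffolding (reduction to $k>n$ via Theorem \ref{th3}, the computation $\beta(n,d_*,k)=(g-1)(k-n)^2+1$, the propagation of $\alpha$-stability from $\alpha=0^+$ and $\alpha\gg0$ to all $\alpha$, the treatment of $(g-1,2g-2,g)$ by the dual span of the canonical system, and the wall-crossing argument for irreducibility) is sound and consistent with \cite{BGMMN2}, but the two load-bearing steps above are not established by the methods you describe.
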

\begin{proof}[Indication of proof]The necessity of the
conditions for $0<d<2n$ follows from \cite{Mer1}. For $d=2n$, further calculations are necessary. For $k\le n$, the sufficiency of (\ref{conditions1}) and (\ref{conditions2}) has already been proved in Theorem \ref{th3}. For $k>n$, one requires the results of \cite{BGN,Mer1,Mer3} and a lemma stating that under these conditions $B(n,d,k)\ne\emptyset\Rightarrow U(n,d,k)\ne\emptyset$. For $(n,d,k)=(g-1,2g-2,g)$ we take $(E,V)=D(K,H^0(K))$ (see section \ref{sec6}).\end{proof}

\begin{remark}\label{rk2}\begin{em}For $C$ hyperelliptic, the theorem remains true for $0<d<2n$, but
some modification is needed for $d=2n$ \cite[Theorem 5.5]{BGMMN2}.\end{em}
\end{remark}

The reference \cite{BGMMN2} includes an example to show that these nice results do not extend beyond $d=2n$.

\begin{example}\label{ex}\begin{em}(\cite[section 7]{BGMMN2}) Let $(E,V)$ be a coherent system of type $(n,d,k)$ with
\begin{equation}\label{eqn11}n+\frac{1}{g}(d-n)<k<\frac{ng}{g-1}.
\end{equation}Then $(E,V)$ is not $\alpha$-semistable for large
$\alpha$. Moreover, if
$C$ is non-hyperelliptic and $3\leq r\leq g-1$, there exists $(E,V)$ of
type $(rg-r+1,2rg-2r+3,rg+1)$ with $E$ stable, and (\ref{eqn11}) holds in this case.\end{em}
\end{example}

\section{$k=n+1$}\label{sec6}

In this case the existence problem is almost completely solved when $C$ is a Petri curve, which we assume until further notice. Full details are contained in \cite{BBN}.

We use the dual span construction for $(L,V)$, where $L$ is a line bundle of degree $d>0$, $(L,V)$ is generated and $\dim V=n+1$.

\begin{lemma}\label{lem5}{\em (\cite[Corollary 5.10]{BGMN})} $D(L,V)$ is $\alpha$-stable for large $\alpha$.
\end{lemma}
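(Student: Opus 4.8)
The plan is to analyze the coherent system $D(L,V) = (D_V(L), V')$ directly from the defining exact sequences. Since $(L,V)$ is generated with $L$ a line bundle of positive degree, we have $H^0(L^*) = 0$, so the dual span construction applies and we obtain
$$0 \to E^* \to V^*\otimes{\mathcal O} \to D_V(L) \to 0,$$
where $E^* := D_V(L)^*$ is the kernel of the evaluation map $V\otimes{\mathcal O}\to L$. Writing $N := D_V(L)$, this bundle has rank $n$ and degree $d$ (by counting degrees in the sequence, since $\deg E^* = -d$). As $\alpha\to\infty$, the $\alpha$-stability of a coherent system $(N,V')$ of type $(n,d,n+1)$ is governed entirely by the subsystems $(F,W)$ with $W = V'\cap H^0(F)$ of maximal dimension relative to $\operatorname{rk} F$; concretely, $(N,V')\in G_L$ iff for every proper subbundle $F\subsetneq N$ one has $\frac{\dim(V'\cap H^0(F))}{\operatorname{rk} F} < \frac{n+1}{n}$, equivalently $\dim(V'\cap H^0(F)) \le \operatorname{rk} F$ for all such $F$ (using that $k=n+1 > n$ so the critical interval extends to $\infty$). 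So the whole problem reduces to bounding $\dim(V'\cap H^0(F))$ for subbundles of the dual span.

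The key step is to translate a destabilizing subbundle $F\subset N$ back across the duality. Dualizing the inclusion $F\hookrightarrow N$ and using $0\to E^*\to V^*\otimes{\mathcal O}\to N\to 0$, one relates subbundles of $N$ generated by subspaces of $V'$ to quotients $V^*\to V^*/(\text{something})$, hence to subspaces of $V$ — and then to subbundles of $L$ generated by those subspaces. But $L$ is a line bundle, so its only subbundles are $L$ itself and line subbundles of the form $L(-D)$; this rigidity is exactly what forces the count $\dim(V'\cap H^0(F))\le\operatorname{rk} F$. More precisely, if $W'\subset V'$ generates a subbundle $F$ of rank $r < n$ with $\dim W' \ge r+1$, one gets a subspace $W\subset V$ of dimension $\dim V - \dim W' = n+1-\dim W' \le n - r$ whose sections fail to generate all of $L$ generically, contradicting generation of $(L,V)$ — or, when they do generate $L$, one gets that $W$ spans a proper linear subspace giving a sub-line-bundle of too high degree, which is where the Petri/Brill-Noether input enters to control $h^0$ of sub-line-bundles of $L$.

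I expect the main obstacle to be making the correspondence between subbundles of $N = D_V(L)$ and the "co-generation" data on the $L$ side completely precise, including the bookkeeping of saturations (a subspace $W'\subset V'$ may generate a subsheaf whose saturation $F$ has strictly larger $h^0$, and one must ensure the inequality survives saturation) and the passage between generic generation and honest generation. The cleanest route is probably to invoke the general comparison of stability under dual span from \cite[section 5.4]{BGMN} rather than reproving it: that reference shows the $\alpha$-stability of $D(E,V)$ for large $\alpha$ is equivalent to a stability-type condition on $(E,V)$ which, for $(E,V) = (L,V)$ with $L$ a line bundle, is automatic. Indeed this is precisely the content of \cite[Corollary 5.10]{BGMN}, so in the survey the proof is essentially a pointer to that corollary together with the observation that generatedness plus $\deg L > 0$ (hence $H^0(L^*)=0$) are exactly the hypotheses needed to put ourselves in the situation where dual span is a genuine duality; the one-line argument is that a line bundle, having no nontrivial proper subbundle structure beyond twisting down by effective divisors, cannot be destabilized, and this transfers to $D(L,V)$ in the large-$\alpha$ chamber.
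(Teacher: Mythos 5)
Your reduction of large-$\alpha$ stability to the combinatorial inequality $\dim W\le \mbox{rk}\,F$ for every proper coherent subsystem $(F,W)$ of $D(L,V)=(D_V(L),V')$ is correct (and the observation that $\gcd(n,n+1)=1$ rules out the equality case is the right one), but the argument you then give for that inequality does not work, and your fallback is circular. The ``more precisely'' step fails: from $W'\subseteq V'$ with $\dim W'\ge \mbox{rk}\,F+1$ you produce a nonzero subspace $W\subseteq V$ and claim its sections fail to generate $L$ generically, contradicting generation of $(L,V)$. Neither half of this is right: any nonzero subspace of $H^0(L)$ generically generates the line bundle $L$ (a single nonzero section already does), and generation of $(L,V)$ is a statement about $V$ itself, not about its subspaces, so no contradiction is available. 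The appeal to Petri/Brill--Noether theory to control sub-line-bundles of $L$ is also a red herring: the lemma holds on an arbitrary smooth curve, and the paper's proof uses no Brill--Noether input whatsoever. Finally, ``invoke \cite[section 5.4]{BGMN}'' cannot serve as the cleanest route, because the statement being proved \emph{is} \cite[Corollary 5.10]{BGMN}; citing it here is circular.

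The missing idea is to count sections on the quotient side rather than trying to transport subbundles across the duality. Given a proper subsystem $(F,W)$ with $r=\mbox{rk}\,F<n$, set $Q=D_V(L)/F$. Since $D_V(L)$ is a quotient of $V^*\otimes\mathcal{O}$, the bundle $Q$ is generated by the image of $V'$ in $H^0(Q)$. Moreover $Q$ is not a trivial bundle: a surjection $D_V(L)\to\mathcal{O}$ would give a nonzero element of $H^0(D_V(L)^*)=\ker\bigl(V\to H^0(L)\bigr)=0$. A generated bundle of rank $n-r$ which is not trivial needs at least $n-r+1$ generating sections, so the image of $V'$ in $H^0(Q)$ has dimension at least $n-r+1$; since $W$ lies in the kernel of $V'\to H^0(Q)$, this gives $\dim W\le (n+1)-(n-r+1)=r$, which is exactly the inequality you needed. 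This two-line argument, using only generation of $D(L,V)$ and the vanishing $H^0(D_V(L)^*)=0$, is the paper's proof.
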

\begin{proof}Let $(F,W)$ be a coherent subsystem of $D(L,V)$ with
$\mbox{rk} F<n$. Since $D(L,V)$ is generated and $H^0(D_V(L)^*)=0$, it follows that $D_V(L)/F$ is non-trivial and that it is generated by the image of  $V^*$ in $H^0(D_V(L)/F)$. This implies that this image has dimension
$\geq n-\mbox{rk} F+1$. So $\frac{\dim W}{\text{rk}F}\leq
1<\frac{n+1}{n}$ which implies the result.
\end{proof}
\begin{theorem}\label{th5}$G_L(n,d,n+1)$ is birational to $G(1,d,n+1)=G_d^n$.
\end{theorem}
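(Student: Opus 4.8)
The plan is to exhibit the dual span construction $D$ as the birational inverse of the map $G(1,d,n+1)\to G_L(n,d,n+1)$ it induces, by working over the open locus of generated linear series. First I would fix a general point $(L,V)\in G_d^n=G(1,d,n+1)$. Since $C$ is a Petri curve and we may assume $d$ is in the range where $G_d^n$ is non-empty, the general such $(L,V)$ has $L$ globally generated and $H^0(L^*)=0$ (the latter is automatic for $d>0$, the former holds generically by standard Brill--Noether theory on a Petri curve). For such $(L,V)$, Lemma~\ref{lem5} says $D(L,V)=(D_V(L),V')$ is $\alpha$-stable for large $\alpha$, so it defines a point of $G_L(n,d,n+1)$; one checks from the defining exact sequence $0\to L^*\to V^*\otimes{\mathcal O}\to D_V(L)\to 0$ that $D_V(L)$ has rank $n$ (since $\dim V^*=n+1$ and $L^*$ has rank $1$) and degree $d$ (since $\deg D_V(L)=-\deg L^*=d$), and that $\dim V'=n+1$, so the type is correct. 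This gives a rational map $D:G_d^n\dashrightarrow G_L(n,d,n+1)$.

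Next I would construct the rational map in the other direction, again using $D$. Take a general $(E,V)\in G_L(n,d,n+1)$; the point is that for large $\alpha$, $\alpha$-stability forces $(E,V)$ to be generated (or at least generically generated) — this is the content of the structural results on $G_L$ in \cite{BGMN}, and for generated $(E,V)$ with $H^0(E^*)=0$ the full duality $D(D(E,V))=(E,V)$ holds, as recalled in the Dual Span bullet of Section~\ref{sec3}. So over the locus of generated coherent systems with $H^0(E^*)=0$, $D$ is an involution, and it carries generated systems of type $(n,d,n+1)$ to coherent systems $D(E,V)=(D_V(E),V')$ of type $(1,d,n+1)$ — i.e. to line bundles with an $(n+1)$-dimensional space of sections, hence to points of $G_d^n$. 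This produces the inverse rational map $G_L(n,d,n+1)\dashrightarrow G_d^n$.

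Finally I would assemble these into a birational equivalence: the two maps are mutually inverse wherever both are defined, since $D\circ D=\mathrm{id}$ on the common open locus of generated coherent systems with vanishing $H^0$ of the dual. It then remains to check that this common locus is non-empty and dense on both sides. On the $G_d^n$ side, density follows because "$L$ globally generated" is an open condition and is satisfied by the general linear series on a Petri curve (if $G_d^n$ is non-empty at all). On the $G_L(n,d,n+1)$ side one needs that the general $\alpha$-stable system for large $\alpha$ is generated, which follows from the classification of elements of $G_L$ in \cite{BGMN}; one should also verify that $D$ is a morphism on these loci (not merely set-theoretic) by the usual relative construction of the dual span in families, so that the birational statement is genuine and not just a bijection of points.

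I expect the main obstacle to be the density/genericity bookkeeping rather than any single hard computation: one must be careful that "general $(L,V)$" being globally generated really does hold on every component of $G_d^n$, and that "general $(E,V)\in G_L(n,d,n+1)$" is generated — i.e. that the non-generated locus is a proper closed subset. Both are handled by the structural results already available (Petri curve theory for the first, \cite{BGMN} for the second), but marshalling them so that $D$ and its inverse are defined on dense opens that correspond under $D$ is where the real work lies; the rank/degree/dimension count for $D_V(L)$ and the involutivity $D\circ D=\mathrm{id}$ are then routine.
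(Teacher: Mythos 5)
Your proposal is correct and is essentially the paper's argument: the dual span construction identifies the generated loci on the two sides, and the birationality reduces to the parameter count showing that the non-generated $(L,V)$ and $(E,V)$ contribute dimension $<\beta(n,d,n+1)=\beta(1,d,n+1)=\dim G_d^n$, which is exactly the ``density bookkeeping'' you flag as the remaining work and which the paper defers to the proof of \cite[Theorem 5.11]{BGMN}. The one caution is that large-$\alpha$ stability by itself forces only \emph{generic} generation of $(E,V)$, so the density of the generated locus in $G_L(n,d,n+1)$ genuinely requires that dimension count rather than a structural classification of all elements of $G_L$.
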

\begin{proof}Note that $\beta (n,d,n+1)=\beta (1,d,n+1)=\dim
(G(1,d,n+1))$. Now, do a parameter count to show that
non-generated $(L,V)$ and $(E,V^*)$ contribute $<\beta(n,d,n+1)$ to
the dimension (see the proof of \cite[Theorem 5.11]{BGMN} for details).
\end{proof}
\begin{corollary}\label{cor2}$G_L(n,d,n+1)\neq \emptyset $ if and only if $\beta(n,d,n+1) \geq 0$,
i.e. if and only if
\begin{equation}\label{conditions3}d\geq
g+n-\Big{[}\frac{g}{n+1}\Big{]}.
\end{equation}
\end{corollary}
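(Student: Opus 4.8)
The plan is to read off the corollary from Theorem \ref{th5} together with classical Brill--Noether theory for linear series on the Petri curve $C$. Theorem \ref{th5} identifies a dense open subset of $G_L(n,d,n+1)$ with a dense open subset of $G(1,d,n+1)=G^n_d$; in particular the two moduli spaces are empty or non-empty simultaneously, so the whole question reduces to deciding when $G^n_d\neq\emptyset$.

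For this I would invoke the classical Existence Theorem for special divisors. Writing $r=n$ for the projective dimension, the relevant Brill--Noether number is $\rho(g,r,d)=g-(r+1)(g-d+r)=g-(n+1)(g-d+n)$, and an elementary expansion (already used in the proof of Theorem \ref{th5}) gives $\beta(n,d,n+1)=\beta(1,d,n+1)=\rho(g,n,d)$. By the Existence Theorem of Kempf and Kleiman--Laksov, $G^n_d\neq\emptyset$ for \emph{every} curve as soon as $\rho(g,n,d)\ge0$; conversely, a Petri curve is Brill--Noether general, so $G^n_d=\emptyset$ when $\rho(g,n,d)<0$ (see \cite{ACGH,Gie,Laz}). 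Combining this with the previous paragraph yields $G_L(n,d,n+1)\neq\emptyset$ if and only if $\beta(n,d,n+1)\ge0$.

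Finally I would translate the inequality $\beta(n,d,n+1)\ge0$ into the explicit form (\ref{conditions3}). Since $\beta(n,d,n+1)=g-(n+1)(g-d+n)$, we have $\beta(n,d,n+1)\ge0$ iff $(n+1)(g+n-d)\le g$, i.e.\ iff $d\ge g+n-\frac{g}{n+1}$; as $d$ and $g+n$ are integers this is equivalent to $d\ge g+n-\big[\frac{g}{n+1}\big]$, which is exactly (\ref{conditions3}).

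There is no genuinely hard step here once Theorem \ref{th5} is granted: the proof is a reduction plus a citation of classical Brill--Noether theory plus a one-line arithmetic manipulation. The only places that call for a little care are the observation that a birational equivalence transfers non-emptiness in both directions (so that Theorem \ref{th5} really does give an ``if and only if''), and the fact that the emptiness half of the classical statement, hence of the corollary, genuinely uses the hypothesis that $C$ is a Petri curve --- dropping it, $G^n_d$ and therefore $G_L(n,d,n+1)$ can be non-empty for some $d$ below the bound in (\ref{conditions3}).
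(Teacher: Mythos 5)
Your proof is correct and takes essentially the same route as the paper: the paper's entire proof of this corollary is the one-line remark that it ``follows from classical Brill--Noether theory, since $C$ is Petri,'' with Theorem \ref{th5} supplying the reduction to $G(1,d,n+1)=G^n_d$ exactly as you describe. Your added care about transferring non-emptiness in both directions through the birational equivalence, and the explicit arithmetic converting $\beta(n,d,n+1)\ge 0$ into (\ref{conditions3}), simply spell out what the paper leaves implicit.
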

\begin{proof}This follows from classical Brill-Noether theory, since $C$ is Petri.\end{proof}
\begin{theorem}\label{cor3}{\em(\cite[Theorem 2]{Br})} If (\ref{conditions3}) holds and $d\leq g+n$, then
$U^s(n,d,n+1)\neq \emptyset$. Moreover, except for $g=n=2$, $d=4$, $U(n,d,n+1)\ne\emptyset$\end{theorem}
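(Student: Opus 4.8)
The plan is to produce the required coherent systems as dual spans, and to read off $\alpha$-stability for \emph{all} $\alpha>0$ (not just for large $\alpha$) from ordinary semistability of the underlying bundle.

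Since $C$ is Petri and (\ref{conditions3}) holds, Corollary~\ref{cor2} gives $G_L(n,d,n+1)\ne\emptyset$ and $\beta(n,d,n+1)=\beta(1,d,n+1)\ge0$, so $G_d^n$ is non-empty. First I would take $(L,V)$ a general point of $G_d^n$. It is generated: the locus of degree-$d$ linear series having a base point is dominated by $G_{d-1}^n\times C$, hence has dimension $\le\dim G_{d-1}^n+1<\dim G_d^n$ since $C$ is Petri (and when $\dim G_d^n=0$ one gets $G_{d-1}^n=\emptyset$, so every such $(L,V)$ is generated). Moreover, in the range $d\le g+n$ the general line bundle of degree $d$ with $h^0\ge n+1$ has $h^0=n+1$, so $V=H^0(L)$ and $h^1(L)=n+g-d$. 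Now form $D(L,V)=(E,V')$, a generated coherent system of type $(n,d,n+1)$ fitting in $0\to L^*\to V^*\otimes{\mathcal O}\to E\to0$ with $H^0(E^*)=0$; by Lemma~\ref{lem5} it is $\alpha$-stable for all large $\alpha$. The goal is to show that $E$ is semistable, and in fact stable except when $g=n=2$, $d=4$.

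Granting this, $(E,V')$ lies in $U^s(n,d,n+1)$, i.e.\ it is $\alpha$-stable for every $\alpha>0$ (for $k=n+1$ the conditions (\ref{conditions1}) reduce to $\alpha>0$). Indeed, a proper coherent subsystem $(F,W)$ with $\mathrm{rk}\,F=n$ has $F=E$ and $\dim W\le n$, so $\mu_\alpha(F,W)<\mu_\alpha(E,V')$ for $\alpha>0$; if $1\le\mathrm{rk}\,F=r\le n-1$, the argument in the proof of Lemma~\ref{lem5} shows the image of $V^*$ in $H^0(E/F)$ has dimension $\ge n-r+1$, whence $\dim W\le r$ and $\frac{\dim W}{r}\le1<\frac{n+1}{n}$; combined with $\frac{\deg F}{r}\le\frac{d}{n}$ (semistability of $E$) this gives $\mu_\alpha(F,W)<\mu_\alpha(E,V')$ for every $\alpha>0$. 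If moreover $E$ is stable, then $(E,V')\in G_L(n,d,n+1)$ has stable underlying bundle, so $(E,V')\in U(n,d,n+1)$.

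The main obstacle is therefore the (semi)stability of $E$, equivalently of the kernel bundle $M:=E^*=\ker\big(H^0(L)\otimes{\mathcal O}\to L\big)$; this is the heart of the matter, worked out in \cite{Br} using Butler's methods \cite{Bu}. A nonzero subbundle $F\subseteq M$ of rank $r$ has its sections spanning a subspace $V_F\subseteq V$ with $\dim V_F\ge r+1$ and $F\subseteq M_{V_F,L_F}$, $L_F\subseteq L$ the image subsheaf; arguing inductively on rank, one reduces to a numerical inequality between $\deg L_F$, $\dim V_F$, $d$ and $n$. The decisive point is that, $C$ being Petri, a \emph{general} $(L,V)\in G_d^n$ contains no sub-linear system $(L_F,V_F)$ violating the classical Brill--Noether inequality $\rho(g,\dim V_F-1,\deg L_F)\ge0$, and a parameter count then excludes any $(L_F,V_F)$ that would make $F$ destabilizing; this yields semistability, with strict inequality---hence stability of $E$---unless equality is forced by a splitting. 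In the range (\ref{conditions3}) with $d\le g+n$ the only forced equality occurs for $g=n=2$, $d=4$: there $\beta(2,4,3)=2$, $d=g+n$, every degree-$4$ line bundle on a genus-$2$ curve has $h^0=3$, and using the hyperelliptic pencil one checks that $M$ acquires a sub-line bundle of slope exactly $\mu(M)$, so $M$, and hence $E$, is strictly semistable and $U(2,4,3)=\emptyset$. Making the parameter count precise, and disposing of the boundary cases $d=g+n$ at small genus, is where the real work lies; the rest is bookkeeping around Lemma~\ref{lem5} and Corollary~\ref{cor2}.
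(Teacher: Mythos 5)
Your proposal follows essentially the same route as the paper: take a generated $(L,H^0(L))$ with $h^0(L)=n+1$, apply the dual span construction and Lemma~\ref{lem5} for large $\alpha$, and invoke Butler's theorem (\cite[Theorem 2]{Bu}, see also \cite[Proposition 4.1]{Br}) for the stability of $D_V(L)$, with the exceptional case $g=n=2$, $d=4$ traced to strict semistability of the kernel bundle exactly as in \cite[Proposition 4.1]{Br}. The extra details you supply --- generatedness of the general $(L,V)$, and the passage from semistability of $E$ together with large-$\alpha$ stability to $\alpha$-stability for all $\alpha>0$ --- are correct and are left implicit in the paper's proof.
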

\begin{proof}If (\ref{conditions3}) holds and $d\leq g+n$, there exists a line bundle
$L$ of degree $d$ with $h^0(L)=n+1$, so we can take $V=H^0(L)$ for
such $L$, and suppose that $V$ generates $L$. By Lemma 6.1, $D(L,V)$ is $\alpha$-stable for large $\alpha$, while, by \cite[Theorem 2]{Bu} (see also \cite[Proposition 4.1]{Br}), $D_L(V)$ is stable for general $L$. The result for $g=n=2$, $d=4$ is contained in \cite[Proposition 4.1]{Br}.
\end{proof}
\begin{theorem}\label{th6}{\em (\cite[Theorem 2]{Br})}
If $G(\alpha ;n,d,n+1)\neq \emptyset $ for some
$\alpha$, then (\ref{conditions3}) holds.
\end{theorem}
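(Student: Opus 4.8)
The plan is to reduce the statement to classical Brill--Noether theory for line bundles on the Petri curve $C$. Concretely, I aim to associate to any $(E,V)\in G(\alpha;n,d,n+1)$ a line bundle $M$ of degree $\le d$ with $h^0(M)\ge n+1$; since $C$ is Petri, the existence of such an $M$ forces $\rho(g,\deg M,n)\ge0$ and hence $\rho(g,d,n)=g-(n+1)(g-d+n)\ge0$ (because $\rho$ is increasing in the degree), which is exactly condition (\ref{conditions3}). The natural source of $M$ is the dual span construction: a generated coherent system of type $(n,d,n+1)$ has, by rank count, a \emph{line} bundle as kernel of its evaluation map.

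The main case is when $V$ generically generates $E$. Let $E'\subseteq E$ be the subsheaf actually generated by $V$, of degree $d'\le d$; the kernel $N$ of $V\otimes\mathcal O\to E'$ is a line bundle of degree $-d'$, and dualising the evaluation sequence gives the exact sequence $0\to E'^{*}\to V^{*}\otimes\mathcal O\to N^{*}\to0$ with $N^{*}\cong\det E'$ of degree $d'$. Taking cohomology yields $h^0(\det E')=h^0(N^{*})\ge(n+1)-h^0(E'^{*})$. So as soon as $H^0(E'^{*})=0$ we are done: $\det E'$ has degree $d'\le d$ and at least $n+1$ sections (and since $\det E'$ embeds in $\det E$ we even get $h^0(\det E)\ge n+1$). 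When moreover $(E,V)$ is itself generated with $H^0(E^{*})=0$ this is exactly the statement that $D_V(E)$ (a line bundle of degree $d$) has $h^0(D_V(E))\ge n+1$, via the sequence $0\to E^{*}\to V^{*}\otimes\mathcal O\to D_V(E)\to0$ of the excerpt.

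It then remains to dispose of the degenerate situations: (i) $V$ does not generically generate $E$, and (ii) $V$ generically generates $E$ but $H^0(E'^{*})\ne0$. In case (i) let $F$ be the subbundle generically generated by $V$, of rank $r<n$; since $E/F$ is torsion-free every section of $V$ lies in $H^0(F)$, so $(F,V)$ is a proper subsystem of type $(r,\deg F,n+1)$, and its $\alpha$-slope derivative $(n+1)/r$ exceeds $(n+1)/n$, so $(F,V)$ violates $\alpha$-stability for all large $\alpha$; for the remaining $\alpha$ I would apply the constructions above to $(F,V)$ (which does generically generate $F$) to produce a line bundle of degree $\le\deg F$ with $\ge r+1$ sections, and then use classical Brill--Noether on $C$ to bound $\deg F$ below by an amount forcing $\deg F/r\ge d/n$, whence $(F,V)$ destabilises $(E,V)$ for every $\alpha>0$. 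In case (ii), $E'$ being globally generated makes every nonzero map $E'\to\mathcal O$ surjective; removing a maximal trivial quotient $E'\twoheadrightarrow\mathcal O^{j}$ with $1\le j\le n-1$ leaves a subsheaf $E_1\subseteq E'\subseteq E$ of rank $n-j$, degree $d'$, with $h^0(E_1)\ge(n+1)-j$ and no nonzero maps to $\mathcal O$; its saturation $\overline{E_1}$ in $E$ is a rank-$(n-j)$ subsystem of $(E,V)$ whose $\alpha$-slope exceeds $\mu_\alpha(E,V)$ — automatically for large $\alpha$, and for small $\alpha$ once the degree estimate $d'/(n-j)\ge d/n$ is in hand, again from dual span and Brill--Noether applied to $\det E_1$.

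The step I expect to be the main obstacle is precisely this degree bookkeeping in the ``small $\alpha$'' regime. For large $\alpha$ everything is decided by the comparison of $\alpha$-slope \emph{derivatives} $k/n$ and is automatic; but for small $\alpha$ one genuinely needs that a generically-generated subsheaf (or the peeled-off bundle $E_1$) carrying many sections has large degree, and the only available leverage is to run the dual-span/determinant construction one further level down and quote classical Brill--Noether for the resulting line bundle on the Petri curve. Arranging the cases so that a \emph{single} subsystem destabilises $(E,V)$ simultaneously for every admissible $\alpha$, and verifying that the bundles $E_1$ inherit enough sections together with the correct degree, is where the real care is needed. (One might instead hope to transport non-emptiness from an arbitrary $\alpha$ up to $G_L(n,d,n+1)$ and then invoke Corollary \ref{cor2}, but the flips relating the $G_i$ need not preserve non-emptiness, so the direct determinantal argument above seems the safer route.)
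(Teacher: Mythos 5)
Your reduction target is the right one and your main case is sound: for a generated $(E,V)$ with $H^0(E^*)=0$, the dual sequence $0\to E^*\to V^*\otimes\mathcal O\to D_V(E)\to0$ exhibits $D_V(E)\cong\det E$ as a line bundle of degree $d$ with $h^0\ge n+1$, and classical Brill--Noether on the Petri curve then gives $\beta(1,d,n+1)\ge0$, which is (\ref{conditions3}). The gap is exactly where you say it is, and it is genuine rather than routine bookkeeping. In case (i) the subsystem $(F,V)$ has type $(r,\deg F,n+1)$ with $r<n$, so the kernel of the evaluation map $V\otimes\mathcal O\to F'$ has rank $n+1-r\ge 2$; dualising no longer produces a line bundle, and your claim that one obtains ``a line bundle of degree $\le\deg F$ with $\ge r+1$ sections'' does not follow from the construction you describe. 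To extract sections of $\det F'$ one must pass through $\wedge^{n+1-r}V$ and control $h^0$ of the intermediate kernels, which in turn requires stability of $F'$ and of its dual span --- compare how much effort this costs in the proof of Theorem \ref{th14}(iii) already in the single case $(n,k)=(2,4)$. Even granting that line bundle, the numerical step $\deg F/r\ge d/n$ needs the inequality $\frac1r\left(g+r-\left[\frac{g}{r+1}\right]\right)\ge\frac{d}{n}$ for all $1\le r<n$ under the contradiction hypothesis $d<g+n-\left[\frac{g}{n+1}\right]$, which you assert but never verify. Case (ii) has the same two unproved ingredients (the count of sections of $V$ surviving in $E_1$ after peeling off trivial quotients, and the degree estimate for its saturation). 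As written, the argument therefore only covers the generic stratum.

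It is also worth noting that the paper's proof is organised precisely to sidestep this: one may assume $d\le g+n$ (otherwise (\ref{conditions3}) is automatic), and in that range one shows that $G(\alpha;n,d,n+1)=G_L(n,d,n+1)$ for \emph{every} $\alpha>0$ --- i.e.\ there is no wall-crossing at all in the relevant range. This is not the flip-transport argument you dismissed in your final parenthesis. Once every $\alpha$-stable $(E,V)$ is known to lie in $G_L$, the large-$\alpha$ slope comparison disposes of all lower-rank subsystems for free, and Corollary \ref{cor2} (via the birationality of Theorem \ref{th5}) yields $\beta(n,d,n+1)\ge0$. If you want to complete your direct route, the missing lemma is essentially that for $d\le g+n$ an $\alpha$-stable $(E,V)$ of type $(n,d,n+1)$ is generated with $H^0(E^*)=0$, which is the substance of \cite[Theorem 2]{Br}.
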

\begin{proof}[Indication of proof]One can show that, for $d\le g+n$,
$$G(\alpha;n,d,n+1) =G_L(n,d,n+1)$$ for all $\alpha >0$. Now use Corollary \ref{cor2}.
\end{proof}
\begin{corollary}\label{cor4}$d_0(n,n+1)=g+n-\Big{[}\frac{g}{n+1}\Big{]}$.
\end{corollary}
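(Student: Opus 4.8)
The plan is to obtain the corollary by simply combining Corollary~\ref{cor2} with Theorem~\ref{th6}; the only genuine verification needed is that the extremal value of $d$ lies in the admissible range (\ref{conditions1}), which is automatic because $k=n+1>n$.

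First I would prove the lower bound $d_0(n,n+1)\ge g+n-\left[\frac{g}{n+1}\right]$. By the definition of $d_0$ it suffices to show that if $d<g+n-\left[\frac{g}{n+1}\right]$ then $G(\alpha;n,d,n+1)=\emptyset$ for every $\alpha$ (in particular for every $\alpha$ satisfying (\ref{conditions1})). But Theorem~\ref{th6} says that non-emptiness of $G(\alpha;n,d,n+1)$ for some $\alpha$ forces (\ref{conditions3}), i.e. $d\ge g+n-\left[\frac{g}{n+1}\right]$; contrapositively, when $d<g+n-\left[\frac{g}{n+1}\right]$ all the spaces $G(\alpha;n,d,n+1)$ are empty, as required. (That $d_0$ is well defined at all also follows from Theorem~\ref{th6} together with Corollary~\ref{cor2}, which gives non-emptiness for all large $d$.)

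Next I would prove the upper bound by producing a non-empty moduli space at $d=g+n-\left[\frac{g}{n+1}\right]$. For this value of $d$ condition (\ref{conditions3}) holds, hence $\beta(n,d,n+1)\ge0$, and Corollary~\ref{cor2} yields $G_L(n,d,n+1)\ne\emptyset$. It then remains only to check that the value of $\alpha$ implicit in $G_L(n,d,n+1)$ (any $\alpha>\alpha_L$) satisfies (\ref{conditions1}): since $k=n+1>n$ we have $n-k=-1<0$, so $\alpha(n-k)<0<d$, while $d=g+n-\left[\frac{g}{n+1}\right]\ge g+n-\frac{g}{n+1}=n+\frac{ng}{n+1}>0$; thus $\alpha>0$, $d>0$ and $\alpha(n-k)<d$ all hold. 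Hence $G(\alpha;n,d,n+1)\ne\emptyset$ for this $\alpha$, so $d_0(n,n+1)\le g+n-\left[\frac{g}{n+1}\right]$. Combining the two inequalities gives the stated equality.

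I do not anticipate any real obstacle: the substantive content is entirely contained in Corollary~\ref{cor2} (non-emptiness when $\beta\ge0$, via classical Brill--Noether theory on the Petri curve $C$ and the dual span construction) and in Theorem~\ref{th6} (the converse bound). The one point not to overlook is the bookkeeping with the critical values $\alpha_i$, namely confirming that the relevant $\alpha$ lands in the permissible range (\ref{conditions1}); as shown above this is immediate whenever $k>n$.
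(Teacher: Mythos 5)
Your proposal is correct and follows exactly the route the paper intends: the corollary is stated without proof precisely because it is the immediate combination of Theorem~\ref{th6} (lower bound) and Corollary~\ref{cor2} (non-emptiness of $G_L$ when (\ref{conditions3}) holds, with $\alpha$ large admissible since $k=n+1>n$). Your extra bookkeeping on (\ref{conditions1}) is harmless and accurate.
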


\begin{theorem}\label{th7}{\em (\cite[Theorem 3]{Br})} If $g\ge n^2-1$, then Model Theorem holds.\end{theorem}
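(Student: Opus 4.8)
The plan is to prove the Model Theorem for $k=n+1$ in its strong form (c), from which (a) and (b) follow. For $k=n+1$ the conditions (\ref{conditions1}) reduce to $\alpha>0$, $d>0$, and by Corollary \ref{cor4} we have $d_0=d_0(n,n+1)=g+n-\Big{[}\frac{g}{n+1}\Big{]}$. The ``only if'' halves of (a), (b) are immediate: $G(\alpha;n,d,n+1)\ne\emptyset$ forces $d\ge d_0$ by the very definition of $d_0$ (and $\alpha>0$, $d>0$ by Lemma \ref{lem1}), while $B(n,d,n+1)\ne\emptyset$ gives $G_0(n,d,n+1)\ne\emptyset$ by Lemma \ref{lem2}, hence again $d\ge d_0$. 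Conversely, suppose $(E,V)\in U(n,d,n+1)$; then $E$ is stable with $h^0(E)\ge n+1$, so $E\in B(n,d,n+1)$ (the ``if'' half of (b)). Moreover a comparison of $\alpha$-slopes shows $(E,V)$ is $\alpha$-stable for every $\alpha>0$: a proper subbundle $F\subset E$ of rank $n$ must equal $E$ and gives nothing, while for $\mbox{rk}F=r<n$ membership $(E,V)\in G_L(n,d,n+1)$ already forces $\dim(V\cap H^0(F))\le r$ and stability of $E$ forces $\deg F<\frac{r}{n}d$, whence $\mu_\alpha(F,W)<\frac dn+\alpha<\mu_\alpha(E,V)$. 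Thus $G(\alpha;n,d,n+1)\ne\emptyset$ for all $\alpha>0$ (the ``if'' half of (a)), and everything reduces to showing: \emph{for $g\ge n^2-1$ and every $d\ge d_0$, $U(n,d,n+1)\ne\emptyset$.}

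For $d_0\le d\le g+n$ this is Theorem \ref{cor3}: its hypothesis (\ref{conditions3}) is exactly $d\ge d_0$, and its single exceptional case $g=n=2$ is excluded by $g\ge n^2-1$. The hypothesis enters through the elementary equivalences
$$g\ge n^2-1\ \Longleftrightarrow\ \Big{[}\frac{g}{n+1}\Big{]}\ge n-1\ \Longleftrightarrow\ d_0+(n-1)\le g+n ,$$
so that the $n$ consecutive integers $d_0,d_0+1,\dots,d_0+n-1$ all lie in the range $[d_0,g+n]$; hence $U(n,d^*,n+1)\ne\emptyset$ for one value $d^*$ in each residue class modulo $n$. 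Now let $d\ge d_0$ be arbitrary, write $d=d^*+tn$ with $d^*\in\{d_0,\dots,d_0+n-1\}$ and $t\ge 0$, and apply the following step $t$ times. Given $(E',V)\in U(n,d',n+1)$, choose a general elementary transformation $0\to E'\to E\to T\to 0$ with $T$ a torsion sheaf of length $n$ (for instance supported at $n$ general points, with general directions); since $V\subseteq H^0(E')\hookrightarrow H^0(E)$, the pair $(E,V)$ is a coherent system of type $(n,d'+n,n+1)$. Its membership in $G_L$ is automatic: for a proper subbundle $F\subset E$ of rank $r<n$, let $F_0$ be the saturation of $F\cap E'$ in $E'$, a proper subbundle of $E'$ of the same rank $r$; every section in $V\cap H^0(F)$ lies in $H^0(E')$ and hence in $H^0(F\cap E')\subseteq H^0(F_0)$, so $\dim(V\cap H^0(F))\le\dim(V\cap H^0(F_0))\le r$ because $(E',V)\in G_L$, and this is precisely the condition for $(E,V)$ to be $\alpha$-stable for large $\alpha$. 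Therefore $(E,V)\in U(n,d'+n,n+1)$ as soon as $E$ is stable.

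The one substantive point is thus that a general length-$n$ elementary transformation of a stable bundle is again stable. The crude estimate — that on restriction to $E'$ the degree of a rank-$r$ subbundle drops by at most $\mbox{length}(T)=n$ — does not suffice (it only disposes of subbundles of corank one), so one proceeds by a dimension count: in each rank the subbundles of $E'$ of maximal slope form a bounded family, each member cutting out on the transformation data (the support points and the directions in the fibres) a locus of positive codimension, which a general choice avoids; in the present range $\mu(E')=d'/n$ is at most of order $g$, so the maximal subbundle slopes sit well below $\mu(E')$ and the count goes through (replacing $E'$ by a general member of $U(n,d',n+1)$ first, if necessary). This is the step I expect to need the most care.

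As an alternative for $d>g+n$, one may instead produce $(E,V)\in U(n,d,n+1)$ directly as a dual span $D(L,W)$ with $(L,W)\in G(1,d,n+1)=G_d^n$ general — a general such $(L,W)$ is generated once $d\ge g+1$, so $D(L,W)\in G_L(n,d,n+1)$ by Lemma \ref{lem5} — the residual problem then being Butler's theorem in the form that $D_W(L)$ is stable for a general $(n+1)$-dimensional subspace $W\subseteq H^0(L)$ of a general line bundle $L$ of degree $d$. In either approach the genuine content beyond the results already quoted is a (semi)stability statement for an auxiliary bundle, everything else being bookkeeping with Theorems \ref{cor3} and \ref{th6} and the reductions above.
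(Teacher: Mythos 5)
Your reduction of the Model Theorem to the single statement ``$U(n,d,n+1)\ne\emptyset$ for all $d\ge d_0$'' is correct, and the first half of your argument is exactly the point of the cited source: $g\ge n^2-1$ is equivalent to $\left[\frac{g}{n+1}\right]\ge n-1$, i.e.\ to $d_0+n-1\le g+n$, so Theorem \ref{cor3} already gives $U(n,d,n+1)\ne\emptyset$ for a full set of representatives of the residue classes modulo $n$ (the exceptional case $g=n=2$ being excluded by $g\ge n^2-1$). The gap is in your degree-raising step. That a \emph{general} length-$n$ elementary transformation of a stable bundle is again stable is not a formality: the crude degree estimate leaves a deficit of $n-r$ for subsheaves of rank $r$ (so it disposes of nothing, not even corank one, where the deficit is still $1$), and the dimension count you sketch is not carried out. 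It would have to bound, for each $r$, the family of saturated subsheaves of $E'$ of near-maximal degree and show that this family cannot sweep out all fibre directions at the chosen points; the bundles $E'$ you are transforming are Brill--Noether special (they lie in $U(n,d',n+1)$ with $d'$ close to $d_0$), so appealing to the behaviour of a general stable bundle, or to $\mu(E')$ being ``of order $g$'', gives no control on their Segre invariants. Your fallback for $d>g+n$ via dual spans of general proper subspaces $W\subsetneq H^0(L)$ is Butler's conjecture for $m=1$ and $V\ne H^0(L)$, which is not available either.

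The repair is to drop elementary transformations entirely: to pass from $d'$ to $d'+n$, tensor by $\mathcal{O}(p)$ for a point $p\in C$. Stability of $E'\otimes\mathcal{O}(p)$ is automatic; $V\subseteq H^0(E')\subseteq H^0(E'(p))$; and the $G_L$-criterion $\dim(V\cap H^0(F))\le\mbox{rk}\,F$ for proper subbundles $F\subset E'(p)$ follows by intersecting $F$ with $E'$ and saturating, exactly as in the argument you already wrote for the elementary transformation. This twisting by effective divisors is how \cite[Theorem 3]{Br} actually concludes, and with that substitution your proof closes with no auxiliary stability lemma.
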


\begin{theorem}\label{th8}{\em(\cite[Theorems 7.1, 7.2, 7.3]{BBN})} Model Theorem holds for $n=2, 3, 4$ and $g\ge3$.
\end{theorem}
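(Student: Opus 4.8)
The plan is to verify that Model Theorem, as stated in Section~\ref{sec2}, holds for $k=n+1$ in the remaining small-rank cases $n=2,3,4$ (with $g\ge3$), combining the structural results of Section~\ref{sec6} with a direct analysis of the range $d_0(n,n+1)\le d\le g+n$ and, separately, $d>g+n$. Recall from Corollary~\ref{cor4} that $d_0=d_0(n,n+1)=g+n-\big[\frac{g}{n+1}\big]$, and that Theorems~\ref{cor3} and~\ref{th6} already settle the case $d_0\le d\le g+n$ almost completely: there $G(\alpha;n,d,n+1)=G_L(n,d,n+1)$ is nonempty for all $\alpha>0$, and $U(n,d,n+1)\ne\emptyset$ except possibly for $g=n=2$, $d=4$. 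So the first step is to dispose of the range $d>g+n$, and the second is to handle the one exceptional triple.

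For $d>g+n$ I would argue as follows. One wants to produce $(E,V)\in U(n,d,n+1)$, i.e.\ $(E,V)$ with $E$ stable that is $\alpha$-stable for \emph{all} admissible $\alpha$. The natural device is the elementary transformation: start from a coherent system $(E_0,V_0)\in U(n,d_0,n+1)$ (which exists for $n=2,3,4$ by Theorem~\ref{cor3}, noting $d_0>g+n$ is impossible so $d_0\le g+n$ and the exceptional triple has $d_0=4=g+n$ only when $g=n=2$, which we treat separately), and perform $d-d_0$ generic elementary transformations $0\to E_0\to E\to T\to 0$ at general points, keeping $V=V_0\subseteq H^0(E_0)\subseteq H^0(E)$. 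Since $\dim V=n+1>n$, the critical-value structure for $k>n$ has no finite upper bound on $\alpha$, so one must check $\alpha$-stability for all $\alpha>0$; this follows because a generic elementary transformation strictly increases the slope of $E$ without creating destabilising subsystems, provided the transformation points avoid the (finitely many) subbundles that could cause trouble. One then checks that the resulting $\beta(n,d,n+1)\ge0$ automatically (it grows with $d$), so Lemma~\ref{lem3} is consistent, and that $E$ remains stable for general choices by an openness argument. I would also record that part~(b) of Model Theorem, $B(n,d,n+1)\ne\emptyset\iff d\ge d_0$, follows from the above together with Lemma~\ref{lem2} and Corollary~\ref{cor4}.

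The one genuinely delicate point is the exceptional case $g=n=2$, $d=4$ (so $k=3$), where Theorem~\ref{cor3} only gives $U^s$, not $U$. Here $d=4=g+n=d_0$, and one needs $(E,V)$ of type $(2,4,3)$ with $E$ \emph{stable} that is $\alpha$-stable for all $\alpha>0$. I expect this to be the main obstacle: on a genus-$2$ curve the line bundles of degree $4$ with $h^0=3$ are exactly $K\otimes\mathcal{O}(p+q)$ with $p+q$ not in the canonical series, and the dual-span bundle $D_L(H^0(L))$ has rank $2$ and degree $4$ but may fail to be stable for \emph{every} such $L$ (its stability is what breaks down precisely in this numerology, which is why Br\`eneur's theorem excludes it). The resolution in \cite{BBN} is presumably to construct $(E,V)$ by a different route --- e.g.\ as an extension of coherent systems $0\to(E_1,V_1)\to(E,V)\to(E_2,V_2)\to0$ classified by $\mathrm{Ext}^1((E_2,V_2),(E_1,V_1))$, choosing the extension class generically so that $E$ is forced to be stable while the $\alpha$-stability inequalities $\mu_\alpha(F,W)<\mu_\alpha(E,V)$ are checked directly against the (short) list of possible destabilising subsystems --- or else to verify by hand that a suitable $D_L(H^0(L))$ \emph{is} stable for a special $L$. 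Once this single moduli space is shown nonempty with stable underlying bundle, parts~(a), (b), (c) of Model Theorem for $n=2$ follow, and for $n=3,4$ the argument is the uniform elementary-transformation construction above with no exceptional triple to worry about.
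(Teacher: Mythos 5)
Your reduction of the problem via Corollary~\ref{cor4}, Theorem~\ref{cor3} and Theorem~\ref{th6} to the range $d>g+n$ is the right starting point, but the argument you offer for that range has a genuine gap, and it is exactly the gap that forces the restriction to $n=2,3,4$ in the statement. You claim that starting from $(E_0,V_0)\in U(n,d_0,n+1)$ and applying generic elementary transformations keeps you inside $U(n,d,n+1)$ for every $d>d_0$. This is unsubstantiated: under $0\to E\to E'\to T\to 0$ the subbundle of $E'$ generated by a subspace $W\subseteq V$ can gain up to $\ell(T)$ in degree while $\mu(E)$ rises only by $\ell(T)/n$, so new destabilising subsystems can appear; and the subbundles ``that could cause trouble'' form positive-dimensional families (e.g.\ the subsheaves generated by the points of a Grassmannian of subspaces of $V$), not finitely many, so showing a generic transformation avoids them all is precisely the hard point, not a remark. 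Moreover, if your claim were correct it would establish part (c) of Model Theorem for \emph{all} $n$ and all $g\ge3$, i.e.\ the conjecture recorded in Problem~\ref{p5}; that contradicts the known state of the art (Theorem~\ref{th7} needs $g\ge n^2-1$, and even for $g=3$ Theorem~\ref{th12} must invoke Teixidor's degeneration result to handle $d=2n+2$). What the paper, following \cite{BBN}, actually does is different: the elementary-transformation construction (Proposition~\ref{th10}) starts not from an $\alpha$-stable system but from $L_1\oplus\dots\oplus L_n$ with $h^0(L_i)=2$, and therefore only works for $d\ge d_1\approx\frac{n(g+3)}{2}+1$, which lies well above $g+n+1$; the intermediate range $g+n<d<d_1$ is handled by a separate argument using extensions of coherent systems $0\to(E_1,V_1)\to(E,V)\to(E_2,V_2)\to0$, and it is this step that is carried out only for $n=2,3,4$ in \cite[Theorems 7.1--7.3]{BBN}. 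Your proposal contains no argument at all for this middle range.

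A secondary point: the paragraph you devote to the exceptional case $g=n=2$, $d=4$ is beside the point here, since the theorem assumes $g\ge3$ and the exception in Theorem~\ref{cor3} therefore never arises; the genus-$2$ situation is treated separately in Theorem~\ref{th9}, where Model Theorem in fact fails for $d=2n$. (Also, the reference \cite{Br} is Brambila-Paz, not ``Br\`eneur''.)
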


For $g=2$, Model Theorem does not quite hold; the result is as follows.

\begin{theorem}\label{th9}{\em(\cite[Theorem 8.2]{BBN})} Let $X$ be a curve of genus $2$. Then $d_0=n+2$ and
\begin{itemize}
\item $U^s(n,d,n+1)\ne\emptyset$ if and only if $d\ge d_0$
\item $U(n,d,n+1)\ne\emptyset$ if and only if $d\ge d_0$, $d\ne2n$.
\end{itemize}
\end{theorem}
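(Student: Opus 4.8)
I would split the statement into four parts: the value $d_0=n+2$; necessity of the stated inequalities; non-emptiness of $U^s(n,d,n+1)$ for every $d\ge n+2$; and the sharper construction, with $E$ stable, valid precisely when moreover $d\ne 2n$.

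Since a curve of genus $2$ is a Petri curve, Corollary \ref{cor4} applies and gives $d_0=g+n-[g/(n+1)]=n+2$, because $[2/(n+1)]=0$ for $n\ge 2$. For necessity: if $U^s(n,d,n+1)\ne\emptyset$, or $U(n,d,n+1)\ne\emptyset$, then $G(\alpha;n,d,n+1)\ne\emptyset$ for some $\alpha$, so Theorem \ref{th6} forces $d\ge n+2$. To exclude $d=2n$ for $U$, I would show that \emph{no} coherent system of type $(n,2n,n+1)$ on a genus-$2$ curve can have $E$ stable. Indeed such an $(E,V)$ has $h^0(E)\ge n+1$, while $\chi(E)=2n-n(g-1)=n$; hence $h^1(E)\ge 1$, and by Serre duality $\mbox{Hom}(E,K)=H^1(E)^*\ne 0$. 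A non-zero homomorphism $E\to K$ has image a line subbundle of $K$, hence exhibits a quotient line bundle of $E$ of degree $\le 2$; but a stable bundle of rank $n$ and slope $2$ has only quotient line bundles of degree $>2$ (the kernel of such a quotient would have slope $\ge 2$). This contradiction gives $U(n,2n,n+1)=\emptyset$, and it also explains why $2n$ is the unique bad degree: for $d\ne 2n$ the obstruction disappears, since either $h^1(E)$ can vanish ($d>2n$) or a degree-$2$ quotient no longer contradicts the stability of $E$ ($d<2n$, so $\mu(E)<2$).

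For the constructions I would use the dual span. Take a line bundle $L$ of degree $d\ge n+2$; on a genus-$2$ curve $h^0(L)=d-1\ge n+1$, so a general $(n+1)$-dimensional subspace $V_0\subseteq H^0(L)$ generates $L$, and I set $(E,V)=D(L,V_0)$, a coherent system of type $(n,d,n+1)$. The proof of Lemma \ref{lem5} shows that every coherent subsystem $(F,W)$ of $(E,V)$ with $\mbox{rk}\,F<n$ satisfies $\dim W\le \mbox{rk}\,F$. Combining this with the semistability of $E=D_{V_0}(L)$ (Butler's theorem \cite{Bu}; see also \cite{BBN}), for $\mbox{rk}\,F=r<n$ one gets $\mu_\alpha(F,W)\le\frac{d}{n}+\alpha<\frac{d}{n}+\alpha\frac{n+1}{n}=\mu_\alpha(E,V)$, while for $\mbox{rk}\,F=n$ (so $\deg F<d$, or $W\subsetneq V$) the inequality $\mu_\alpha(F,W)<\mu_\alpha(E,V)$ is immediate. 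Hence $(E,V)$ is $\alpha$-stable for every $\alpha>0$, so $U^s(n,d,n+1)\ne\emptyset$ for all $d\ge n+2$. For $U$ one needs $E$ stable as well; here $D_{V_0}(L)$ is stable for general $(L,V_0)$ precisely when the obstruction of the previous paragraph is absent, i.e.\ when $d\ne 2n$, and then $(E,V)\in G_L(n,d,n+1)$ by Lemma \ref{lem5}, so $(E,V)\in U(n,d,n+1)$.

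The main obstacle is the precise stability input: one must establish that $D_{V_0}(L)$ is semistable for a general generated $(L,V_0)$ of type $(1,d,n+1)$ with $d\ge n+2$, and in fact stable for general such $(L,V_0)$ exactly when $d\ne 2n$ --- the degree $d=2n$ being the single value in genus $2$ at which stability of the dual span is obstructed. (An alternative to the dual span is to start from the base case $d=n+2$ of Theorem \ref{cor3} and propagate by general elementary transformations raising the degree by $1$; these preserve $V\subseteq H^0(E)$ and, away from the slope-$2$ wall, both the $\alpha$-stability of the system and the stability of $E$, while at $d=2n$ only the latter is lost, consistently with $U^s(n,2n,n+1)\ne\emptyset$ and $U(n,2n,n+1)=\emptyset$. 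In either approach, the delicate stability bookkeeping near $d=2n$ --- recorded in full in \cite{BBN} --- is where the real work lies.)
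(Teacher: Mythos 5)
Your decomposition is sensible and several pieces are genuinely correct and complete: the identification $d_0=n+2$ from Corollary \ref{cor4} (every genus-$2$ curve is Petri), the necessity of $d\ge n+2$ via Theorem \ref{th6}, the Serre-duality argument showing $U(n,2n,n+1)=\emptyset$ (this is exactly the right obstruction), and the observation that the subsystem estimate $\dim W\le\mbox{rk}\,F$ extracted from the proof of Lemma \ref{lem5} upgrades semistability of the dual span to $\alpha$-stability of the coherent system for all $\alpha>0$. Note that the survey itself offers no proof of Theorem \ref{th9} beyond the citation of \cite[Theorem 8.2]{BBN}; the surrounding text indicates that the actual argument combines the methods of \cite{Br} (which for $g=2$ handle only $d\le g+n=n+2$, i.e.\ complete linear systems), extensions of coherent systems, and the elementary-transformation result of Proposition \ref{th10} (which for $g=2$ only covers $d\ge 3n+1$).

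The genuine gap is the stability input you yourself flag. For $d>n+2$ one has $h^0(L)=d-1>n+1$, so your construction needs $D_{V_0}(L)$ to be semistable (and stable for $d\ne 2n$) for a general \emph{incomplete} linear system $(L,V_0)$ of type $(1,d,n+1)$. Butler's theorem \cite[Theorem 2]{Bu} only covers $V_0=H^0(L)$, and for general $V_0$ the statement you need is precisely the strong form of Butler's conjecture for $m=1$, which by \cite[Proposition 9.6]{BBN} (see Problem \ref{p5}) is \emph{equivalent} to the non-emptiness statement you are trying to prove; so the central step of your plan is circular as written. The fallback via elementary transformations has the same problem in milder form: you assert without argument that a general length-one elementary transformation preserves both stability of $E'$ and $\alpha$-stability of $(E',V)$ away from $d=2n$, whereas the analogous induction in the proof of Theorem \ref{th14}(ii) needs a genuine Brill-Noether argument to rule out destabilizing line subbundles of $E'$. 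Until one of these two stability statements is actually proved for genus $2$ --- this is the content of \cite[section 8]{BBN} --- your proposal establishes only the value of $d_0$, the necessity directions, and the exclusion of $d=2n$ from $U$.
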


The proofs of these theorems depend on combining several techniques including those of \cite{Br}, extensions of coherent systems and the following result.
\begin{proposition}\label{th10}Suppose that $d\ge d_1$, where 
\begin{equation*}d_1:=
\begin{cases}\frac{n(g+3)}{2}+1
&\text{if $g$ is odd}\\
\frac{n(g+4)}2+1&\text{if $g$ is even and $n>\frac{g!}{\left(\frac{g}2\right)!\left(\frac{g}2+1\right)!}$}
\\
\frac{n(g+2)}{2}+1 &\text{if $g$ is even and $n\leq
\frac{g!}{\left(\frac{g}2\right)!\left(\frac{g}2+1\right)!}$}.
\end{cases}\end{equation*}
Then $U(n,d,n+1)\ne\emptyset$.\end{proposition}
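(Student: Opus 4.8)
\emph{Sketch of proof.}
The plan is to build $(E,V)$ explicitly, starting from a direct sum of line bundles of least possible pencil degree and correcting it by a general elementary transformation. Put
$$\gamma:=\begin{cases}\frac{g+3}{2}&\text{if $g$ is odd,}\\[2pt]\frac g2+1&\text{if $g$ is even and }n\le\frac{g!}{(g/2)!(g/2+1)!},\\[2pt]\frac g2+2&\text{if $g$ is even and }n>\frac{g!}{(g/2)!(g/2+1)!},\end{cases}$$
so that $n\gamma+1=d_1$ in each case. Since $C$ is a Petri curve, $W^1_\delta\ne\emptyset$ iff $\rho(g,1,\delta):=g-2(g-\delta+1)\ge0$, so the least degree of a pencil on $C$ is $\frac{g+3}2$ for $g$ odd and $\frac g2+1$ for $g$ even. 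In the even extremal case $\rho(g,1,\frac g2+1)=0$ and, by the Castelnuovo count on a Petri curve, $W^1_{g/2+1}$ is a finite set of $\frac{g!}{(g/2)!(g/2+1)!}$ points; in the other two cases $\rho(g,1,\gamma)>0$ and $W^1_\gamma$ is positive-dimensional. Since $\rho(g,2,\gamma)<0$ in every case, each $L\in W^1_\gamma$ has $h^0(L)=2$ and (being of minimal degree) is base-point free. Hence one can choose pairwise non-isomorphic base-point-free pencils $L_1,\dots,L_n$ of degree $\gamma$: this is free when $W^1_\gamma$ is positive-dimensional, and in the even extremal case it is possible precisely when $n\le\frac{g!}{(g/2)!(g/2+1)!}$. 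This accounts for the case distinction in the statement, and taking $\gamma$ minimal is what makes $d_1$ as small as stated.

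Set $E_0:=\bigoplus_{i=1}^nL_i$: it has rank $n$, degree $n\gamma$, is globally generated, and $h^0(E_0)=2n\ge n+1$. Form a general elementary transformation $0\to E_0\to E\to T\to0$ with $\mbox{length}\,T=d-n\gamma\ge1$ and $T$ general, supported at general points. Then $E$ has rank $n$, degree $d$ and $h^0(E)\ge h^0(E_0)\ge n+1$; take a general subspace $V\subseteq H^0(E)$ of dimension $n+1$, giving a coherent system $(E,V)$ of type $(n,d,n+1)$. I claim $(E,V)\in U(n,d,n+1)$, i.e.\ $E$ is stable and $(E,V)$ is $\alpha$-stable for all large $\alpha$. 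For stability of $E$: if $F\subsetneq E$ is a subbundle of rank $r$ with $0<r<n$, then $F\cap E_0$ has rank $r$ and, by semistability of $E_0$, degree $\le r\gamma$, while $F/(F\cap E_0)$ embeds in $T$; a dimension count (for each $j\ge0$ the degree-$(\gamma-j)$ subsheaves of $E_0$ form a family of bounded dimension, and the torsion a subsheaf can absorb is capped by $\mbox{length}\,T$) shows that for general transformation data $\frac{\deg F}{r}<\gamma+\frac{d-n\gamma}{n}=\mu(E)$, strictly because $r<n$ and $g\ge2$. Hence $E$ is stable. For $\alpha$-stability of $(E,V)$ when $\alpha$ is large: a short computation with $\alpha$-slopes shows that for type $(n,d,n+1)$ this is equivalent to $\dim(V\cap H^0(F))\le\mbox{rk}\,F$ for every subbundle $F$ with $0<\mbox{rk}\,F<n$ (the leading $\alpha$-term forces $\dim W/\mbox{rk}\,F<\frac{n+1}{n}$, hence $\le\mbox{rk}\,F$ since $\mbox{rk}\,F<n$, and the degree term never intervenes). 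Using $h^0(E)\ge2n$, $h^0(L_i)=2$ and the global generation of $E_0$ one bounds $h^0(F)\le h^0(E)-(n+1)+\mbox{rk}\,F$ for such $F$, and a general $V$ then meets $H^0(F)$ in the expected dimension $\le\mbox{rk}\,F$. Hence $(E,V)\in U(n,d,n+1)$.

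The hard part will be to make the two genericity arguments precise. For the stability of $E$ one must run the dimension count uniformly over all ranks $r<n$ and all degrees of subsheaves of $E_0$, verifying that the torsion absorbable by a subsheaf of a given slope never lifts its slope as far as $\mu(E)$; it is here that $g\ge2$ and the choice of $\gamma$ no larger than the gonality (or one more) are used. For the $\alpha$-stability of $(E,V)$ the delicate step is the uniform bound $h^0(F)\le h^0(E)-(n+1)+\mbox{rk}\,F$: this rests on $E_0/(F\cap E_0)$ being a globally generated subsheaf of $E/F$ of full rank, so that $H^0(E)$ surjects onto a large enough subspace of $H^0(E/F)$. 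Finally, the Castelnuovo count $\#W^1_{g/2+1}=\frac{g!}{(g/2)!(g/2+1)!}$ for even $g$ relies on $C$ being a Petri curve (Gieseker--Lazarsfeld); it is exactly this number that forces the threshold $d_1$ to jump up by $n$ once $n$ exceeds it.
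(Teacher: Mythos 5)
Your construction---take $n$ pairwise non-isomorphic pencils $L_i$ of the minimal degree $\gamma=\frac{d_1-1}{n}$ permitted by Brill--Noether theory on a Petri curve, form a general elementary transformation of $\bigoplus L_i$ of length $d-n\gamma$, and note that the case $g$ even with $n$ exceeding the Castelnuovo number $\frac{g!}{(g/2)!(g/2+1)!}$ forces $\gamma$ up by one---is exactly the paper's proof, which defers the genericity details to \cite[Proposition 6.6]{BBN} and cites \cite[V (1.2)]{ACGH} for that count. The only cosmetic difference is that the paper (in its parallel arguments elsewhere) obtains stability of the general elementary transformation directly from \cite[Th\'eor\`eme A.5]{Mer2} rather than by the ad hoc dimension count you sketch.
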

\begin{proof}This is proved using elementary transformations (for details see \cite[Proposition 6.6]{BBN}). The restriction on $n$ in the third formula is required because we need to have $n$ non-isomorphic line bundles $L_i$ of degree $\frac{d_1-1}n$ with $h^0(L_i)=2$. For the number of such line bundles in this case, see \cite[V (1.2)]{ACGH}. \end{proof}

\begin{remark}\label{rk3}\begin{em} Although Model Theorem is not established in all cases, one can say that $U(n,d,n+1)$ is always smooth and is irreducible of dimension $\beta(n,d,n+1)$ whenever it is non-empty and $\beta(n,d,n+1)>0$ \cite[Remark 6.2]{BBN}.
\end{em}\end{remark}

Now let us replace the Petri condition by the condition that $C$ be general (in some unspecified sense). Teixidor's result (see Theorem \ref{th15}) takes the following form when $k=n+1$.

\begin{theorem}\label{th11} Suppose that $C$ is a general curve of genus $g$ and that $d\ge d_1'$, where
\begin{equation*}d_1'=
\begin{cases}\frac{n(g+1)}{2}+1
&\text{if $g$ is odd}\\
\frac{n(g+2)}{2}+1 &\text{if $g$ is even}.
\end{cases}\end{equation*}
Then $U(n,d,n+1)\ne\emptyset$.
\end{theorem}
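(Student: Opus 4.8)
Since $k=n+1>n$, the condition $\alpha(n-k)<d$ holds automatically for $\alpha>0$, so $U(n,d,n+1)$ is simply the set of $(E,V)\in G_L(n,d,n+1)$ with $E$ stable, and it suffices to produce one such pair on the general curve of genus $g$. The plan is to use M.~Teixidor i Bigas's degeneration technique. Realise the general smooth curve as the general fibre of a flat family $\mathcal{C}\to S$ whose central fibre $C_0$ is a carefully chosen reducible nodal curve of arithmetic genus $g$; construct on $C_0$ a coherent system of type $(n,d,n+1)$ which is a limit of $\alpha$-stable coherent systems (for $\alpha$ large) with stable underlying bundle and which deforms to the nearby fibres; then, since $\alpha$-stability and stability of the underlying bundle are open conditions in families, the general fibre carries a member of $U(n,d,n+1)$.

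For $C_0$ I would take a chain $Y_1\cup\cdots\cup Y_g$ of $g$ elliptic curves, with $Y_i$ and $Y_{i+1}$ meeting transversally at one node. On each $Y_i$ one places a rank-$n$ bundle, most conveniently a direct sum of line bundles of degrees as balanced as possible, carrying a prescribed number of independent sections; one glues these bundles across the nodes by sufficiently general gluing data and assembles the $(n+1)$-dimensional space $V$ from compatible component-wise sections. The total degree $d$ must be large enough for every elliptic component to support a bundle with enough sections to realise the required global $h^0$ after gluing, and optimising the distribution of the degrees, and of the surplus sections, along the chain produces exactly the threshold $d_1'$; the two cases in the statement come from a rounding phenomenon tied to the parity of $g$.

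The crux is verifying $\alpha$-stability on $C_0$ and its survival under smoothing. A subsheaf of a bundle on a chain of curves is controlled by its restrictions to the components and by the ranks of the induced maps at the nodes, so a coherent subsystem with $\alpha$-slope $\ge\mu_\alpha(\mathcal{E},\mathcal{V})$ for $\alpha\gg0$ would be forced to concentrate too large a share of $V$ on a proper sub-chain, contradicting the prescribed numbers of sections on the components; genericity of the gluing eliminates the borderline cases and yields strict $\alpha$-stability. One must then show (i) that the construction on $C_0$ genuinely deforms to the nearby smooth fibres with $h^0(E)\ge n+1$ preserved --- the direction in which semicontinuity gives no help, and where Teixidor's dimension and transversality estimates are needed --- and (ii) that the bundle $E$, not merely the pair $(E,V)$, stays stable on the general fibre. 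Granting these, the general curve of genus $g$ carries for every $d\ge d_1'$ a coherent system of type $(n,d,n+1)$ that is $\alpha$-stable for large $\alpha$ and has stable underlying bundle, so $U(n,d,n+1)\ne\emptyset$. I expect the real difficulty to lie entirely on $C_0$: simultaneously achieving the sharp bound $d_1'$, the strict stability, and the smoothability of the construction. For the subrange $d\le g+n$ one could instead use the dual span construction already exploited in Theorem~\ref{cor3}, but that argument does not on its own reach the bound $d_1'$.
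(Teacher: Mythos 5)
Your strategy is, in substance, the right one: the statement is Teixidor's existence theorem in the case $k=n+1$, and her proof does proceed by degenerating to a reducible nodal curve, constructing a limit coherent system component by component, and smoothing. In that sense you have identified the same underlying approach as the source. However, within this survey the theorem is not obtained by re-running the degeneration: it is literally Theorem \ref{th15} specialized to $k=n+1$, and the entire ``proof'' is the arithmetic check that the bound (\ref{eqn8}) becomes $d_1'$. Indeed, for $k=n+1$ with $n\ge2$ one has $n\nmid k$ and $[k/n]=1$, so
$$d_3=(n+1)+n(g-1)-n\left[\frac{g-1}{2}\right]=
\begin{cases}\frac{n(g+1)}{2}+1&\text{if $g$ is odd,}\\[4pt]
\frac{n(g+2)}{2}+1&\text{if $g$ is even,}\end{cases}$$
the parity split coming exactly from $\left[\frac{g-1}{2}\right]$. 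Citing Theorem \ref{th15} and doing this computation is a complete proof; your route is not.

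As a self-contained argument, what you have written is a programme rather than a proof. Every step that carries content --- the choice of degrees and sections on the elliptic components that produces precisely the threshold $d_1'$, the verification that the limit object is a limit linear series in Teixidor's sense (for $k>n$ one genuinely needs that formalism, not just gluing of sections across nodes), the smoothing statement (i), and the stability of the underlying bundle $E$ on the general fibre (ii) --- is asserted and deferred, and you say so yourself. Those deferred steps are exactly the content of \cite{Te2,Te3}; nothing in your sketch would let a reader reconstruct them. So either quote Theorem \ref{th15} and reduce to the arithmetic above, or commit to reproducing Teixidor's degeneration argument in full; the intermediate position does not establish the theorem. (Your opening reduction --- that for $k=n+1>n$ there is no upper constraint on $\alpha$, so it suffices to exhibit one $(E,V)\in G_L(n,d,n+1)$ with $E$ stable --- is correct and consistent with the definition of $U(n,d,k)$ in Section \ref{sec2}.)
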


Using this, we can prove
\begin{theorem}\label{th12} Suppose that $C$ is a general curve of genus $3$. Then Model Theorem holds.
\end{theorem}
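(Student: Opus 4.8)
The plan is to prove Theorem \ref{th12}, which asserts that for a general curve $C$ of genus $3$ the Model Theorem holds for coherent systems with $k=n+1$: namely $G(\alpha;n,d,n+1)\ne\emptyset$ if and only if $\alpha>0$, $(n-k)\alpha<d$ and $d\ge d_0$; $B(n,d,n+1)\ne\emptyset$ if and only if $d\ge d_0$; and $U(n,d,n+1)\ne\emptyset$ whenever $d\ge d_0$. First I would pin down $d_0=d_0(n,n+1)$: a general curve of genus $3$ is a Petri curve, so by Corollary \ref{cor4} we have $d_0=g+n-[g/(n+1)]=3+n-[3/(n+1)]$, which equals $n+2$ for $n\ge3$ and $n+3=5$ for $n=2$ (with $g=3$). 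Since $g=3$ is odd, note also that the bound in Theorem \ref{th11} reads $d_1'=n(g+1)/2+1=2n+1$. The strategy is then to split the range $d\ge d_0$ into the ``small'' part $d_0\le d\le g+n=n+3$ and the ``large'' part $d\ge 2n+1$, and observe that for $g=3$ these two ranges overlap or meet: $n+3\ge 2n+1$ exactly when $n\le 2$, while for $n\ge 2$ we have $2n+1\le n+3$ iff $n\le 2$, so for $n\ge 3$ there is a genuine gap $n+4\le d\le 2n$ that must be filled separately.

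The core existence input is already assembled in the excerpt. For $d_0\le d\le g+n$, Theorem \ref{cor3} gives $U(n,d,n+1)\ne\emptyset$ (the exceptional case $g=n=2$, $d=4$ does not arise since $g=3$), and by Lemma \ref{lem2} and Theorem \ref{th6} this also yields $G(\alpha;n,d,n+1)\ne\emptyset$ for all admissible $\alpha$ and $B(n,d,n+1)\ne\emptyset$. For $d\ge 2n+1$, Theorem \ref{th11} directly gives $U(n,d,n+1)\ne\emptyset$, hence $B(n,d,n+1)\ne\emptyset$ and, via Lemma \ref{lem2}, $G_0(n,d,n+1)\ne\emptyset$; combined with Lemma \ref{lem5}, which gives non-emptiness of $G_L(n,d,n+1)$ (equivalently Corollary \ref{cor2}, valid since $\beta(n,d,n+1)\ge0$ for such $d$), and the flip structure on the interval $(\alpha_i,\alpha_{i+1})$, one gets $G(\alpha;n,d,n+1)\ne\emptyset$ for every admissible $\alpha$. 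The necessity direction is immediate: Lemma \ref{lem1} gives the inequalities in (\ref{conditions1}), and Theorem \ref{th6} together with Corollary \ref{cor2} shows $G(\alpha;n,d,n+1)\ne\emptyset$ forces (\ref{conditions3}), i.e. $d\ge d_0$; likewise $B(n,d,n+1)\ne\emptyset$ implies $d\ge d_0$ since $B(n,d,n+1)\ne\emptyset\Rightarrow G_0(n,d,n+1)\ne\emptyset$ by Lemma \ref{lem2}.

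The main obstacle is the intermediate range $n+4\le d\le 2n$ (nonempty only for $n\ge4$; for $n=3$ it reads $7\le d\le 6$, which is empty, and $n=2,3$ are already covered by Theorem \ref{th8}). Here neither Theorem \ref{cor3} (which needs $d\le g+n=n+3$) nor Theorem \ref{th11} (which needs $d\ge 2n+1$) applies directly. The plan is to fill this gap by the elementary transformation technique underlying Proposition \ref{th10}: starting from a coherent system $(E_0,V_0)\in U(n,d_0,n+1)$ produced by Theorem \ref{cor3}, one performs successive elementary transformations $0\to E_0\to E\to T\to0$ with $T$ of length $d-d_0$, keeping the same $(n+1)$-dimensional space of sections, and shows that for a general choice the resulting $(E,V)$ remains $\alpha$-stable for all admissible $\alpha$ with $E$ stable. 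The verification of stability after elementary transformation is the delicate point: one must control the slopes of all proper coherent subsystems of $E$, using that a general elementary transformation increases the degree without creating destabilising subbundles, and here the hypothesis that $C$ is general is what guarantees enough freedom in the choice of the torsion quotient and of the points of support; since $g=3$ the numerical constraints in Proposition \ref{th10} (which for $g=3$ give $d_1=n\cdot 6/2+1=3n+1$) are more than covered, but the point is precisely that one needs a sharper elementary-transformation argument valid already from $d=n+4$, exploiting genericity of $C$ rather than the cruder bound $d_1$. Once $U(n,d,n+1)\ne\emptyset$ is established throughout $d\ge d_0$, the statements about $G(\alpha;n,d,n+1)$ and $B(n,d,n+1)$ follow as above, and smoothness/irreducibility of $U(n,d,n+1)$ (when $\beta>0$) comes from Remark \ref{rk3}.
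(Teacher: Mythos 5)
Your treatment of the necessity direction and of the two ranges $d_0\le d\le g+n$ and $d\ge 2n+1$ (via Theorem \ref{cor3}, Theorem \ref{th6}, Corollary \ref{cor2} and Theorem \ref{th11}) is essentially sound, apart from a slip in the values of $d_0$: for $g=3$ one has $d_0=n+2$ when $n=2$ and $d_0=n+3$ when $n\ge3$, the opposite of what you wrote (this does not affect your case division, since $g+n=n+3$ either way). The genuine gap is the intermediate range $n+4\le d\le 2n$. It is empty for $n\le3$, and $n=4$ (where it is the single value $d=8$) could be disposed of by Theorem \ref{th8}, which covers $n=2,3,4$; but for $n\ge5$ you have no argument. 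You propose elementary transformations starting from $U(n,d_0,n+1)$, yet you yourself note that Proposition \ref{th10} only applies for $d\ge d_1=3n+1$ when $g=3$, which lies entirely above $2n$, and the ``sharper elementary-transformation argument valid already from $d=n+4$'' that you invoke is never supplied. Proving that a general elementary transformation of length $d-d_0$ keeps $E$ stable and $(E,V)$ $\alpha$-stable for all admissible $\alpha$ in this range is precisely the hard content, and nothing quoted in the survey delivers it.

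The paper's proof takes a different and much shorter route: it cites \cite[Theorem 8.3]{BBN}, which already establishes $U(n,d,n+1)\ne\emptyset$ for \emph{any} Petri curve of genus $3$ and every $d\ge d_0$ with the single exception $d=2n+2$, and then covers $d=2n+2$ by Theorem \ref{th11} (admissible since $2n+2\ge d_1'=2n+1$ for $g=3$ odd). Note that the paper's exceptional value $2n+2$ lies in the range you handle easily, whereas the range you cannot handle is exactly what the external result of \cite{BBN} is needed for. Unless you import that result (or prove an equivalent), your argument is incomplete for $n\ge5$.
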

\begin{proof} The methods of \cite{BBN} are sufficient to prove (for any Petri curve of genus $3$) that $U(n,d,n+1)\ne\emptyset$ if $d\ge d_0$ and $d\ne2n+2$ (see \cite[Theorem 8.3]{BBN}). The exceptional case is covered by Theorem \ref{th11}.
\end{proof}

\begin{remark}\begin{em}
Suppose now that $C$ is any smooth curve of genus $g$. Since $\gcd(n,d,n+1)=1$, a specialization argument shows that, if $G(\alpha;n,d,n+1)\ne\emptyset$ on a general curve and $\alpha$ is not a critical value, then $G(\alpha;n,d,n+1)\ne\emptyset$ on $C$. A priori, this does not imply that $U(n,d,n+1)\ne\emptyset$, but Ballico \cite{Ba4} has used Teixidor's result to show that, when $d\ge d_1'$, we have $U^s(n,d,n+1)\ne\emptyset$. If $\gcd(n,d)=1$, this gives $U(n,d,n+1)\ne\emptyset$.\end{em}\end{remark}

\section{$n=2$, $k=4$}\label{sec7}

This is the first case in which we do not know the value of $d_0$ even on a general curve. Let us define\begin{equation}\label{conditions4}d_2:=
\left\{\begin{array}{l@{\quad\quad}l@{\quad\quad}l}g+3
&\text{if $g$ is even}\\
g+4 &\text{if $g$ is odd}.
\end{array}\right.\end{equation}
Note that $\beta(2,d,4)=4d-4g-11$, so $\beta(2,g+3,4)=1$ and $g+3$ is the smallest value of $d$ for which $\beta(2,d,4)\ge0$. Moreover, on a Petri curve, $\frac{d_2-1}2$ is the smallest degree for which there exists a line bundle $L$ with $h^0(L)\ge2$.

Teixidor has proved the following result by degeneration methods.
\begin{theorem}\label{th13} {\em\cite{Te1}} Let $C$ be a general curve of genus $g\ge3$. If $d\ge d_2$, then $U(2,d,4)\ne\emptyset$.
\end{theorem}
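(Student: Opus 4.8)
The plan is to follow Teixidor's degeneration strategy: construct a nodal curve $C_0$ of arithmetic genus $g$ --- for instance a chain $Y_1\cup\cdots\cup Y_g$ of elliptic curves, each glued to the next at a pair of general points --- carrying a limit coherent system of type $(2,d,4)$ that is $\alpha$-stable for large $\alpha$ and has stable underlying bundle, and then to smooth $C_0$ together with this limit object to obtain an element of $U(2,d,4)$ on a general curve $C$. That $U(2,d,4)\ne\emptyset$ for $d\gg0$ is already known from Theorem~\ref{th1}; the content here is to push down to $d=d_2$.

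First I would fix a one-parameter family $\mathcal C\to T$ over a smooth affine curve $T$ with general fibre a general curve of genus $g$ and special fibre $C_0$, and form the associated relative moduli space of $\alpha$-stable coherent systems of type $(2,d,4)$ (for $\alpha$ large and non-critical); this is constructed as in the absolute case, its semistable completion is proper over $T$, and both $\alpha$-stability and stability of the bundle are open conditions. It therefore suffices to produce a single $\alpha$-stable, bundle-stable limit coherent system $(E_0,V_0)$ on $C_0$ lying in the closure of coherent systems on nearby smooth fibres --- equivalently, a point of the special fibre at which the morphism to $T$ is dominant.

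Next comes the construction of $(E_0,V_0)$. One assembles $E_0$ from line bundles $M_i$ on the components $Y_i$ (allowing a rank-two bundle on one or two components if convenient) with $\sum_i\deg M_i=d$ and with prescribed vanishing orders at the nodes, arranged so that the aspects glue into a four-dimensional space $V_0$ of limit sections. This is where $d\ge d_2$ and the parity of $g$ are used: recall that $\frac{d_2-1}{2}$ is precisely the least degree in which a line bundle with at least two sections exists on the general curve, so for smaller $d$ there is not enough degree to distribute over the components to build a four-dimensional space of limit sections with admissible vanishing sequences. Granting such a configuration, $\alpha$-stability of $(E_0,V_0)$ for large $\alpha$ is checked by contradiction: a destabilizing subsystem $(F,W)$ would restrict on each $Y_i$ to a subsheaf whose degree plus $\alpha$-weighted number of sections is too large, and genericity of the gluing points and of the $M_i$ excludes this; the same analysis with $V_0$ deleted gives stability of $E_0$.

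The principal obstacle, as always in such arguments, is the smoothing step. I would handle it by a dimension estimate: show that the scheme of rank-two limit linear series of the relevant type on $C_0$ has dimension at most $\beta(2,d,4)$. Combined with Lemma~\ref{lem3}, applied on the nearby smooth fibres, and upper semicontinuity of fibre dimension, this forces every component of the relative moduli space through $(E_0,V_0)$ to have relative dimension exactly $\beta(2,d,4)$, hence to dominate $T$. Equivalently, one can try to verify that the Petri map of $(E_0,V_0)$, interpreted appropriately on the nodal curve, is injective, reducing this via exact sequences along the nodes to the classical Petri property of the elliptic components; by the nodal analogue of Lemma~\ref{lem4} this yields both smoothness and the expected dimension, and hence the smoothing. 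Making the limit-linear-series dimension bound and this Petri-type injectivity precise on $C_0$ is the technical heart of the argument.
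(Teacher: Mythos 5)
The paper does not actually prove this statement: it quotes it from Teixidor's paper \cite{Te1} and says only that the proof is ``by degeneration methods.'' Your outline correctly identifies that method --- rank-two limit linear series on a chain of elliptic curves, followed by a smoothing argument controlled by a dimension bound or a Petri-type injectivity --- so as a description of the route taken in \cite{Te1} it is accurate. But as a proof it has two genuine gaps, and they sit exactly where all the content lies. First, the construction of the limit object: you write ``granting such a configuration,'' yet producing line bundles on the elliptic components with total degree exactly $d_2$ whose vanishing sequences at the nodes are compatible enough to glue into a four-dimensional space of limit sections is the combinatorial core of Teixidor's argument, and the heuristic you offer (that $\frac{d_2-1}{2}$ is the least degree of a pencil on a general curve) explains at best why one should not expect to do better than $d_2$, not why $d_2$ suffices. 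Second, the smoothing: you explicitly defer the limit-linear-series dimension bound and the nodal Petri injectivity, which is the other half of the work. So the proposal is a correct plan, not a proof. (A smaller point: routing the argument through a relative moduli space of coherent systems over the family, with the nodal curve as special fibre, is delicate because limits of locally free sheaves need not be locally free and the identification of the special fibre with a space of limit linear series is itself nontrivial; Teixidor works directly with limit linear series and a separate smoothing result.)

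You should also be aware that the paper proves a stronger statement by entirely different means: Theorem \ref{th14}(i) gives $U(2,d,4)\ne\emptyset$ for $d\ge d_2$ on any Petri curve of genus $g\ge3$, via the elementary construction $0\to L_1\oplus L_2\to E\to T\to 0$ with $L_1\not\cong L_2$ of degree $\frac{d_2-1}{2}$ and $h^0(L_i)=2$, $V=H^0(L_1)\oplus H^0(L_2)$, and stability of the general such $E$ supplied by \cite[Th\'eor\`eme A.5]{Mer2}. This is where your observation about $\frac{d_2-1}{2}$ genuinely does the work, and it avoids degeneration, limit linear series and the smoothing problem altogether.
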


By completely different methods, we can prove the following stronger result (further details will appear in \cite{GMN}).

\begin{theorem}\label{th14}Let $C$ be a Petri curve of genus $g\geq 3$. Then
\begin{enumerate}\item[(i)] $U(2,d,4)\neq \emptyset $ for $d\geq d_2$,
\item[(ii)] if $d< d_2-1$ and $(E,V)$ is $\alpha$-semistable
of type $(2,d,4)$ for some $\alpha$, then $(E,V)\in U(2,d,4)$.
Moreover, $U(2,d',4)\neq \emptyset$ for $d\leq d'\leq d_2-1$,
\item[(iii)] $G(\alpha ;2,d,4)=\emptyset $ for all $\alpha$ for
$d< min \{ \frac{4g}{5}+4 ,d_2-1 \}$.
\end{enumerate}
\end{theorem}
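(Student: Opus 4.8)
The plan is to exploit the dual span construction, together with the syzygy/projective-embedding method alluded to in Section \ref{sec3}, to reduce statements about coherent systems of type $(2,d,4)$ to statements about line bundles and linear series on the Petri curve $C$. For part (i), I would start from a line bundle $L$ of degree $d'=d-$ (suitable twist) carrying a linear series of projective dimension $3$; when $d\ge d_2$ and $C$ is Petri, classical Brill-Noether theory guarantees such $L$ with $\dim|L|\ge 3$ exists and is base-point free for general $L$, so that $(L,V)$ with $\dim V=4$ is generated. By Lemma \ref{lem5}, $D(L,V)$ is $\alpha$-stable for large $\alpha$, giving a point of $G_L(2,d,4)$ once the degree bookkeeping $d=\deg D_V(L)$ is checked. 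To land in $U(2,d,4)$ rather than merely $G_L$, I would invoke a Butler-type genericity statement (as in Theorem \ref{cor3}) to conclude that $D_V(L)$ is a \emph{stable} rank-$2$ bundle for general such $L$; when $\beta(2,d,4)=4d-4g-11$ forces the Brill-Noether locus of line bundles to have positive dimension there is room to move $L$, and for $d=d_2$ (where $\beta=1$ or near it) one handles the boundary by a more careful count as in the proof of Theorem \ref{th5}. For degrees $d$ with $d_2-1<d$ not covered this way, elementary transformations (Proposition \ref{th10}-style arguments) bridge the gap by raising the degree from a known non-empty moduli space.

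For part (iii), the emptiness statement, the plan is a Clifford-type estimate sharpened by the Petri hypothesis. Suppose $(E,V)\in G(\alpha;2,d,4)$ for some $\alpha>0$. Since $k=4>n=2$, the subsystem generated by $V$ has full rank, so after an elementary modification we may assume $(E,V)$ is generically generated, and indeed reduce to the generated case by passing to the subsheaf generated by $V$ (which only lowers the degree). The morphism $C\to\mathrm{Grass}(2,4)$ determined by a generated $(E,V)$ of type $(2,d,4)$, composed with the Plücker embedding and compared with the map given by sections of $\det E$, produces a line bundle on $C$ of controlled degree with at least a prescribed number of sections; feeding this into the genus-$g$ Clifford bound and the non-existence part of classical Brill-Noether (again using Petri to make the bound sharp) yields $d\ge \tfrac{4g}{5}+4$. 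The $\min$ with $d_2-1$ simply records that below $d_2-1$ part (ii) already gives the stronger conclusion that any semistable $(E,V)$ lies in $U(2,d,4)$, so the genuinely new content of (iii) is the regime $\tfrac{4g}{5}+4\le d\le d_2-1$ being empty of \emph{semistable} objects except those already accounted for.

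Part (ii) is the technical heart and I expect it to be the main obstacle. The claim that every $\alpha$-semistable $(E,V)$ of type $(2,d,4)$ with $d<d_2-1$ is automatically $\alpha$-stable for \emph{all} permissible $\alpha$ (i.e.\ lies in $U(2,d,4)$) requires ruling out destabilizing coherent subsystems $(F,W)$ of every rank and every section-dimension as $\alpha$ varies across all critical values in $(0,d/(2-4))$—but $n-k=-2<0$ so the range is all of $\alpha>0$, and one must show no flip occurs. The argument should run: a rank-$1$ subsystem $(L',W)$ with $\dim W=w$ would give a line subbundle with $w$ sections and degree $\deg L'$; $\alpha$-semistability bounds $\deg L' + \alpha w \le d/2 + 2\alpha$, and one shows that for $d$ below $d_2-1$ the only way to satisfy this for all $\alpha$ near the relevant critical values forces $(E,V)$ to in fact be strictly stable, using the Petri bound on how many sections a line bundle of that degree can have. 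The existence tail of (ii), that $U(2,d',4)\ne\emptyset$ for $d\le d'\le d_2-1$, I would get by elementary transformations starting from a single non-empty case near $d_2-1$ (or from part (i) run downward where possible), checking stability is preserved—this is where the detailed estimates, deferred to \cite{GMN}, actually live.
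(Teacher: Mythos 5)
There is a genuine gap, and it starts with rank bookkeeping in part (i). If $L$ is a line bundle and $V\subseteq H^0(L)$ has $\dim V=4$ and generates $L$, then $D_V(L)^*$ is the kernel of $V\otimes\mathcal{O}\to L$ and so has rank $4-1=3$: your construction produces coherent systems of type $(3,d,4)$, not $(2,d,4)$. The dual span pairs type $(m,d,n+m)$ with type $(n,d,n+m)$, so for $(2,d,4)$ the dual-span partner is again of type $(2,d,4)$ and there is nothing to bootstrap from; Lemma \ref{lem5} and the Butler-type stability statements simply do not apply here. The paper's part (i) is instead a direct construction: take $L_1\not\cong L_2$ of degree $\frac{d_2-1}{2}$ with $h^0(L_i)=2$ (these exist on a Petri curve by classical Brill--Noether theory), set $V=H^0(L_1)\oplus H^0(L_2)$, and pass to an elementary transformation $0\to L_1\oplus L_2\to E\to T\to 0$ with $T$ torsion of length $d-d_2+1$; stability of the general such $E$ is Mercat's Th\'eor\`eme A.5, and $\alpha$-stability of $(E,V)$ for all $\alpha$ is then easy. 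Your reading of the ``moreover'' clause of (ii) is also off: the induction must run \emph{upward} from the hypothesized degree $d$ to $d_2-1$ by elementary transformations (checking that the transform stays stable), conditional on the existence of a semistable object at degree $d$; there is no unconditional non-empty case ``near $d_2-1$'' to start from --- indeed (iii) shows these moduli spaces are often empty there.

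For the rest, your instincts point in the right direction but the decisive steps are missing. In (ii) the whole argument rests on one fact you only gesture at: on a Petri curve every line bundle of degree $<\frac{d_2-1}{2}$ has $h^0\le1$. This forces $E$ to be stable (otherwise the quotient line bundle $E/F$ with $\deg F\ge d/2$ has at most one section, so the subsystem $(F,V\cap H^0(F))$ has $\dim W\ge3$ and violates $\alpha$-semistability for \emph{every} $\alpha$), and it then bounds every rank-one subsystem by $\dim W\le1$, which gives $\alpha$-stability for all $\alpha>0$ at once --- no flip analysis is needed. In (iii) your Grassmannian/Pl\"ucker plan matches the paper's, but the content is the quantitative claim that $h^0(\det E')\ge5$: one uses $0\to N\to\wedge^2V\otimes\mathcal{O}\to\det E'\to 0$ together with $0\to(\det E')^*\to N\to E'\otimes D_V(E')^*\to 0$ and the fact that $E'$ and $D_V(E')$ are stable of the same slope to get $h^0(N)\le h^0(E'\otimes D_V(E')^*)\le1$, whence $h^0(\det E')\ge 6-1=5$ and $\beta(1,\deg E',5)\ge0$ gives $\deg E\ge\deg E'\ge\frac{4g}{5}+4$. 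Without identifying where the ``5'' comes from, the bound $\frac{4g}{5}+4$ cannot be derived.
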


\begin{proof} (i) Consider
\begin{equation*}0\rightarrow L_1\oplus L_2 \rightarrow E
\rightarrow T \rightarrow 0,\end{equation*}where the $L_i$ are line bundles with
$\deg(L_i)=\frac{d_2-1}{2}$, $h^0(L_i)=2$, $L_1\not\cong L_2$ and $T$ is a torsion sheaf of length $d-d_2+1$. By classical Brill-Noether theory, such line bundles always exist on a Petri curve.
By \cite[Th\'eor\`eme A.5]{Mer2}, the general such $E$ is stable. Now, it is easy to show
that $(E,V)\in U(2,d,4)$ where $V=H^0(L_1)\oplus H^0(L_2)$.

(ii) Let $(F,W)$ be a coherent subsystem of $(E,V)$ with
$\mbox{rk}F=1$, where $W=V\cap H^0(F)$. Suppose $E$ is not stable and
choose $F$ with $\deg F\geq \frac{d}{2}$. Then $\deg(E/F)\leq
\frac{d}{2}< \frac{d_2-1}{2}$. Hence, by classical Brill-Noether theory, $h^0(E/F)\leq 1$, so $\dim W\geq 3$.
This contradicts $\alpha$-semistability for all $\alpha$.

So $E$ is stable. Now, for any $(F,W)$, $\deg F<\frac{d}{2}$, so
$h^0(F)\leq 1$, hence $\dim W\leq 1$. So
\begin{equation*}\mu_{\alpha}(F,W)<\frac{d}{2}+\alpha
<\frac{d+4\alpha}{2},\end{equation*}for all $\alpha >0$, so
$(E,V)\in U(2,d,4)$.

For the last part, we proceed by induction; we need to prove that, if $d< d_2-1$, then
$$U(2,d,4)\ne\emptyset\Rightarrow U(2,d+1,4)\ne\emptyset.$$
So suppose $(E,V)\in U(2,d,4)$ and consider an elementary transformation
$$0\rightarrow  E\rightarrow E'\rightarrow T\rightarrow 0,$$
where $T$ is a torsion sheaf of length $1$.  Then $(E',V)\in U(2,d+1,4)$ if and only if $E'$ is stable. It is easy to see that $E'$ is semistable. If $E'$ is strictly semistable, then it possesses a line subbundle $L$ of degree $\frac{d+1}2$. Since $E$ is stable, $L\cap E$ must have degree $\frac{d-1}2$ and so, by classical Brill-Noether theory, $\dim(H^0(L)\cap V)\le1$. Hence $h^0(E/L)\ge3$, which is a contradiction since $\deg(E/L)=\frac{d+1}2\le\frac{d_2-1}2$. 

(iii) If $(E,V)\in G(\alpha ;2,d,4)$ with $d< min \{ \frac{4g}{5}+4 ,d_2-1 \}$, we have $(E,V)\in U(2,d,4)$ by (ii). It is easy to check that $(E,V)$ is generically generated and the proof of (ii) shows that the subsheaf $E'$ of $E$ generated by
$V$ is stable. We have an exact sequence
\begin{equation*}0\rightarrow D_V(E')^{\ast} \rightarrow V\otimes
\mathcal{O} \rightarrow E' \rightarrow 0,
\end{equation*}
which induces two further exact sequences 
\begin{equation}\label{eqn5}0\rightarrow N \rightarrow \wedge^2 V\otimes
\mathcal{O} \rightarrow \det E' \rightarrow 0\end{equation} and
\begin{equation}\label{eqn6}0\rightarrow (\det E')^{\ast} \rightarrow N \rightarrow E'\otimes D_V(E')^{\ast} \rightarrow 0.\end{equation}
Now $D_V(E')$ is stable of the same slope as $E'$, so, by (\ref{eqn6}), $$h^0(N)\le h^0(E'\otimes D_V(E')^{\ast})\leq 1.$$
So
$h^0(\det E')\geq 5$
by (\ref{eqn5}). Hence, by classical Brill-Noether theory,
\begin{equation*}\deg E\geq \deg E'\geq \frac{4g}{5}+4.
\end{equation*}\end{proof}

\begin{corollary}\label{cor5}If $\frac{4g}{5}+4>d_2-1$, then $d_0=d_2$. If
$\frac{4g}{5}+4 \leq d_2-1$, then $\frac{4g}{5}+4\leq d_0 \leq
d_2$. Moreover, in all cases, Model Theorem holds.
\end{corollary}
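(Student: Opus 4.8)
The plan is to determine $d_0$ from Theorem~\ref{th14} and then to deduce the three parts of Model Theorem from parts (i) and (ii) of that theorem. Since $(n,k)=(2,4)$, the condition $\alpha(n-k)<d$ in (\ref{conditions1}) reads $-2\alpha<d$ and holds automatically for $\alpha>0$, $d>0$; hence Theorem~\ref{th14}(i) gives $G(\alpha;2,d,4)\ne\emptyset$ for every $\alpha>0$ once $d\ge d_2$, so $d_0\le d_2$, while Theorem~\ref{th14}(iii) gives $d_0\ge\min\{\frac{4g}{5}+4,\,d_2-1\}$. If $\frac{4g}{5}+4\le d_2-1$, these two bounds combine to $\frac{4g}{5}+4\le d_0\le d_2$, which is the second assertion. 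If $\frac{4g}{5}+4>d_2-1$, we obtain only $d_0\in\{d_2-1,d_2\}$, so to conclude $d_0=d_2$ it remains to show $G(\alpha;2,d_2-1,4)=\emptyset$ for all $\alpha$.

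To establish this emptiness I would rerun the argument of the proof of Theorem~\ref{th14}(ii)--(iii) at the boundary degree $d=d_2-1$, which is even (as $d_2$ is odd for every $g$). Let $(E,V)\in G(\alpha;2,d_2-1,4)$. If $E$ had a line subbundle $F$ with $\deg F\ge\frac{d_2-1}{2}$, then $\deg(E/F)\le\frac{d_2-1}{2}$; since on the Petri curve $C$ the degree $\frac{d_2-1}{2}$ is the least one carrying a line bundle with $h^0\ge2$ and none of that degree has $h^0\ge3$, we would get $h^0(E/F)\le2$, hence $\dim(V\cap H^0(F))\ge2$, hence $\mu_\alpha(F,V\cap H^0(F))\ge\mu_\alpha(E,V)$ --- which contradicts $\alpha$-\emph{stability} (though not $\alpha$-semistability; this is exactly why the analogous conclusion is not available in the range $d<d_2-1$ treated in Theorem~\ref{th14}(ii)). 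Hence $E$ is stable, every line subbundle has degree $<\frac{d_2-1}{2}$ and thus $h^0\le1$, $(E,V)$ is generically generated, and the rank-$2$ subsheaf $E'\subseteq E$ generated by $V$ is stable (it has $h^0(E')\ge4$, whereas a destabilising line subbundle of $E'$ and its quotient would both have degree $<\frac{d_2-1}{2}$, hence $h^0\le1$, forcing $h^0(E')\le2$). Now the exact sequences (\ref{eqn5}) and (\ref{eqn6}) for $E'$, together with the stability of $E'$ and of $D_V(E')$, give $h^0(\det E')\ge5$ exactly as in the proof of Theorem~\ref{th14}(iii), whence $\deg E=\deg E'\ge\frac{4g}{5}+4$ by classical Brill--Noether theory, contradicting $\deg E=d_2-1<\frac{4g}{5}+4$. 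Therefore $d_0=d_2$ in this case.

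For Model Theorem it is enough to prove part (c), namely $U(2,d,4)\ne\emptyset$ for all $d\ge d_0$: the ``only if'' parts of (a) and (b) are immediate from the definition of $d_0$ and Lemmas~\ref{lem1} and~\ref{lem2}, while the ``if'' parts follow from (c) because any member of $U(2,d,4)$ provides a stable $E$ with $h^0(E)\ge4$ that is $\alpha$-stable for every $\alpha$ satisfying (\ref{conditions1}). For $d\ge d_2$, (c) is Theorem~\ref{th14}(i). For $d_0\le d\le d_2-1$: if $d_0<d_2-1$, a member of the non-empty $G(\alpha;2,d_0,4)$ lies in $U(2,d_0,4)$ by the first assertion of Theorem~\ref{th14}(ii), and the ``Moreover'' clause there propagates non-emptiness up to degree $d_2-1$; if $d_0=d_2-1$, the argument of the previous paragraph (now with $\frac{4g}{5}+4\le d_2-1$, so that no contradiction arises) shows that any member of the non-empty $G(\alpha;2,d_2-1,4)$ has $E$ stable, and the subsystem estimates, valid for all $\alpha>0$, place it in $U(2,d_2-1,4)$; and if $d_0=d_2$ there is nothing to check. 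Hence $U(2,d,4)\ne\emptyset$ for all $d\ge d_0$, and Model Theorem follows.

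The step I expect to be the main obstacle is the emptiness of $G(\alpha;2,d_2-1,4)$ in the range $\frac{4g}{5}+4>d_2-1$: one must verify carefully that the numerical estimates in the proofs of Theorem~\ref{th14}(ii)--(iii), arranged there for $d<d_2-1$ and for $\alpha$-semistable systems, still produce a stable bundle $E'$ with $h^0(\det E')\ge5$ when one works at the boundary degree $d=d_2-1$ with $\alpha$-stability in place of $\alpha$-semistability.
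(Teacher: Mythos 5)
Your proposal is correct and follows essentially the same route as the paper: both derive the bounds on $d_0$ from Theorem \ref{th14}(i) and (iii), exclude $d_0=d_2-1$ in the first case by re-running the argument of part (iii) at the boundary degree, and obtain Model Theorem by adapting the argument of part (ii) at $d=d_2-1$ (where $h^0(E/F)\le2$ forces $\dim W\ge2$, contradicting $\alpha$-stability rather than semistability). Your write-up simply makes explicit the boundary-case verifications that the paper's proof leaves as ``the argument used to prove part (iii)\ldots'' and ``the rest of the proof works.''
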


\begin{proof}
The second part follows immediately from the theorem. If $\frac{4g}5+4>d_2-1$, then certainly $d_2-1\le d_0\le d_2$. In fact, if $(E,V)\in G(\alpha;2,d_2-1,4)$, the argument used to prove part (iii) of the theorem shows that $\deg E\ge\frac{4g}5+4$, which is a contradiction.

Model Theorem is now clear except possibly when $d_0=d_2-1$. But in this case, there exists $(E,V)\in G(\alpha;2,d_2-1,4)$ for some $\alpha$. If $E$ is not stable, then, in the proof of part (ii) of the theorem, we obtain $h^0(E/F)\le2$, so $\dim W\ge2$, contradicting $\alpha$-stability. The rest of the proof works to show that $(E,V)\in U(2,d_2-1,4)$.\end{proof}

\begin{remark}\begin{em}
Theorems \ref{th13} and \ref{th14}(i) and the first statement of Corollary \ref{cor5} fail for $g=2$. In this case $d_2=5$  and the proof of Theorem \ref{th14}(i) fails because there is only one line bundle $L$ of degree $2$ with $h^0(L)=1$. Moreover it follows from Riemann-Roch that any stable bundle $E$ of rank $2$ and degree $5$ has $h^0(E)=3$ and one can easily deduce that $G(\alpha;2,5,4)=\emptyset$ for all $\alpha>0$. In fact $d_0=6$ and Model Theorem holds (see statement (3) preceding Lemma 6.6 in \cite{Br}).\end{em}\end{remark}
We finish this section with an example

\begin{example}\begin{em}\label{ex2} If $4\le g\le8$ and $g$ is even, Corollary \ref{cor5} gives $d_0=d_2=g+3$.  Suppose now that $g=10$. Then $\frac{4g}{5}+4=12=g+2=d_2-1$, so $d_0=12$ or
$13$.

Suppose $(E,V)\in G(\alpha;2,12,4)$. The argument in the proof of Theorem \ref{th14}(ii)/(iii)
shows that $E'$ and $D(E')$ are stable and $h^0(\det E')\geq 5$.
By classical Brill-Noether theory, $\deg E'\geq 12$, so $E'=E$. Also $h^0(\det E)=5$.

Write $L=\det E$. We can choose a $3$-dimensional subspace $W$ of
$H^0(L)$ such that there is an exact sequence
\begin{equation*}0\rightarrow E^{\ast} \rightarrow W\otimes
\mathcal{O} \rightarrow L \rightarrow 0,
\end{equation*}which induces
\begin{equation*}0\rightarrow H^0(L\otimes E^{\ast}) \rightarrow W\otimes
H^0(L) \stackrel{\psi}{\rightarrow} H^0(L^{\otimes 2}) \rightarrow 0.
\end{equation*} Now $L\otimes E^{\ast}\cong E$, so
$h^0(L\otimes E^*)\ge4$. Hence $\dim\mbox{Ker}\,\psi\ge4$, from which it follows that the linear map 
\begin{equation}\label{eqn7}S^2(H^0(L))\rightarrow H^0(L^{\otimes 2})
\end{equation} 
is not injective. Both of the spaces in (\ref{eqn7}) have dimension 15, so (\ref{eqn7}) is not surjective. On the other hand, Voisin has proved that, for general $C$, (\ref{eqn7}) is surjective \cite{Voi}.
So, for general $C$, $G(\alpha;2,12,4)=\emptyset$ and $d_0=13$.

If (\ref{eqn7}) is not surjective, the sections of $L$ determine an embedding $C\hookrightarrow
\mathbb{P}^4$ whose image lies on the quadric $q$ whose equation generates the kernel of (\ref{eqn7}).

If  $q$ has rank $5$, it is easy to check that $(E,V)$ exists. This situation does in fact arise for certain Petri curves lying on K3 surfaces (see \cite{Voi}), so there exist Petri curves for which $d_0=12$. If $q$ has rank $4$, it is possible that $(E,V)$ is strictly $\alpha$-semistable for all $\alpha>0$, so we can make no deduction about $d_0$. The quadric $q$ cannot have rank $<4$.

Further details of this example will appear in \cite{GMN}.\end{em}\end{example}

\section{$k\ge n+2$}\label{sec8}

The situation in section \ref{sec7} is typical of that for $k\ge n+2$, except that in general we know even less. Until recently, no reasonable bound for $d_0$ had been established. This has now been rectified by Teixidor, who has obtained a bound for general $C$ by degeneration methods. We state her result in a slightly adapted version.

\begin{theorem}\label{th15}{\em(\cite{Te3})} Let $C$ be a general curve of genus $g\ge2$. If $k>n$, then $U(n,d,k)\ne\emptyset$ for $d\ge d_3$, where
\begin{equation}\label{eqn8}d_3:=
\begin{cases}k+n(g-1)-n\left[\frac{g-1}{[k/n]}\right]+1
&\text{if $n\mid k$}\\
k+n(g-1)-n\left[\frac{g-1}{[k/n]+1}\right] &\text{if $n\!\not\,\mid k$}.
\end{cases}\end{equation}
Moreover $U(n,d,k)$ has a component of dimension $\beta(n,d,k)$.
\end{theorem}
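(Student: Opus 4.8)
The plan is to follow Teixidor's degeneration strategy, which realizes coherent systems on a general smooth curve $C$ as limits of coherent systems on a reducible nodal curve $C_0$ whose components are rational curves (or low-genus pieces) joined at nodes, so that line bundles and their sections can be controlled combinatorially. First I would fix a one-parameter family $\mathcal{C}\to \Delta$ with smooth general fibre $C$ and special fibre $C_0$ a chain (or more elaborate tree) of elliptic/rational curves chosen so that $C_0$ has arithmetic genus $g$. On $C_0$ one builds a \emph{limit coherent system} of type $(n,d,k)$ by prescribing, on each component, a vector bundle together with a space of sections, glued along the nodes with prescribed vanishing; the numerology in $d_3$ is exactly what is needed so that the total degree $d$ can be distributed among the components with each piece carrying its ``expected'' number of sections, the floor functions $[\,(g-1)/[k/n]\,]$ etc.\ recording how the genus is partitioned. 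The two cases $n\mid k$ and $n\nmid k$ correspond to whether the sections can be spread evenly (roughly $[k/n]$ sections per ``copy'' of the generic rank-$n$ bundle) or whether one component must absorb the remainder.

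The key steps, in order, are: (1) construct the limit coherent system $(E_0,V_0)$ on $C_0$ explicitly, with $\dim V_0=k$ and $E_0$ of the right degree, arranging the gluing data generically; (2) check that $(E_0,V_0)$ is $\alpha$-stable for $\alpha$ large (equivalently lies in $U(n,d,k)$) by a component-by-component slope estimate — any destabilizing subsystem would restrict to each component to give too-large a slope somewhere, contradicting the genericity of the gluing; (3) compute the dimension of the space of such limit coherent systems and of the relevant $\mathrm{Ext}^1$/deformation space, showing it equals $\beta(n,d,k)$, so that the limit is a smooth point of the expected dimension on $C_0$; (4) invoke semicontinuity/properness for the relative moduli space $\mathcal{G}(\alpha;n,d,k)\to\Delta$ of $\alpha$-stable coherent systems to deduce that a neighbourhood of $(E_0,V_0)$ deforms to the general fibre, giving a component of $U(n,d,k)$ on $C$ of dimension $\beta(n,d,k)$; by Lemma \ref{lem3} this component cannot have dimension smaller than $\beta$, so it has dimension exactly $\beta(n,d,k)$.

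The main obstacle is step (2) together with the bookkeeping in step (3): one must show that the \emph{generic} gluing of the chosen component data yields an $\alpha$-stable (not merely $\alpha$-semistable) coherent system, and that there are no unexpected sections forcing the parameter count up. This is where the precise form of $d_3$ enters — it is the threshold below which the naive distribution of degree over the chain forces some component to carry more sections than a general bundle of that degree and rank admits, breaking either stability or the dimension count. Controlling the sections on $C_0$ requires the gluing conditions at the nodes to impose independent linear conditions, which in turn uses that the curve is \emph{general} (so each component can itself be taken general, and the nodes in general position). Once the limit object is shown to be $\alpha$-stable of the expected dimension, the transfer to the general smooth curve is formal: properness of $\widetilde{G}$ over $\Delta$ plus Lemma \ref{lem3} forces the specialization to be dominant onto a genuine component of dimension $\beta(n,d,k)$, and $\alpha$-stability is an open condition, so $U(n,d,k)\ne\emptyset$ on $C$ for every $d\ge d_3$.
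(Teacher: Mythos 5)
The paper itself gives no proof of Theorem \ref{th15}: it is quoted from Teixidor's paper \cite{Te3}, with only the remark that it is obtained ``by degeneration methods'' and that her argument actually yields the stronger conclusion $U(n,d,k)\ne\emptyset$. Your outline correctly identifies that strategy --- degeneration to a reducible nodal curve, construction of a limit coherent system component by component, and a smoothing/semicontinuity argument back to the general fibre --- so as a roadmap it is aligned with the actual source.

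As a proof, however, there are genuine gaps, and they sit exactly where the mathematical content of \cite{Te3} lies. First, the central object, the limit coherent system on $C_0$, is never constructed: you do not specify the component degrees, the section spaces, or the vanishing orders at the nodes, and you do not derive the threshold $d_3$ of (\ref{eqn8}) from any such data --- saying that the floor functions ``record how the genus is partitioned'' describes what the answer looks like rather than where it comes from, and in particular does not explain the asymmetry between the cases $n\mid k$ and $n\nmid k$. Second, your step (2), that the generic gluing yields an $\alpha$-stable (not merely semistable) object with $E$ stable, is the hard part and is asserted rather than proved; a ``component-by-component slope estimate'' does not obviously control subsystems that are not direct sums of data supported on the components. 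Third, the transfer to the general fibre is not as formal as you claim: a limit linear series or limit coherent system on $C_0$ is not a point of the relative moduli space $\widetilde{G}(\alpha;n,d,k)\to\Delta$, since on the reducible fibre one must work with the whole family of twists by components, so one cannot simply invoke properness of $\widetilde{G}$ over $\Delta$ together with openness of stability. The smoothing step requires the dedicated machinery of limit linear series in higher rank --- in practice a dimension count showing that the space of limit objects on $C_0$ has exactly the expected dimension $\beta(n,d,k)$, which is your step (3) and is likewise not carried out. Until those three items are supplied, the argument is a plausible plan rather than a proof.
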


It should be noted that this is not best possible; in particular, when $n=2$, $k=4$, it is weaker than Theorem \ref{th13}. Also, Teixidor states her result in terms of non-emptiness of all $G(\alpha;n,d,k)$, but her proof gives the stronger statement that $U(n,d,k)\ne\emptyset$. For other formulations of (\ref{eqn8}), see \cite{Te4, Mer2}, where the corresponding result for Brill-Noether loci is proved.

For an arbitrary smooth curve, we have the following theorem of Ballico \cite{Ba4}, obtained from Theorem \ref{th15} by a specialization argument. 

\begin{theorem}\label{th16}
Let $C$ be a smooth curve of genus $g$ and $k>n$. If $\gcd(n,d,k)=1$ and $d\ge d_3$, then $U^s(n,d,k)\ne\emptyset$. If moreover $\gcd(n,d)=1$, then $U(n,d,k)\ne\emptyset$.
\end{theorem}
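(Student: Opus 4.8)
The plan is to deduce the result from Theorem \ref{th15} by a specialization (degeneration) argument, using the coprimality hypothesis to control how $\alpha$-stability behaves in a family. First I would fix $n$, $d$, $k$ with $k>n$, $\gcd(n,d,k)=1$ and $d\ge d_3$, and fix a non-critical value $\alpha$ (large, say $\alpha>\alpha_L$, so that we are working with $G_L(n,d,k)$; the critical values are finite in number, so such $\alpha$ exists). By Theorem \ref{th15} there is a general curve $C_0$ of genus $g$ on which $U(n,d,k)\ne\emptyset$, and indeed $G(\alpha;n,d,k)$ has a component of the expected dimension $\beta(n,d,k)$ consisting generically of coherent systems with underlying bundle stable. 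The key point is that, since $\gcd(n,d,k)=1$, $\alpha$-stability and $\alpha$-semistability coincide for type $(n,d,k)$ at a non-critical $\alpha$: any proper coherent subsystem $(F,W)$ of type $(n',d',k')$ with $\mu_\alpha(F,W)=\mu_\alpha(E,V)$ would force $n'(d+\alpha k)=n(d'+\alpha k')$, and since $\alpha$ is non-critical this gives $n'd=nd'$ and $n'k=nk'$, i.e.\ $(n',d',k')$ is a proper multiple of a common divisor of $(n,d,k)$, contradicting coprimality. Hence $\widetilde G(\alpha;n,d,k)=G(\alpha;n,d,k)$ is projective, and there is a relative moduli space $\mathcal G\to S$ over a base $S$ parametrizing smooth curves of genus $g$, which is proper over $S$.

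Next I would put $C$ and $C_0$ in such a family: choose a connected base $S$ (a curve, or an affine variety) with two points $s, s_0\in S$ whose fibres are $C$ and $C_0$ respectively. Properness of $\mathcal G\to S$, together with the fact that the fibre over $s_0$ is non-empty, shows that $\mathcal G\to S$ is dominant onto the component of $S$ containing $s_0$; but more is needed, namely that the fibre over $s$ is non-empty. For this I would use that the component of $\mathcal G$ dominating $S$ has relative dimension $\beta(n,d,k)\ge 0$ over $S$ (this is where Theorem \ref{th15} gives a component of exactly the expected dimension, so the component genuinely dominates $S$ rather than collapsing into special fibres), and properness then forces every fibre, in particular the one over $s$, to be non-empty. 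Thus $G(\alpha;n,d,k)\ne\emptyset$ on $C$. Because $\alpha$-semistability equals $\alpha$-stability here, every such coherent system is $\alpha$-stable, and since $\alpha$ was an arbitrary non-critical large value — and the analogous argument runs for any non-critical $\alpha$ — we obtain a coherent system on $C$ that is $\alpha$-stable for all $\alpha$ with $\alpha>0$, $\alpha(n-k)<d$ (the finitely many critical values being handled by the fact that $\alpha$-stability on an open-and-closed set of $\alpha$ propagates; for $k>n$ one only needs $\alpha>0$). This gives $U^s(n,d,k)\ne\emptyset$.

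For the final sentence: if in addition $\gcd(n,d)=1$, then by Lemma \ref{lem2} (applied with a large, hence also $\alpha_0$-adjacent argument, or directly) the underlying bundle $E$ of a member of $U^s$ is semistable, and $\gcd(n,d)=1$ forces $E$ stable; thus the coherent system lies in $U(n,d,k)$, so $U(n,d,k)\ne\emptyset$. Alternatively, one runs the same specialization argument with the locus $U(n,d,k)\subseteq G_L(n,d,k)$, which is open, and uses that on $C_0$ its closure meets the expected-dimensional component, combined with properness of $M(n,d)$ when $\gcd(n,d)=1$.

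I expect the main obstacle to be the second paragraph: making precise that the expected-dimensional component provided by Theorem \ref{th15} actually dominates the base $S$, so that properness yields non-emptiness of \emph{every} fibre and not merely of the fibres in a neighbourhood of $s_0$. This requires knowing that the relative moduli space is flat (or at least that the component has pure relative dimension $\beta(n,d,k)$) over $S$; the input is exactly Lemma \ref{lem3} (every component has dimension $\ge\beta$) together with upper-semicontinuity of fibre dimension, forcing the dominating component to have fibres of dimension exactly $\beta$ over a dense open, hence (by properness and connectedness) non-empty fibres everywhere. A secondary subtlety is the passage from ``$\alpha$-stable for one large non-critical $\alpha$'' to ``$\alpha$-stable for all admissible $\alpha$'': for $k>n$ one uses that $\alpha$-stability is constant on each interval $(\alpha_i,\alpha_{i+1})$ and that, by the coprimality hypothesis, no coherent system of type $(n,d,k)$ can be strictly $\alpha$-semistable at a critical value either, so the stability locus is in fact independent of $\alpha$ — this is essentially the content already exploited in the first paragraph.
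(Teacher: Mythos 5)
Your overall strategy --- specialize from Theorem \ref{th15} on a general curve to an arbitrary smooth curve, using $\gcd(n,d,k)=1$ to ensure that at a non-critical $\alpha$ there are no strictly $\alpha$-semistable coherent systems, so that $G(\alpha;n,d,k)=\widetilde G(\alpha;n,d,k)$ and the relative moduli space is proper over the base --- is exactly the route the paper indicates (it only cites Ballico \cite{Ba4} and describes the proof as ``a specialization argument'' from Theorem \ref{th15}). Your coprimality computation at non-critical $\alpha$ is correct, and so is your deduction of the last sentence from the first (Lemma \ref{lem2} gives $E$ semistable, and $\gcd(n,d)=1$ upgrades this to stable, so $U^s\subseteq U$). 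In the second paragraph, the assertion that properness plus non-emptiness of the fibre over $s_0$ already gives dominance is false, but you essentially repair this yourself: the component of exact dimension $\beta(n,d,k)$ from Theorem \ref{th15}, combined with Lemma \ref{lem3} and the fibre-dimension theorem, forces the component of the relative moduli space containing it to dominate $S$, and properness then gives surjectivity onto $S$.

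The genuine gap is the passage from ``$G(\alpha;n,d,k)\ne\emptyset$ on $C$ for each non-critical $\alpha$'' to ``$U^s(n,d,k)\ne\emptyset$ on $C$''. You justify it by claiming that, because $\gcd(n,d,k)=1$, no coherent system of type $(n,d,k)$ can be strictly semistable at a critical value either, so that ``the stability locus is independent of $\alpha$''. This is false: coprimality only excludes subsystems with $d'/n'=d/n$ and $k'/n'=k/n$, i.e.\ those that destabilize for \emph{every} $\alpha$. At a critical value $\alpha_i$, strict semistability arises from subsystems with $k'/n'\ne k/n$, which $\gcd(n,d,k)=1$ does nothing to exclude; if the stable locus were independent of $\alpha$ in the coprime case there would be no flips and $G_0=\cdots=G_L$ always, contradicting the wall-crossing theory of \cite{BGMN} and, for instance, Example \ref{ex}. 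Consequently, exhibiting one element of $G(\alpha)$ in each chamber separately does not produce a single $(E,V)$ that is $\alpha$-stable for all admissible $\alpha$. The tool you actually need is the convexity in $\alpha$ of the stability condition for a \emph{fixed} $(E,V)$ (each inequality $\mu_\alpha(F,W)<\mu_\alpha(E,V)$ is affine in $\alpha$), which gives $U^s=G_0\cap G_L$ for $k>n$; the specialization must then be arranged so that the limit lies in this intersection --- e.g.\ by controlling the underlying bundle, since $E$ stable and $(E,V)\in G_L$ imply $(E,V)\in U^s$ by Lemma \ref{lem2} and convexity. Your argument as written supplies no such control on the limit, so the first assertion of the theorem is not yet proved.
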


When $n\mid k$, we can prove the following theorem  without any coprimality assumptions by using the methods of \cite{Mer2}.

\begin{theorem}\label{th17} Let $C$ be a smooth curve of genus $g\ge2$. If $k>n$, $n\mid k$, then $U(n,d,k)\ne \emptyset$ for $d\ge d_3$. If in addition $C$ is Petri and $k':=\frac{k}n\mid g$, then $U(n,d,k)\ne \emptyset$ for $d\ge d_3-n$, provided that
$$n\le n':= g!\prod^{k'-1}_{i=0}\frac{i!}{\left(i+\frac{g}{k'}\right)!}.$$
\end{theorem}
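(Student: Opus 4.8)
The plan is to follow the elementary--transformation method used in the proof of Theorem \ref{th14}(i), starting this time from a direct sum of $n$ linear series. Suppose we have pairwise non-isomorphic line bundles $L_1,\ldots,L_n$ of a common degree $d'$ with $h^0(L_i)\ge k'$, and fix subspaces $V_i\subseteq H^0(L_i)$ with $\dim V_i=k'$. Put $E_0:=\bigoplus_{i=1}^n L_i$ and $V_0:=\bigoplus_{i=1}^n V_i$, so $(E_0,V_0)$ is a coherent system of type $(n,nd',k)$. For every $d\ge nd'+1$ take a general elementary transformation
$$0\to E_0\to E\to T\to 0$$
with $T$ torsion of length $d-nd'$; since $V_0\subseteq H^0(E_0)\subseteq H^0(E)$, the pair $(E,V_0)$ is a coherent system of type $(n,d,k)$.

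First I would show that $(E,V_0)\in U(n,d,k)$ whenever $E$ is stable. It is enough to consider coherent subsystems $(F,W)$ with $F$ a subbundle and $W=V_0\cap H^0(F)$ maximal. For $r:=\mbox{rk}\,F<n$ the key point is the estimate $\dim W\le rk'$: indeed $W\subseteq V_0\cap H^0(E_0\cap F)$, and $E_0\cap F$ is a rank-$r$ subsheaf of $\bigoplus_{i=1}^n L_i$, so projecting successively onto the summands $L_i$ and inducting on $n$ --- using that the image of $W$ in $H^0(L_i)$ lies in $V_i$ --- gives the bound. Granting it, stability of $E$ gives $\deg F/r<d/n$, hence for every $\alpha>0$
$$\mu_\alpha(F,W)\le\frac{\deg F}{r}+\alpha k'<\frac{d}{n}+\alpha\frac{k}{n}=\mu_\alpha(E,V_0),$$
while the case $\mbox{rk}\,F=n$ (so $F=E$ and $W\subsetneq V_0$) is immediate. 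Hence $(E,V_0)$ is $\alpha$-stable for all $\alpha>0$, so it lies in $G_L(n,d,k)$ and, $E$ being stable, in $U(n,d,k)$.

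The remaining ingredient is that a general $E$ arising from the elementary transformation is stable; this is precisely \cite[Th\'eor\`eme A.5]{Mer2} (as used for Theorem \ref{th14}(i)), needing the $L_i$ distinct of equal degree and $d-nd'\ge1$. It then remains to choose $d'$ so that the range $d\ge nd'+1$ matches the statement. For the first assertion I would take $d'=e_1:=g+k'-1-\left[\frac{g-1}{k'}\right]$; for this value the classical Brill--Noether number $\rho:=g-k'(g-d'+k'-1)$ satisfies $\rho\ge1$, so by the Brill--Noether existence theorem (valid on any smooth curve) the line bundles of degree $e_1$ with $h^0\ge k'$ form a family of positive dimension, in particular are infinite in number, which supplies $L_1,\ldots,L_n$; since $ne_1=d_3-1$, this gives $U(n,d,k)\ne\emptyset$ for $d\ge d_3$. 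For the second assertion, when $k'\mid g$ I would take $d'=e_0:=g+k'-1-\frac{g}{k'}$, the least degree with $\rho\ge0$ (now $\rho=0$); on a Petri curve the line bundles of degree $e_0$ with $h^0\ge k'$ are finite in number equal to the Castelnuovo count $g!\prod_{i=0}^{k'-1}\frac{i!}{\left(i+\frac{g}{k'}\right)!}=n'$, each with $h^0=k'$, so $n\le n'$ supplies $n$ pairwise non-isomorphic $L_i$; since $ne_0=d_3-n-1$, this gives $U(n,d,k)\ne\emptyset$ for $d\ge d_3-n$.

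The main obstacle is the stability of the general elementary transformation of $\bigoplus_{i=1}^n L_i$; this is where the genericity of $T$ is essential and where the cited theorem of Mercat does the real work, the case $\mbox{length}\,T=1$ (i.e.\ $d=d_3$, resp.\ $d=d_3-n$) being the most delicate. Everything else is either a direct appeal to \cite{Mer2} or bookkeeping, but one should still check carefully the classical input used for the second assertion --- that for $\rho=0$ on a Petri curve the line bundles of degree $e_0$ with $h^0\ge k'$ are finite in number, equal to the Castelnuovo count $n'$, each with $h^0=k'$ (cf.~\cite{ACGH}), which is exactly where the hypothesis $n\le n'$ is used --- together with the arithmetic identities $ne_1=d_3-1$ and, when $k'\mid g$, $ne_0=d_3-n-1$.
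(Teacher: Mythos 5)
Your proposal is correct and follows essentially the same route as the paper: the same elementary transformation of $\bigoplus L_i$ with $\deg L_i=\frac{d_3-1}{n}$ (resp.\ $\frac{d_3-1}{n}-1$ in the Petri case, where the Castelnuovo count supplies the $n$ distinct $L_i$), stability of the general $E$ via \cite[Th\'eor\`eme A.5]{Mer2}, and the observation that stability of $E$ together with the polystable direct-sum structure of $(\bigoplus L_i,\bigoplus V_i)$ forces $(E,V)\in U(n,d,k)$. The only difference is that you spell out the subsystem bound $\dim W\le rk'$, which the paper leaves implicit.
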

\begin{proof} Note that $d_3-1$ is divisible by $n$ and $\beta\left(1,\frac{d_3-1}n,\frac{k}n\right)>0$. Hence, by classical Brill-Noether theory, we can find $n$ non-isomorphic line bundles $L_i$ with $\deg L_i=\frac{d_3-1}n$ and $h^0(L_i)\ge\frac{k}n$. Choose subspaces $V_i$ of $H^0(L_i)$ of dimension $\frac{k}n$ and consider an exact sequence
\begin{equation}\label{eqn9}
0\rightarrow L_1\oplus \ldots \oplus L_n\rightarrow E\rightarrow T\rightarrow 0,
\end{equation}
where $T$ is a torsion sheaf of length $d-d_3+1>0$. Let $V:=\bigoplus H^0(L_i)\subseteq H^0(E)$. Then $(L_1\oplus\ldots\oplus L_n,V)$ is $\alpha$-semistable for all $\alpha>0$. If $E$ is stable, it follows that $(E,V)\in U(n,d,k)$. On the other hand, the general $E$ given by (\ref{eqn9}) is stable by \cite[Th\'eor\`eme A.5]{Mer2}. The first result follows.

Now suppose $k'\mid g$. Then $\left[\frac{g-1}{k'}\right]=\frac{ng}k-1$. Let
$$d':=\frac{d_3-1}n-1=k'+g-1-\frac{g}{k'}$$
and $\beta(1,d',k')$=0. So, again by classical Brill-Noether theory, there exist line bundles $L_i$ with $\deg L_i=d'$ and $h^0(L_i)\ge k'$, and, if $C$ is Petri, there are precisely $n'$ such line bundles up to isomorphism \cite[V (1.2)]{ACGH}. The proof now goes through as before.
\end{proof}

\begin{remark}\label{rk5}\begin{em} It is worth noting that, when $n\mid k$ and $k'\mid g$, 
$$\beta(n,d_3-n,k)=k-n^2+1,$$ so one can have $U(n,d,k)\ne\emptyset$, even on a Petri curve, with negative Brill-Noether number.
\end{em}\end{remark}

For a lower bound on $d_0$, we have the following result of Brambila-Paz.

\begin{theorem}\label{th18}{\em(\cite[Theorem 1]{Br})} Let $C$ be a Petri curve of genus $g$. Suppose $k>n$ and $\beta(n,d,n+1)<0$. Then $G(\alpha;n,d,k)=\emptyset$ for all $\alpha>0$.
\end{theorem}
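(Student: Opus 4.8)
The plan is to argue by induction on $k$, reducing everything to the case $k=n+1$, which is exactly Theorem~\ref{th6} (equivalently, by Corollary~\ref{cor2}, the statement that $G(\alpha;n,d,n+1)\ne\emptyset$ for some $\alpha$ forces $\beta(n,d,n+1)\ge 0$; for $n=1$ this is classical Brill-Noether theory on the Petri curve $C$). The conceptual point behind the induction is that the hypothesis $\beta(n,d,n+1)<0$ is, after a short computation from the definition of $\beta$, equivalent to $\frac dn<1+\frac g{n+1}$, and this property is inherited by every pair $(n',d')$ with $n'\le n$ and $\frac{d'}{n'}\le\frac dn$: indeed then $\frac{d'}{n'}\le\frac dn<1+\frac g{n+1}\le 1+\frac g{n'+1}$, so $\beta(n',d',n'+1)<0$ as well.

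So suppose, for a contradiction, that $(E,V)\in G(\alpha;n,d,k)$ for some $\alpha>0$ with $k\ge n+2$. First I would delete one section: choose a generic hyperplane $V'\subset V$, giving a coherent system $(E,V')$ of type $(n,d,k-1)$, and show that it is $\alpha$-semistable (the hard step, discussed below). Granting this, take a Jordan-H\"older filtration of $(E,V')$ and let $(E_j,V_j)$, of type $(n_j,d_j,k_j)$, be the $\alpha$-stable graded piece for which $k_j/n_j$ is largest. All graded pieces share the $\alpha$-slope $\mu_\alpha(E,V')=\frac dn+\alpha\frac{k-1}n$, so maximising $k_j/n_j$ is the same as minimising $d_j/n_j$; hence $\frac{d_j}{n_j}\le\frac{\sum d_j}{\sum n_j}=\frac dn$, and by the mediant inequality $\frac{k_j}{n_j}\ge\frac{\sum k_j}{\sum n_j}=\frac{k-1}n>1$, so $k_j\ge n_j+1$. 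Now $n_j\le n$ and $\frac{d_j}{n_j}\le\frac dn$, so by the observation above $\beta(n_j,d_j,n_j+1)<0$. If $k_j=n_j+1$, then $(E_j,V_j)\in G(\alpha;n_j,d_j,n_j+1)$ contradicts Theorem~\ref{th6}. If $k_j>n_j+1$, then $n_j<k_j\le k-1<k$, so the inductive hypothesis applies to $(E_j,V_j)$ and again gives a contradiction. Hence $G(\alpha;n,d,k)=\emptyset$ for all $\alpha>0$.

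The main obstacle is the first step: proving that $(E,V')$ is $\alpha$-semistable for a generic hyperplane $V'\subset V$. Half of this is routine. If $(F,W')$ with $W'=V'\cap H^0(F)$ were an $\alpha$-destabilising subsystem and $V\cap H^0(F)\ne 0$, then for generic $V'$ one has $\dim W'=\dim(V\cap H^0(F))-1$, and combining the $\alpha$-stability of $(E,V)$ tested against the subsystem $(F,V\cap H^0(F))$ with the bound $\mbox{rk}\,F\le n$ yields $\mu_\alpha(F,W')<\mu_\alpha(E,V')$, a contradiction. What remains is the possibility of a destabilising subbundle $F$ with $V\cap H^0(F)=0$ whose slope lies in the narrow window $\bigl(\frac dn+\alpha\frac{k-1}n,\ \frac dn+\alpha\frac kn\bigr)$; such an $F$ is a proper subbundle of $E$ of slope $>\mu(E)$ carrying none of the sections of $V$. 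Ruling this out amounts to controlling the instability of the underlying bundle $E$. In the favourable situation where one can apply a flip (\cite{Th,BGMN}) to bring $\alpha$ close to $0$, Lemma~\ref{lem2} forces $E$ to be semistable and the difficulty disappears entirely; in general one must instead carry a bound on the Harder-Narasimhan type of $E$ through the induction, or perform the section-deletion at a non-critical value of $\alpha$ chosen to avoid the bad window and transfer the conclusion by a flip/specialisation argument. This analysis of the underlying bundle is the technical heart of the argument; the rest is the bookkeeping above.
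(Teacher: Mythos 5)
The survey gives no proof of this theorem; it is quoted from \cite[Theorem 1]{Br}, where the argument runs through generic generation and the dual span construction: an $\alpha$-stable $(E,V)$ with $k>n$ and $d$ in the range forced by $\beta(n,d,n+1)<0$ is shown, via a general subspace $V'$ of $V$ and the sequence $0\to D_{V'}(E')^*\to V'\otimes{\mathcal O}\to E'\to 0$ for the subsheaf $E'$ generated by $V'$, to produce a line bundle of low degree with too many sections, contradicting classical Brill-Noether theory on a Petri curve. This is the same mechanism used in this survey for Theorem \ref{th6} and Theorem \ref{th14}(iii). Your route --- induction on $k$ by deleting a generic section and passing to Jordan-H\"older factors --- is genuinely different, and its arithmetic bookkeeping is correct: $\beta(n,d,n+1)<0$ is equivalent to $\frac dn<1+\frac g{n+1}$, this is inherited by any factor with $n_j\le n$ and $\frac{d_j}{n_j}\le\frac dn$, and the mediant argument does produce an $\alpha$-stable factor with $k_j\ge n_j+1$.

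However, the step you flag yourself --- that $(E,V')$ is $\alpha$-semistable for a generic hyperplane $V'\subset V$ --- is a genuine gap, and it is where the entire difficulty of the theorem now sits. There are two failure modes, neither of which you close. First, a subbundle $F$ with $V\cap H^0(F)=0$ satisfies only $\mu(F)<\frac dn+\alpha\frac kn$ by $\alpha$-stability of $(E,V)$, so it destabilises $(E,V')$ whenever $\mu(F)$ lands in the window of width $\frac{\alpha}{n}$ below that bound; no choice of hyperplane helps, and nothing in the hypotheses controls the Harder-Narasimhan type of $E$ when $\alpha$ is large. Second, even for subsystems with $W=V\cap H^0(F)\ne0$, the bad hyperplanes for a given $F$ are those containing $W$, a locus of codimension $\dim W$ in $\mathbb{P}(V^*)$; the borderline subsystems form a bounded but possibly positive-dimensional family, and when $\dim W=1$ the union of these loci can cover all of $\mathbb{P}(V^*)$, so ``generic'' is not available without further argument. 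The remedies you sketch do not obviously work: Lemma \ref{lem2} constrains elements of $G_0$ but gives no way to transport a given $(E,V)\in G(\alpha;n,d,k)$ to small $\alpha$ (non-emptiness genuinely changes across walls --- that is the point of the flip analysis), and carrying a Harder-Narasimhan bound through the induction changes the statement being proved. Note also that your argument as written never uses the Petri hypothesis except through the base case; since the theorem is false without some generality assumption on $C$, any correct proof must bring that hypothesis to bear, which is exactly what the dual span reduction to classical Brill-Noether theory in \cite{Br} accomplishes.
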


Combining Theorems \ref{th15} and \ref{th18}, we obtain that, on a general curve,
$$n+g-\left[\frac{g}{n+1}\right]\le d_0(n,k)\le d_3.$$

\section{$n=2$, $\det E=K$}\label{sec9}

In this section we consider coherent systems $(E,V)$ with $\mbox{rk}\,E=2$, $\det E\cong K$. These have not to my knowledge been studied in their own right, but the corresponding problem for Brill-Noether loci has attracted quite a lot of attention and this has some implications for coherent systems. In what follows, we consistently denote the corresponding spaces by $B(2,K,k)$, $G(\alpha;2,K,k)$ etc.

The first point to note is that some definitions have to be changed. There is a canonical skew-symmetric isomorphism between $E$ and $E^*\otimes K$. As a result of this, the Petri map has to be replaced by the {\em symmetrized Petri map}
\begin{equation}\label{eqn10}
S^2H^0(E)\rightarrow H^0(S^2E).
\end{equation}
This map governs the infinitesimal behaviour of $B(2,K,k)$. Moreover, the Brill-Noether number giving the expected dimension of $B(2,K,k)$ and $G(\alpha;2,K,k)$ must be replaced by
$$\beta(2,K,k):=3g-3-\frac{k(k+1)}2.$$
All components of $B(2,K,k)$ have dimension $\ge\beta(2,K,k)$. The intriguing thing is that often $\beta(2,K,k)>\beta(2,2g-2,k)$, so that the expected dimension of $B(2,K,k)$ is greater than that of $B(2,2g-2,k)$, although the latter contains the former!

These Brill-Noether loci were studied some time ago by Bertram and Feinberg \cite{BF} and by Mukai \cite{Mu}, who independently proposed the following conjecture.

\begin{conj} Let $C$ be a general curve of genus $g$. Then $B(2,K,k)$ is non-empty if and only if $\beta(2,K,k)\ge0$. 
\end{conj}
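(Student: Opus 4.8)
The plan is to prove the two implications by entirely different means. The ``only if'' direction asks that, for a general curve $C$ and $\beta(2,K,k)<0$, the locus $B(2,K,k)$ be empty; since every component has dimension $\ge\beta(2,K,k)$, it would be enough to establish a \emph{symmetrized Gieseker--Petri theorem}: for $C$ general, the symmetrized Petri map (\ref{eqn10}) is injective for every stable bundle $E$ of rank $2$ with $\det E\cong K$ (and, for the statement about $G(\alpha;2,K,k)$, the analogous map attached to any subspace $V\subseteq H^0(E)$). Injectivity forces $B(2,K,k)$, and each $G(\alpha;2,K,k)$, to be smooth of dimension exactly $\beta(2,K,k)$ wherever it is non-empty, and hence empty once $\beta(2,K,k)<0$. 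The ``if'' direction requires, conversely, an explicit production of $(E,V)$ of type $(2,K,k)$ whenever $\beta(2,K,k)\ge0$.

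For non-emptiness I would start from the $K3$ picture of Bertram--Feinberg and Mukai: for suitable $g$ a general curve $C$ lies on a $K3$ surface $S$, a pencil $A$ on $C$ produces a Lazarsfeld--Mukai bundle on $S$ whose restriction to $C$ (suitably twisted) is a rank-$2$ bundle with canonical determinant carrying the required number of sections; one then checks stability and verifies, by a parameter count, that these bundles sweep out a family of the expected dimension $\beta(2,K,k)$. Because curves on $K3$ surfaces are general in $\mathcal{M}_g$ only for small $g$, the general case should instead be handled by degeneration: specialise $C$ to a chain of elliptic (or low-genus) curves, build a limiting rank-$2$ sheaf with dualising determinant by gluing bundles on the components compatibly with the skew-symmetric form, arrange the prescribed number of sections by a dimension count against $\beta(2,K,k)$, and deform back to the general curve by semicontinuity, using Lemma \ref{lem3} to ensure no component of $B(2,K,k)$ has dropped below the expected dimension in the limit.

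The emptiness half would itself be attacked by degeneration: specialise $C$ to a reducible nodal curve, replace $(E,V)$ by a limit coherent system (the rank-$2$, fixed-determinant analogue of a limit linear series), and reduce the injectivity of (\ref{eqn10}) to a combinatorial statement about the dual graph, the bundles on the components, and the gluing data, in the spirit of the Eisenbud--Harris proof of the classical Gieseker--Petri theorem, but carrying the self-duality $E\cong E^*\otimes K$ through every step.

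The hard part will be this emptiness direction. In classical Brill--Noether theory the Petri map is pinned to a single line bundle, whereas here, for fixed $k$, the bundles $E$ with $\det E\cong K$ and $h^0(E)\ge k$ already move in a positive-dimensional family before $C$ is allowed to vary, and the target $H^0(S^2E)$ is far less rigid than $H^0(K)$; controlling the rank of (\ref{eqn10}) uniformly over that family, and ruling out a drop on the nodal curve, is precisely where the classical arguments do not transcribe routinely. A secondary obstacle is that the $K3$ construction reaches general curves only for small $g$, so the non-emptiness half genuinely needs the degeneration argument for large $g$, and there one must check that the self-dual gluing data can always be chosen to yield a \emph{stable} bundle with the full expected number of sections.
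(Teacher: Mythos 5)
The statement you have been asked to prove is the Bertram--Feinberg--Mukai conjecture, and the paper does not prove it: it records it explicitly as an open \emph{Conjecture}, verified only for low genus by Bertram and Feinberg and by Mukai, and then surveys the partial results currently available. So there is no proof in the paper to compare against; the question is whether your plan would close the problem, and it does not. What your plan gets right is the division of labour. Your ``only if'' half --- a symmetrized Gieseker--Petri theorem for (\ref{eqn10}) proved by degeneration to a nodal curve --- is precisely Teixidor's Theorem \ref{th20}, which together with the dimension bound of Lemma \ref{lem3} yields Corollary \ref{cor7}; that half is in fact already a theorem, and your sketch is a fair description of how it is proved. The genuine gap is the ``if'' half. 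The strongest existence result available, Theorem \ref{th19}, requires roughly $g\ge k^2/4$, whereas $\beta(2,K,k)\ge0$ only requires $g\ge 1+\frac{k(k+1)}6$, so there is a substantial range of $(g,k)$ left uncovered; your proposal offers no mechanism for closing it beyond the hope that ``the self-dual gluing data can always be chosen'' in a degeneration, which is exactly the unresolved difficulty you yourself flag.

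There is also a concrete flaw in your non-emptiness strategy. The Lazarsfeld--Mukai construction on a K3 surface produces curves that are \emph{not} general for this particular problem, even when they are Petri: Remark \ref{rk6} records Voisin's observation that on Petri curves lying on K3 surfaces one has $B\left(2,K,\frac{g}2+2\right)\ne\emptyset$ for even $g$, although $\beta\left(2,K,\frac{g}2+2\right)<0$ once $g\ge10$; equivalently, Theorem \ref{th20} fails on such curves. So bundles obtained from K3 geometry cannot be assumed to persist on, or deform to, a general curve, and for $g\ge12$ curves on K3 surfaces do not dominate the moduli of curves in any case. The conjecture remains open; the paper's own assessment is that it would follow from extending Theorem \ref{th19} to all cases with $\beta(2,K,k)\ge0$, where the only reported progress is Park's unpublished homological work \cite{Park}.
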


This was verified by Bertram and Feinberg and Mukai for low values of $g$. Of course, if $B(2,K,k)\ne\emptyset$, then $G_0(2,K,k)\ne\emptyset$, but we cannot make deductions about $U(2,K,k)$ without further information.

Some of these results have been extended and there are also some general existence results due to Teixidor.

\begin{theorem}\label{th19}{\em(\cite{Te5})} Let $C$ be a general curve of genus $g$. If either $k=2k_1$ and $g\ge k_1^2\ (k_1>2)$, $g\ge 5\ (k_1=2)$, $g\ge3\ (k_1=1)$ or $k=2k_1+1$ and $g\ge k_1^2+k_1+1$, then $B(2,K,k)\ne\emptyset$ and has a component of dimension $\beta(2,K,k)$.
\end{theorem}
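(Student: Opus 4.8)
The plan is to produce, on a general curve of genus $g$, a stable bundle $E$ of rank $2$ with $\det E\cong K$ and $h^0(E)\ge k$, together with a component of $B(2,K,k)$ of the expected dimension $\beta(2,K,k)=3g-3-\frac{k(k+1)}2$. The natural construction is by extension: take $E$ as a nontrivial extension
\begin{equation*}
0\to L\to E\to K\otimes L^{-1}\to 0
\end{equation*}
with $L$ a line bundle of degree $g-1$, so that $\det E\cong K$ automatically and the skew-symmetric self-duality $E\cong E^*\otimes K$ is respected by the construction. One chooses $\deg L=g-1$ and $h^0(L)=h^0(K\otimes L^{-1})$ as large as classical Brill-Noether theory on the general curve allows; then $h^0(E)\ge h^0(L)+h^0(K\otimes L^{-1})$ when the extension can be arranged to lift all sections of the quotient, and one tunes $h^0(L)$ against the parity of $k$ (the two cases $k=2k_1$ and $k=2k_1+1$ in the statement) to hit exactly $k$. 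The hypotheses $g\ge k_1^2$ (resp. $g\ge k_1^2+k_1+1$) are precisely what guarantee the existence on a general curve of a line bundle $L$ of degree $g-1$ with $h^0(L)=k_1$ (resp.\ with the asymmetric split needed in the odd case), via $\rho(g,k_1-1,g-1)\ge0$.

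The key steps, in order, would be: \textbf{(1)} Identify the correct $L$ using classical Brill-Noether: for $k=2k_1$ set $h^0(L)=k_1$ (a theta-characteristic-like symmetric choice, with $K\otimes L^{-1}$ also having $k_1$ sections), and for $k=2k_1+1$ take an asymmetric pair $h^0(L)=k_1+1$, $h^0(K\otimes L^{-1})=k_1$. \textbf{(2)} Show that for a general extension class the full space $H^0(L)\oplus H^0(K\otimes L^{-1})$ maps into $H^0(E)$, i.e.\ the sections of the sub and quotient all survive; this is a coboundary computation — the obstruction to lifting a section $s\in H^0(K\otimes L^{-1})$ lies in $H^1(L)$ via cup product with the extension class in $\mathrm{Ext}^1(K\otimes L^{-1},L)=H^1(L^2\otimes K^{-1})$, and one must check that a general class kills the relevant image, or equivalently count dimensions so that $h^0(E)=k$ generically. \textbf{(3)} Prove stability of the general such $E$: a destabilizing sub-line-bundle $M$ has $\deg M\ge g$, hence $\deg(E/M)\le g-2<\deg L$... no — one argues instead that $M\hookrightarrow E$ together with the quotient $E\to K\otimes L^{-1}$ forces either $M\subseteq L$ (impossible for $\deg M\ge g>g-1$) or a nonzero map $M\to K\otimes L^{-1}$, and a dimension count on the space of extensions shows the locus of non-stable $E$ is a proper subvariety. \textbf{(4)} Compute the dimension of the resulting family and match it to $\beta(2,K,k)$: the parameters are the choice of $L$ (in a Brill-Noether locus of the expected dimension) plus the projectivized $\mathrm{Ext}^1$ modulo automorphisms, and one verifies this equals $3g-3-\frac{k(k+1)}2$, identifying a component of $B(2,K,k)$ of exactly that dimension. \textbf{(5)} Optionally confirm via the symmetrized Petri map \eqref{eqn10} that this component is generically smooth of the expected dimension.

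The main obstacle will be step (2) combined with step (4): ensuring that $h^0(E)$ is \emph{exactly} $k$ (not larger) for the general extension, and that the extension construction genuinely produces a full-dimensional component rather than something lying in the closure of a larger, more special locus. In other words the delicate point is the interplay between how many sections one forces to lift and the dimension of the extension space — one needs the generic extension to be "as non-special as possible" given the forced sections, which requires a careful analysis of the coboundary map $H^0(K\otimes L^{-1})\to H^1(L)$ and its dependence on the extension class. This is exactly the kind of base-point-free-pencil / general-position argument that works on a general curve but can fail on special curves, which is why the theorem is stated for $C$ general. The stability argument (step 3) and the dimension bookkeeping (step 4) are then comparatively routine given that the sections behave as expected.
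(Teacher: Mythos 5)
The survey gives no proof of this statement: it is quoted from \cite{Te5}, where Teixidor proves it by degeneration to a reducible nodal curve using limit linear series. Your proposal follows instead the extension strategy of Bertram and Feinberg \cite{BF}, which is a legitimate alternative route for small $k$, but as you have set it up it fails at the first step. With $\deg L=g-1$ the sub-line-bundle $L\subset E$ has $\mu(L)=g-1=\mu(E)$, so \emph{every} bundle produced by your extension is destabilized by $L$: it is at best strictly semistable, hence does not lie in $M(2,K)$ and contributes nothing to $B(2,K,k)$. The odd case is worse still: $\deg L=g-1$ forces $h^0(L)=h^0(K\otimes L^{-1})$ by Riemann--Roch, so your asymmetric split $h^0(L)=k_1+1$, $h^0(K\otimes L^{-1})=k_1$ requires $\deg L=g$, which makes $E$ actually unstable. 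Any workable extension construction must take $\deg L\le g-2$, after which the quotient $K\otimes L^{-1}$ has \emph{more} sections than $L$ and the whole numerology of step (1) has to be redone.

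Even after such a repair, the construction cannot deliver the dimension statement. The extensions for which all sections of the quotient lift form the annihilator of the image of the multiplication map $H^0(K\otimes L^{-1})\otimes H^0(K\otimes L^{-1})\to H^0(K^{2}\otimes L^{-2})$; this is a proper linear subspace of the $\mathrm{Ext}$ group, not something a ``general class'' achieves, and since that image has dimension at least $2h^0(K\otimes L^{-1})-1$, a direct count shows the family you sweep out has dimension strictly less than $\beta(2,K,k)$ in every case covered by the hypotheses. As every component of $B(2,K,k)$ has dimension $\ge\beta(2,K,k)$, your locus is a proper closed subvariety of whatever component contains it, so step (4) cannot ``identify a component of exactly that dimension.'' The assertion that some component has dimension exactly $\beta(2,K,k)$ rests on the injectivity of the symmetrized Petri map (\ref{eqn10}) at a suitable point --- essentially Theorem \ref{th20}, the main result of \cite{Te6}, itself a substantial degeneration argument --- so your step (5) is not optional; it is the heart of the matter, and it is precisely what the limit-linear-series proof of \cite{Te5} supplies directly by exhibiting the right number of independent conditions on the central fibre.
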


\begin{corollary}\label{cor6} Under the conditions of Theorem \ref{th19}, $G_0(2,K,k)\ne\emptyset$.
\end{corollary}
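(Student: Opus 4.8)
The plan is to deduce the non-emptiness of $G_0(2,K,k)$ directly from the non-emptiness of $B(2,K,k)$ supplied by Theorem \ref{th19}, using the implication recorded in Lemma \ref{lem2}. First I would observe that Theorem \ref{th19} produces, under the stated numerical hypotheses, a bundle $E\in M(2,2g-2)$ with $\det E\cong K$ and $h^0(E)\ge k$; in particular $E$ is a \emph{stable} bundle carrying a $k$-dimensional space of sections. Choosing any $k$-dimensional subspace $V\subseteq H^0(E)$, we obtain a coherent system $(E,V)$ of type $(2,2g-2,k)$ with $E$ stable.

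The key step is then the first bullet of Lemma \ref{lem2}: if $E$ is stable, then $(E,V)\in G_0(n,d,k)$, i.e.\ $(E,V)$ is $\alpha$-stable for all sufficiently small $\alpha>0$. Indeed, for any proper coherent subsystem $(F,W)$ with $F$ a proper subbundle, stability of $E$ gives $\frac{\deg F}{\mathrm{rk}\,F}<\frac{\deg E}{\mathrm{rk}\,E}$, a strict inequality among rationals with bounded denominators, so it persists after adding a small multiple of $\frac{k}{n}-\frac{\dim W}{\mathrm{rk}\,F}$; and a subsystem of the form $(E,W)$ with $W\subsetneq V$ has strictly smaller $\alpha$-slope for every $\alpha>0$. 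Hence $(E,V)\in G_0(2,K,k)$, which shows $G_0(2,K,k)\ne\emptyset$.

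I do not expect any serious obstacle here: the content is entirely in Theorem \ref{th19}, and Corollary \ref{cor6} is a formal consequence via Lemma \ref{lem2}. The only point requiring a word of care is that the construction must use a bundle of the correct determinant, so that $(E,V)$ genuinely lies in the locus $G(\alpha;2,K,k)$ rather than merely in $G(\alpha;2,2g-2,k)$; but this is automatic since the $E$ produced by Theorem \ref{th19} has $\det E\cong K$ by construction. One could also remark that nothing stronger follows by this argument: since Lemma \ref{lem2} is a one-way implication, we obtain no information about $U(2,K,k)$ (that is, about $\alpha$-stability for \emph{all} permissible $\alpha$, or about the subvariety where $E$ is stable for large $\alpha$) — consistent with the comment preceding the statement that further information would be needed for such a conclusion.
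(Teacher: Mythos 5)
Your proposal is correct and follows exactly the route the paper intends: Theorem \ref{th19} gives $B(2,K,k)\ne\emptyset$, and the first implication of Lemma \ref{lem2} (stable $E$ implies $(E,V)\in G_0$ for any choice of $V$) yields $G_0(2,K,k)\ne\emptyset$. Your justification of that implication via the uniform gap in slopes for small $\alpha$, and your remarks on the determinant condition and on why nothing follows for $U(2,K,k)$, all match the paper's (largely implicit) argument.
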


The most significant result to date is also due to Teixidor.

\begin{theorem}\label{th20}{\em(\cite{Te6})} Let $C$ be a general curve of genus $g$.
Then the symmetrized Petri map (\ref{eqn10}) is injective for every semistable bundle $E$. 
\end{theorem}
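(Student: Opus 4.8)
*Let $C$ be a general curve of genus $g$. Then the symmetrized Petri map $S^2H^0(E)\to H^0(S^2E)$ is injective for every semistable bundle $E$ of rank $2$ with $\det E\cong K$.*

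The natural strategy follows the classical pattern that proves the general curve is a Petri curve (Gieseker, Lazarsfeld), transplanted to the symplectic setting of rank-$2$ bundles with canonical determinant. First I would fix the numerical data: set $k=h^0(E)$, and note that it suffices to treat the maximal case where $E$ lies in (a component of) $B(2,K,k)$ of the expected dimension $\beta(2,K,k)=3g-3-\binom{k+1}{2}$, since injectivity of the symmetrized Petri map on a dense set of such bundles, together with semicontinuity of the corank, would give the statement on the whole locus (one has to be a little careful that every semistable $E$ with $\det E\cong K$ occurs as a limit of ones where the symmetrized Petri map is as non-degenerate as possible; the filtration structure and $S$-equivalence let one reduce to the stable case first). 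The plan is then to degenerate $C$ to a suitable singular stable curve $C_0$ — a chain or tree of elliptic curves, or a generic one-nodal irreducible curve, as in Teixidor's earlier degeneration arguments (\cite{Te1,Te2,Te3}) — equip it with a limit bundle $E_0$ (a torsion-free sheaf, or a bundle on the normalization together with gluing data) carrying a symplectic form with values in the dualizing sheaf $\omega_{C_0}$, and show that the symmetrized Petri map is injective for this explicit $(C_0,E_0)$. Upper semicontinuity of the corank of the Petri map in a family then propagates injectivity to the general curve.

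The key steps, in order, would be: (1) construct the degeneration — choose $C_0$ so that $K_{C_0}=\omega_{C_0}$ restricts to controlled line bundles on the components, and choose $E_0$ as a direct sum or a carefully glued extension of line bundles on the components so that $H^0(E_0)$ decomposes into pieces governed by classical (rank-one) Brill-Noether data, where the ordinary Petri theorem for a general curve is available; (2) compute $H^0(S^2E_0)$ and the symmetrized Petri map in terms of these pieces, reducing the rank-$2$ symmetrized statement to a combination of injectivity statements for ordinary multiplication maps $H^0(M)\otimes H^0(N)\to H^0(MN)$ and symmetric-square maps $S^2H^0(M)\to H^0(M^2)$ on the components — here the symplectic self-duality $E_0\cong E_0^*\otimes\omega_{C_0}$ is what forces the target to be $S^2E_0$ rather than $\mathrm{End}\,E_0\otimes K$, and one must track how the skew form pairs the summands; (3) invoke the classical base-point-free-pencil trick / Petri's analysis on each component to kill the relevant kernels, using that the general curve is Petri and that on a chain of elliptic curves the relevant line bundles are general; (4) check that $E_0$ is (a limit of) stable bundles and that the family of pairs $(C,E)$ is flat with the Petri map forming a map of vector bundles over the base, so that $\{\,\mathrm{rk}\ge r\,\}$ is closed; conclude by semicontinuity.

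The main obstacle is step (2) together with step (3): organizing the combinatorics of the degenerate bundle so that the symmetrized Petri map genuinely decomposes into \emph{pieces one can control}, and simultaneously ensuring the gluing data is generic enough that no unexpected sections of $S^2E_0$ or $E_0$ appear (which would both inflate $h^0$ past the Brill-Noether value and create spurious kernel). In the ordinary (non-symplectic) rank-$2$ Petri problem this is already delicate; the constraint $\det E_0\cong\omega_{C_0}$ removes degrees of freedom in the choice of gluing, so one has less room to make things generic, and the skew pairing can conspire to produce a nonzero element of the kernel unless the component line bundles and their attaching data are chosen with some care. I would expect the bulk of the real work — and the part where Teixidor's ingenuity is needed — to be precisely the explicit choice of $(C_0,E_0)$ making the symmetrized Petri map transparently injective, everything else being the standard semicontinuity wrapper.
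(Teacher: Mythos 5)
The paper itself gives no proof of this theorem; it simply cites Teixidor's paper \cite{Te6}, so your sketch can only be measured against her actual argument. Your general direction (degenerate $C$ to a chain of elliptic curves, analyse a limit object, conclude by semicontinuity) is indeed the flavour of \cite{Te6}, but there is a genuine logical gap in how you propose to use semicontinuity, and it affects both your opening reduction and your concluding step. The theorem asserts injectivity for \emph{every} semistable $E$ on a general curve, i.e.\ a universally quantified statement over the whole of $\widetilde{M}(2,K)$. Upper semicontinuity of the corank of the Petri map says that $\{\mathrm{corank}\ge r\}$ is closed; hence injectivity at a dense set of bundles, or at one well-chosen special fibre $(C_0,E_0)$, propagates only to \emph{nearby} (hence general) bundles, and the corank is perfectly free to jump up on the complementary closed locus. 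So your step ``injectivity on a dense set of such bundles, together with semicontinuity of the corank, would give the statement on the whole locus'' runs semicontinuity in the wrong direction, and likewise your final wrapper proves the statement only for the general $E$ in the components you hit, not for all $E$.

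To prove the actual statement one must argue contrapositively over the moduli of curves: if some semistable $E$ with $\det E\cong K$ on the general curve had non-injective symmetrized Petri map, then (after compactifying the family of such pairs) a \emph{limit} of such bad pairs would exist on the degenerate curve $C_0$; the real content of \cite{Te6} is a classification and exclusion of \emph{all} possible limit objects (limit linear series of rank two with the canonical-determinant constraint and a nonzero element in the kernel of the multiplication map) on a chain of elliptic curves, not the construction of a single good $(C_0,E_0)$. This is why the combinatorial analysis you defer to steps (2)--(3) cannot be organised around one explicit direct sum of line bundles: you must bound the vanishing orders at the nodes for every admissible limit and show the putative kernel element is forced to vanish identically. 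Your proposal correctly identifies where the difficulty lives, but as structured it would only yield generic injectivity, which is a strictly weaker statement than Corollary \ref{cor7} requires.
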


\begin{corollary}\label{cor7} Let $C$ be a general curve of genus $g$. If $\beta(2,K,k)<0$, then $G_0(2,K,k)=\emptyset$.
\end{corollary}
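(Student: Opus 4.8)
The plan is to run the Brill--Noether smoothness argument in the symmetrized, fixed-determinant setting: I will use Theorem \ref{th20} to force every point of $G_0(2,K,k)$ to be a smooth point at which the moduli space has dimension exactly $\beta(2,K,k)$, which is impossible once $\beta(2,K,k)<0$.

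First I would take, for contradiction, a coherent system $(E,V)\in G_0(2,K,k)$; thus $\mathrm{rk}\,E=2$, $\det E\cong K$, $\dim V=k$, and $(E,V)$ is $\alpha$-stable for $\alpha$ in the first interval $(0,\alpha_1)$. Exactly as in the proof of Lemma \ref{lem2}, this forces $E$ to be semistable: if $F\subset E$ were a destabilizing line subbundle then $\deg F\ge g$, while $\mu_\alpha(E,V)=(g-1)+\tfrac{\alpha k}{2}<g$ for all sufficiently small $\alpha>0$, so $(F,V\cap H^0(F))$ would violate $\alpha$-stability on a nonempty subinterval of $(0,\alpha_1)$. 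Hence $E$ is a semistable bundle and Theorem \ref{th20} applies: the symmetrized Petri map $S^2H^0(E)\to H^0(S^2E)$ is injective. Restricting along $V\subseteq H^0(E)$, the symmetrized Petri map $S^2V\to H^0(S^2E)$ of the coherent system $(E,V)$ is injective as well.

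Next I would invoke the fixed-determinant analogue of Lemma \ref{lem4}: for $(E,V)\in G(\alpha;2,K,k)$, the moduli space $G(\alpha;2,K,k)$ is smooth of dimension $\beta(2,K,k)$ at $(E,V)$ if and only if the symmetrized Petri map of $(E,V)$ is injective. This is the deformation-theoretic statement underlying the study of $B(2,K,k)$ in \cite{BF,Mu}: fixing $\det E\cong K$ replaces $E^*\otimes K$ by $E$ via the canonical skew-symmetric isomorphism, the first-order deformations and obstructions of the Brill--Noether (resp.\ coherent-system) condition are then computed from the symmetrized Petri map, and injectivity is precisely the condition under which the locus has the expected codimension $\tfrac{k(k+1)}{2}$ inside the $(3g-3)$-dimensional moduli space of bundles with determinant $K$. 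Combining this with the injectivity established in the previous step, $G_0(2,K,k)$ is smooth of dimension $\beta(2,K,k)$ at $(E,V)$; in particular $\beta(2,K,k)\ge0$, contradicting the hypothesis. Therefore $G_0(2,K,k)=\emptyset$.

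The main obstacle is the symmetrized analogue of Lemma \ref{lem4} itself. One must set up the deformation theory of coherent systems $(E,V)$ subject to the constraint $\det E\cong K$, identify the tangent and obstruction spaces, and check that the skew form on $E$ renders the relevant cup-product pairing symmetric, so that the obstruction space is the cokernel of $S^2V\to H^0(S^2E)$ and the tangent space has dimension $\beta(2,K,k)$ exactly when this map is injective. This is routine but is not recorded for coherent systems in the cited references, though it is implicit in \cite{BF,Mu}. A secondary point worth flagging: the reduction ``$E$ semistable'' is only available in the first interval, which is why the statement is confined to $G_0(2,K,k)$; for $\alpha>\alpha_1$ the underlying bundle of an $\alpha$-stable coherent system need not be semistable, so Theorem \ref{th20} does not apply and the analogous conclusion can genuinely fail. (One could instead argue via $B(2,K,k)$ using Lemma \ref{lem2}, but then the strictly semistable locus of $\widetilde B(2,K,k)$ would have to be treated separately; the Petri-map route above handles all semistable $E$ uniformly.)
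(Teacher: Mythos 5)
Your strategy is the intended one: reduce to a semistable $E$, feed it into Theorem \ref{th20}, and use a Petri-type smoothness statement to force the local dimension to equal $\beta(2,K,k)$, which is absurd when $\beta(2,K,k)<0$. The reductions you actually carry out are correct: $(E,V)\in G_0(2,K,k)$ does force $E$ semistable (this is just Lemma \ref{lem2}), Theorem \ref{th20} then applies, and injectivity restricts to any subspace of $S^2H^0(E)$. The one genuine gap is the step you yourself flag: the fixed-determinant, symmetrized analogue of Lemma \ref{lem4} for the moduli space $G(\alpha;2,K,k)$ is not in the cited literature, and calling it ``routine'' understates the work. Even its correct statement needs care: the Petri map of Lemma \ref{lem4} is $V\otimes H^0(E^*\otimes K)\to H^0(\mathrm{End}\,E\otimes K)$, which under the skew isomorphism $E\cong E^*\otimes K$ becomes a map out of $V\otimes H^0(E)$, so after fixing the determinant the natural domain is the image of $V\otimes H^0(E)$ in $S^2H^0(E)$, which strictly contains your $S^2V$ when $V\neq H^0(E)$. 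Fortunately Theorem \ref{th20} gives injectivity on all of $S^2H^0(E)$, so whichever subspace is the right one, the injectivity you need holds; the missing piece is purely the identification of the tangent and obstruction spaces of the fixed-determinant coherent-systems moduli, which you would have to supply.

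The paper's own (implicit) argument avoids this by staying inside Brill--Noether theory: $(E,V)\in G_0(2,K,k)$ yields a semistable $E$ with $\det E\cong K$ and $h^0(E)\ge k$, and Theorem \ref{th20} combined with the infinitesimal study of $B(2,K,k)$ already set up in \cite{BF,Mu} (this is what the sentence ``this map governs the infinitesimal behaviour of $B(2,K,k)$'' in section \ref{sec9} refers to) rules out such $E$ when $\beta(2,K,k)<0$. Your reservation about this route --- the strictly semistable locus --- is legitimate and is the real advantage of your packaging, but it can be dealt with directly: a strictly semistable $E$ here is an extension of line bundles $L$ and $KL^{-1}$, each of degree $g-1$, and $\alpha$-stability of $(E,V)$ for small $\alpha$ forces $\dim\bigl(V\cap H^0(L)\bigr)<k/2$, hence $h^0(KL^{-1})=h^0(L)>k/2$; on a general curve classical Brill--Noether theory bounds $h^0(L)$ for $\deg L=g-1$ by $\sqrt{g}$, and a short computation shows this is incompatible with $\beta(2,K,k)<0$. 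So: right idea, and arguably a cleaner formulation than the paper's, but as written it rests on an unproved deformation-theoretic lemma that you must either establish or replace by the $B(2,K,k)$ argument together with the strictly semistable analysis above.
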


To prove the conjecture, it is therefore sufficient to extend Theorem \ref{th19} to all cases where $\beta(2,K,k)\ge0$. Progress on this has been made recently by Seongsuk Park using homological methods \cite{Park}.

\begin{remark}\label{rk6}\begin{em} Theorem \ref{th20} and Corollary \ref{cor7} fail for certain Petri curves lying on K3 surfaces. In fact, based on work of Mukai, Voisin \cite[Th\'eor\`eme 0.1]{Voi} observed that on these curves $B\left(2,K,\frac{g}2+2\right)\ne\emptyset$ for any even $g$. However, for $g$ even, $g\ge10$, the Brill-Noether number $\beta\left(2,K,\frac{g}2+2\right)<0$.\end{em}\end{remark}

\section{Special curves}\label{sec10}

The results of sections \ref{sec3} and \ref{sec4} are valid for arbitrary smooth curves. On the other hand, while some of the results of the remaining sections (dealing with the case $k>n$) are valid on arbitrary smooth curves, most of them hold on general curves. Indeed, if $k>n$, the value of $d_0$ certainly depends on the geometry of $C$ and not just on $g$. This is of course no different from what happens for $n=1$, but it is already clear that the distinctions in higher rank are more subtle than those for the classical case (see, for instance, Example \ref{ex2} and Remark \ref{rk6}). 

There is as yet little work in the literature concerning coherent systems on special curves; here by {\em special} we mean a curve for which the moduli spaces of coherent systems exhibit behaviour different from that on a general curve. So far as I am aware, the only papers relating specifically to special curves are those of Ballico on bielliptic curves \cite{Ba1} and Brambila-Paz and Ortega on curves with specified Clifford index \cite{BO}. In the latter paper, the authors define positive numbers
$$d_u=d_u(n,g,\gamma):=\left\{\begin{array}{l@{\quad\quad}l@{\quad\quad}l}n+g-1+\frac{g-1}{n-1}
&\text{if $\gamma\ge g-n$}\\
2n+\gamma+\frac\gamma{n-1} &\text{if $\gamma<g-n$}
\end{array}\right.$$
and $$d_\ell=d_\ell(n,g,\gamma):=\left\{\begin{array}{l@{\quad\quad}l@{\quad\quad}l}n+g-1
&\text{if $\gamma\ge g-n$}\\
2n+\gamma &\text{if $\gamma<g-n$}
\end{array}\right.$$
and prove, among other things, the following three results.

\begin{theorem}\label{th21}{\em(\cite[Theorem 1.2]{BO})} Let $C$ be a curve of genus $g$ and Clifford index $\gamma$ and suppose $k>n$. Then $d_0(n,k)\ge d_\ell$.
\end{theorem}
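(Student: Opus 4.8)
The plan is to bound from below the degree of any coherent system admitting a semistable structure, by exploiting the definition of the Clifford index together with the elementary consequences of $\alpha$-stability already recorded in Lemma \ref{lem1}. Suppose $(E,V)\in G(\alpha;n,d,k)$ for some $\alpha$ satisfying (\ref{conditions1}), with $k>n$. First I would reduce to the generically generated case: if $(E,V)$ is not generically generated, the subbundle $F$ generically generated by $V$ gives a subsystem $(F,V)$ of type $(n',d',k)$ with $n'<n$ and $d'\le d$, and $\alpha$-stability of $(E,V)$ forces $\mu_\alpha(F,V)<\mu_\alpha(E,V)$; chasing this inequality (as in the proof of Lemma \ref{lem1}) shows it suffices to treat the generically generated case, so assume $(E,V)$ is generically generated with $k$ sections.

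The heart of the argument is then a Clifford-type estimate. Since $(E,V)$ is $\alpha$-semistable with $k>0$, by Theorem \ref{th2} we already have an upper bound on $k$ in terms of $d$; the point of Theorem \ref{th21} is the complementary lower bound on $d$ given a generated system with "many" sections. I would argue as follows: a generically generated coherent system of type $(n,d,k)$ with $k>n$ produces, after passing to the generated subsheaf and taking a general $(n-1)$-dimensional subspace of a suitable $k$-dimensional space of sections, a quotient line bundle (or a rank-one subsheaf) carrying at least $k-n+1$ independent sections; semistability of $(E,V)$ controls the degree of this rank-one piece from below in terms of $d$, $n$, $k$ and $\alpha$. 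On the other hand, the definition of the Clifford index $\gamma$ says precisely that any line bundle $L$ contributing to the Clifford index satisfies $\deg L\ge 2h^0(L)-2+\gamma$ (with the usual caveat for $h^0(L)\le1$ or $h^1(L)\le1$). Feeding the section count into this inequality, and then optimizing over the choice of $\alpha\in(0,\frac{d}{n-k})$ and over the two regimes $\gamma\ge g-n$ and $\gamma<g-n$, yields exactly the two-case bound $d\ge d_\ell(n,g,\gamma)$. The split into the two formulas for $d_\ell$ will come from whether the relevant line bundle has $h^1$ small (so that Riemann-Roch rather than the Clifford bound is the binding constraint, giving the $n+g-1$ term) or $h^1$ large (so that the Clifford bound $2n+\gamma$ is binding).

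The main obstacle I anticipate is making the "rank-one piece with many sections" step genuinely rigorous while keeping track of all the degenerate cases: the rank-one subsheaf may fail to be saturated (so one must pass to its saturation, losing degree in a controlled way), the relevant space of sections may not be base-point free, and the boundary cases $h^0=1$ or $h^1\le1$ in the definition of the Clifford index must be handled separately (these are exactly the cases producing the $\gamma\ge g-n$ branch). A secondary technical point is that one must ensure the chosen generic $(n-1)$-dimensional subspace really does cut down the section count by at most $n-1$; this is a standard general-position argument but needs the hypothesis $k>n$ to guarantee a nonzero quotient. Once these points are dispatched, the inequality $d\ge d_\ell$ is a direct combination of the semistability inequality with the Clifford inequality, and since this holds for every $(E,V)\in G(\alpha;n,d,k)$ and every admissible $\alpha$, it gives $d_0(n,k)\ge d_\ell$ as claimed.
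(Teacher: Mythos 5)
The paper gives no proof of this theorem --- it is quoted directly from \cite[Theorem 1.2]{BO} --- so your plan must be measured against the argument that reference (and the surrounding techniques of this survey) actually uses, which is the dual span. Your central construction cannot deliver the stated bound. Passing to the quotient of $E$ by the subsheaf $F_W$ generated by a general $(n-1)$-dimensional $W\subset V$ produces a rank-one sheaf $L$ with $h^0(L)\ge k-n+1$ and $\deg L=d-\deg F_W\le d$; applying the Clifford inequality to $L$ then gives only $d\ge\deg L\ge 2(k-n)+\gamma$, which for $k=n+1$ is $d\ge 2+\gamma$ --- strictly weaker than the required $2n+\gamma$ once $n\ge2$. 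Note also that $\alpha$-stability bounds degrees of quotients from \emph{below} and of subobjects from \emph{above}, so it cannot supply the missing ingredient; the alternative you mention (a rank-one \emph{subsheaf} carrying $k-n+1$ sections of $V$) need not exist, and if it did, combining stability with Clifford would force $d>2n+n\gamma$ for $k=n+1$, which is false (e.g.\ for smooth plane curves, by Corollary \ref{cor8}). The ``optimization over $\alpha$'' is a red herring: $d_\ell$ is independent of $\alpha$, and for $k>n$ the constraint $\alpha(n-k)<d$ is vacuous.

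The construction that works is the one used repeatedly in this survey (see the proofs of Theorems \ref{th14}(iii) and \ref{th18}): choose an $(n+1)$-dimensional subspace $V'\subseteq V$ (possible since $k>n$) generically generating a full-rank subsheaf $E'\subseteq E$ of degree $d'\le d$, and dualize the sequence $0\to N^*\to V'\otimes\mathcal{O}\to E'\to 0$. Here $N$ is a line bundle with $\deg N=d'$, and since $H^0((E')^*)=0$ (a consequence of $\alpha$-stability and $d>0$), the dual sequence gives $h^0(N)\ge n+1$. If $h^1(N)\le1$, Riemann--Roch yields $d\ge d'\ge n+g-1$; if $h^1(N)\ge2$, the definition of the Clifford index yields $d\ge d'\ge 2(n+1)-2+\gamma=2n+\gamma$; and $\min\{n+g-1,\,2n+\gamma\}$ is exactly $d_\ell$. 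The essential point your plan misses is that one needs a line bundle with $n+1$ sections whose degree is (essentially) \emph{all} of $d$, which the dual span delivers and the quotient construction does not. Finally, your reduction to the generically generated case is not routine: the subsheaf generated by $V$ has rank $n_1<n$, and rerunning the argument there gives only the weaker bound $\min\{n_1+g-1,\,2n_1+\gamma\}$, so the stability inequality must be used quantitatively to recover the bound for rank $n$.
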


\begin{theorem}\label{th22}{\em(\cite[Theorem 4.1]{BO})} Let $C$ be a curve of genus $g$ and Clifford index $\gamma$ and suppose that there exists a generated coherent system $(G,W)$ of type $(m,d,n+m)$ with $H^0(G^*)=0$ and $d_\ell\le d<d_u$. Then $U(n,d,n+m)\ne\emptyset$.
\end{theorem}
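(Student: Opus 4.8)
The plan is to build $(E,V)$ of type $(n,d,n+m)$ directly from the hypothesised generated coherent system $(G,W)$ by applying the dual span construction of section~\ref{sec3} together with an elementary transformation, and then to verify $\alpha$-stability using Clifford-index bounds on subbundles. Since $(G,W)$ is generated with $H^0(G^*)=0$ and $\dim W=n+m$, the dual span $D(G,W)=(D_W(G),W')$ is a coherent system of type $(n,d',\ge n+m)$ for an appropriate $d'$, sitting in the exact sequences
\begin{equation*}
0\to D_W(G)^*\to W\otimes{\mathcal O}\to G\to 0,\qquad 0\to G^*\to W^*\otimes{\mathcal O}\to D_W(G)\to 0.
\end{equation*}
The first thing I would check is that $D_W(G)$ has rank $n$ and degree $d$ (here $d$ is the degree appearing in the statement, with $d_\ell\le d<d_u$), and that $W'$ has dimension exactly $n+m$; this is where the precise numerology of $d_\ell$ and $d_u$ enters, and is essentially bookkeeping with the two sequences above plus $h^0(G^*)=0$.

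The second step is to prove $\alpha$-stability of $D(G,W)$ for a suitable range of $\alpha$. Following the argument of Lemma~\ref{lem5}, for any coherent subsystem $(F,\tilde W)$ of $D(G,W)$ with $\mathrm{rk}\,F=r<n$, generatedness of $D(G,W)$ and vanishing of $H^0(D_W(G)^*)$ force the quotient $D_W(G)/F$ to be generated by the image of $W^*$, so that image has dimension $\ge n-r+m$ — wait, one must be careful, since here $k-n=m$ rather than $1$, the clean inequality $\dim\tilde W/r\le 1$ of Lemma~\ref{lem5} is replaced by $\dim\tilde W\le\dim W'-(n-r+\text{something})$, and one needs the Clifford-index hypothesis on $C$ to bound $\deg F$ from above (equivalently to bound $h^0$ of subbundles) and thereby control $\mu_\alpha(F,\tilde W)$. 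This is the point at which $\gamma$ genuinely intervenes: the bound $d<d_u$ is exactly what is needed so that any destabilising subbundle would violate the Clifford bound associated to $\gamma$.

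The third step is the elementary transformation. The dual span as constructed may have degree smaller than the target $d$; to reach any $d$ in the range $[d_\ell,d_u)$ I would take an exact sequence $0\to D_W(G)\to E\to T\to 0$ with $T$ torsion of the appropriate length, keeping $V=W'$. As in the proofs of Theorems~\ref{th14} and~\ref{th17}, $(E,V)$ remains $\alpha$-stable for the general such elementary transformation because raising the degree by a torsion quotient can only help the slope inequality, provided the resulting $E$ stays stable; stability of the general $E$ follows from the M\'erindol-type genericity result \cite[Th\'eor\`eme A.5]{Mer2} already invoked in Theorems~\ref{th3}, \ref{th14} and~\ref{th17}. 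Checking that $V$ still has the right dimension and that no new destabilising subsystem appears under the elementary transformation is routine given the rank-one Clifford estimates.

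\textbf{The main obstacle} I anticipate is the stability verification in the middle step: unlike the case $k=n+1$ of Lemma~\ref{lem5}, where the bound $\dim W/\mathrm{rk}\,F\le 1$ drops out almost for free, here one must combine the generatedness/vanishing input with a genuinely quantitative Clifford-index estimate on subbundles of $E$ (or of $D_W(G)$), and track carefully how $\dim(V\cap H^0(F))$ is constrained by the dual sequence. Getting the inequalities to close precisely at $d=d_u$ — rather than with a gap — is the delicate part, and is presumably why the hypothesis is phrased in terms of the existence of a generated $(G,W)$ with $d_\ell\le d<d_u$ rather than an unconditional statement. Everything else (the dimension count, the elementary transformation, the appeal to \cite{Mer2} for genericity) follows the now-standard template of sections~\ref{sec4}--\ref{sec8}.
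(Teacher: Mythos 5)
The paper itself contains no proof of Theorem~\ref{th22}: it is quoted verbatim from \cite[Theorem 4.1]{BO}, so your attempt can only be measured against the strategy of that reference. Your first two steps do match it in outline --- the construction is the dual span $D(G,W)$, and the substance of the theorem is the stability analysis --- but there are two problems.

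First, your third step (the elementary transformation) is both unnecessary and inconsistent with your own first step. From $0\to D_W(G)^*\to W\otimes{\mathcal O}\to G\to 0$ one reads off immediately that $\mathrm{rk}\,D_W(G)=(n+m)-m=n$ and $\deg D_W(G)=\deg G=d$ exactly, and $\dim W'=n+m$ because $H^0(G^*)=0$. The $d$ in the conclusion is the \emph{same} $d$ as the degree of the hypothesised $(G,W)$; the theorem does not assert non-emptiness for every degree in $[d_\ell,d_u)$, only for the degree actually realised by a generated $(G,W)$. An elementary transformation $0\to D_W(G)\to E\to T\to 0$ would change the degree to $d+\mathrm{length}(T)\ne d$, producing a coherent system of the wrong type, and would also destroy the exact duality and generatedness you rely on in step two. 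This step should simply be deleted.

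Second, the genuine content --- that $D_W(G)$ is a stable bundle and that $(D_W(G),W')$ is $\alpha$-stable for large $\alpha$ --- is exactly where the two cases in the definitions of $d_\ell$ and $d_u$ (the terms $\frac{g-1}{n-1}$ versus $\frac{\gamma}{n-1}$) must enter a quantitative Clifford-type estimate on $h^0$ of sub- and quotient bundles, and your proposal only names this as ``the main obstacle'' without carrying it out. Note in particular that for $m\ge2$ the counting argument of Lemma~\ref{lem5} gives only $\dim\tilde W\le m+\mathrm{rk}\,F-1$ for a subsystem $(F,\tilde W)$, so the ratio $\frac{\dim\tilde W}{\mathrm{rk}\,F}$ can \emph{equal} $\frac{n+m}{n}$ when $\mathrm{rk}\,F\le\frac{n(m-1)}{m}$; large-$\alpha$ stability then hinges on the secondary inequality $\frac{\deg F}{\mathrm{rk}\,F}<\frac{d}{n}$, i.e.\ it genuinely presupposes the bundle stability of $D_W(G)$, which is the part you have left as a black box. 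So the skeleton is right, but the heart of the proof is missing rather than merely deferred.
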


\begin{corollary}\label{cor8}{\em(\cite[Corollary 5.1]{BO})} Let $C$ be a smooth plane curve of degree $d\ge5$. Then $d_0(2,3)=d_\ell(2,g,\gamma)=d$ and $U(2,d,3)\ne\emptyset$. 
\end{corollary}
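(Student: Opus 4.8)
The plan is to read off the invariants of the plane curve, identify which branch of the definitions of $d_\ell$ and $d_u$ applies, feed the hyperplane bundle into Theorem~\ref{th22} to get non-emptiness, and match this against the lower bound of Theorem~\ref{th21}.

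First I would record the numerical data. For a smooth plane curve $C\subset\mathbb{P}^2$ of degree $d\ge5$ the genus is $g=\binom{d-1}{2}=\frac{(d-1)(d-2)}{2}$, and the Clifford index is $\gamma=d-4$, computed by the hyperplane bundle $\mathcal{O}_C(1)$, which has degree $d$ and $h^0(\mathcal{O}_C(1))=3$ (the latter from the exact sequence $0\to\mathcal{O}_{\mathbb{P}^2}(1-d)\to\mathcal{O}_{\mathbb{P}^2}(1)\to\mathcal{O}_C(1)\to0$, using $H^1(\mathbb{P}^2,\mathcal{O}(j))=0$). That $\mathcal{O}_C(1)$ actually computes $\gamma$ for $d\ge5$ is a standard fact about plane curves and the appropriate reference would be inserted here. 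With $n=2$ one checks $\gamma=d-4<\frac{d^2-3d-2}{2}=g-n$ for every $d\ge5$, so we are in the second branch of the definitions of \cite{BO}: $d_\ell=2n+\gamma=d$ and $d_u=2n+\gamma+\frac{\gamma}{n-1}=2d-4$.

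Next I would exhibit the coherent system required by Theorem~\ref{th22}. Take $(G,W):=(\mathcal{O}_C(1),H^0(\mathcal{O}_C(1)))$, a coherent system of type $(m,d,n+m)=(1,d,3)$. It is generated because $\mathcal{O}_C(1)$ is very ample, and $H^0(G^*)=H^0(\mathcal{O}_C(-1))=0$ since $\deg G^*=-d<0$. Because $d\ge5$ we have $d_\ell=d\le d<2d-4=d_u$, so Theorem~\ref{th22} with $n=2$, $m=1$ applies and yields $U(2,d,3)\ne\emptyset$; in particular $d_0(2,3)\le d$. Combining with Theorem~\ref{th21}, which gives $d_0(2,3)\ge d_\ell=d$, we conclude $d_0(2,3)=d=d_\ell(2,g,\gamma)$, which together with $U(2,d,3)\ne\emptyset$ is the whole statement.

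Every step here is essentially bookkeeping once Theorems~\ref{th21} and~\ref{th22} are available, so I do not anticipate a genuine obstacle. The two points demanding care are: quoting correctly that the hyperplane bundle computes the Clifford index of a plane curve of degree $\ge5$, since without this one cannot evaluate $d_\ell$ and $d_u$; and verifying the strict inequality $d<d_u=2d-4$, which is precisely the hypothesis $d\ge5$ (it fails at $d=4$, consistent with the stated range, where moreover $\gamma\ne d-4$).
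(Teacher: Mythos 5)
Your argument is exactly the derivation the survey intends: the corollary is stated immediately after Theorems~\ref{th21} and~\ref{th22} precisely because it follows from them by feeding in $(G,W)=(\mathcal{O}_C(1),H^0(\mathcal{O}_C(1)))$ of type $(1,d,3)$ and computing $\gamma=d-4$, $d_\ell=d$, $d_u=2d-4$ for a smooth plane curve of degree $d\ge5$. Your bookkeeping (the branch check $\gamma<g-n$, generation and $H^0(G^*)=0$ for the hyperplane bundle, and the strict inequality $d<2d-4$ forcing $d\ge5$) is correct, so the proposal matches the paper's approach.
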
 

In the corollary, one may note that the value of $d_0$ here is much smaller than the value for a general curve of the same genus given by Corollary \ref{cor4}. The paper \cite{BO} includes also examples where $d_0(n,n+1)=g-1$.

\section{Singular curves}\label{sec11}
 
The construction of moduli spaces for coherent systems can be generalized to arbitrary curves (see \cite{KN}). Of course, Teixidor's work depends on constructing coherent systems on reducible curves and in particular the concept of limit linear series (see \cite{Te4,Te5,Te1,Te2,Te6,Te3} and Teixidor's article in this volume), but apart from this there has been little work on coherent systems on singular curves. There is some work of Ballico proving the existence of $\alpha$-stable coherent systems for large $d$ \cite{Ba3,Ba6} and a paper of Ballico and Prantil for $C$ a singular curve of genus $1$ \cite{BP}.

The situation for nodal curves has now changed due to work of Bhosle, who has generalized results of \cite{BGN} and \cite{BGMN}. We refer to \cite{Bh1,Bh2} for details.

\section{Open Problems}\label{sec12}

There are many open problems in the theory of moduli spaces of coherent systems on algebraic curves. As in the main part of this article, we restrict attention here to problems of emptiness and non-emptiness. The basic outstanding problems can be formulated as follows.

\begin{problem}\label{p1}\begin{em} Given $C$ and $(n,k)$ with $n\ge2$, $k\ge1$, determine the value of $d_0(n,k)$ (the minimum value of $d$ for which $G(\alpha;n,d,k)\ne\emptyset$ for some $\alpha>0$).\end{em} 
\end{problem}

\begin{problem}\label{p2}\begin{em} Given $C$ and $(n,k)$, prove or disprove Model Theorem. If the theorem fails to hold, Theorem \ref{th1} implies that there are finitely many values of $d\ge d_0(n,k)$ for which $U(n,d,k)=\emptyset$. For each such $d$, find the range of $\alpha$ (possibly empty) for which $G(\alpha;n,d,k)\ne\emptyset$. Note that we already have a solution for this problem when $g=2$ and $k=n+1$; in this case (a) is true but not (b) or (c) (Theorem \ref{th9}). In Example \ref{ex}, (a) fails but (b) may possibly be true.
\end{em}\end{problem}

\begin{problem}\label{p3}\begin{em} For fixed genus $g$ and fixed $(n,d)$, we know that there exists a bound, independent of $\alpha$, on $h^0(E)$ for $E$ to be an $\alpha$-semistable coherent system. The bound given by \cite[Lemmas 10.2 and 10.3]{BGMN} looks weak. Find a better (even best possible) bound. An answer to this would be useful in estimating codimensions of flip loci.\end{em}
\end{problem}

We now turn to some more specific problems, which we state for a general curve, although versions for special curves could also be produced.

\begin{problem}\label{p4}\begin{em}
A conjecture of D.~C.~Butler \cite[Conjecture 2]{Bu} can be stated in the following form.

\begin{conj} {\bf (Butler's Conjecture)} Let $C$ be a general curve of genus $g\ge3$ and let $(E,V)$ be a general generated coherent system in $G_0(m,d,n+m)$. Then the dual span $D(E,V)$ belongs to $G_0(n,d,n+m)$.
\end{conj}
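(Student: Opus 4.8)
The natural line of attack is to reduce the conjecture to a statement purely about vector bundles --- the stability of the dual span bundle $D_V(E)$ for a general generated $(E,V)\in G_0(m,d,n+m)$ --- and then to prove that statement by degenerating $C$ to a reducible curve, in the spirit of the method of Teixidor i Bigas behind Theorems~\ref{th13} and \ref{th15}. It is worth noting at the outset that the large-$\alpha$ analogue of the conjecture is comparatively easy (compare Lemma~\ref{lem5} and \cite[section 5.4]{BGMN}); the whole difficulty is that Butler's conjecture concerns $G_0$, the small-$\alpha$ end, where the dual span duality does not obviously preserve $\alpha$-stability.

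First I would carry out the reduction. For a general generated $(E,V)\in G_0(m,d,n+m)$ with $d>0$ (there being nothing to prove otherwise), Lemma~\ref{lem2} gives that $E$ is semistable; on a dense open subset of the relevant component $E$ is in fact stable --- automatically so when $\gcd(m,d)=1$ --- so $H^0(E^*)=0$ and the dual span sequences
$$0\to D_V(E)^*\to V\otimes\mathcal O\to E\to0,\qquad 0\to E^*\to V^*\otimes\mathcal O\to D_V(E)\to0$$
are exact. Then $D(E,V)=(D_V(E),V')$ has type $(n,d,n+m)$, $D_V(E)$ is generated by the $(n+m)$-dimensional space $V'=\operatorname{im}(V^*)$, and $D(D(E,V))=(E,V)$. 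Letting $\alpha\to0^+$ in the definition of $\alpha$-stability, one checks that $D(E,V)\in G_0(n,d,n+m)$ is equivalent to: (i) $D_V(E)$ is semistable, and (ii) every subbundle $F\subset D_V(E)$ with $0<\operatorname{rk}F<n$ and $\deg F/\operatorname{rk}F=d/n$ satisfies $\dim\big(V'\cap H^0(F)\big)<\tfrac{n+m}{n}\operatorname{rk}F$. If $D_V(E)$ is stable then (ii) is vacuous, so it suffices to prove that $D_V(E)$ is stable for the general $(E,V)$, together with a separate genericity check of (ii) in the strictly semistable case, which can occur only when $\gcd(n,d)>1$.

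Next I would prove stability of $D_V(E)$ by degeneration. Specialise $C$ to a reducible nodal curve $C_0$ --- for example a chain of elliptic curves, or $C_0=C'\cup_p\Gamma$ --- and build on $C_0$ a generated coherent system of type $(m,d,n+m)$ by gluing generated systems on the components, each assembled from line bundles with prescribed numbers of sections (available by classical Brill--Noether theory since $C_0$ is a specialisation of a general curve). The dual span construction interacts with the normalisation sequence in a way that lets one describe the flat limit of $D_V(E)$ component by component, and the gluing data can be chosen so that this limit is a torsion-free sheaf on $C_0$ whose restriction to each component is semistable of the correct slope; the building-block stability needed here is supplied by Butler's theorem \cite{Bu} for line bundles and by the $k=n+1$ results obtained via \cite{Br}. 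A limit-linear-series bookkeeping then shows the limit can be arranged of type $(n,d,n+m)$ and stable, and openness of stability in families propagates this to the general smooth $C$ and the general generated $(E,V)$, which by the reduction proves the conjecture. Lemma~\ref{lem3} is used along the way to guarantee that the families constructed have at least the expected dimension $\beta$, matching the parameter counts.

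The hard part will be the degeneration step: controlling $D_V(E)$ in the limit. Forming spans does not commute naively with normalisation, so the gluing data --- and, crucially, the distribution of the $n+m$ sections among the components --- must be chosen so that the limiting sheaf is torsion-free with the right invariants on each component and semistable there; this combinatorics mirrors the two cases of \eqref{eqn8} and is most delicate when $m\nmid(n+m)$. In the sub-case $m=1$ (so $E$ is a line bundle) the degeneration can be avoided entirely: Butler's theorem \cite{Bu} gives stability of $D_V(E)$ for general $V$ directly, and a parameter count as in the proof of Theorem~\ref{th5} disposes of the non-generated loci, so the conjecture is essentially already known there and recovers the results of Section~\ref{sec6}. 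The genuinely open difficulty is the passage to $m\ge2$, where no building-block stability beyond \cite{Bu,Br} is currently available in the generality required.
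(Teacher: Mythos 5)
The statement you are addressing is not a theorem of the paper: it appears in Section~\ref{sec12} as an open problem, and the survey explicitly records that beyond the $G_L$ variant (\cite[section 5.4]{BGMN}) and the case $m=1$ ``we seem \ldots\ to be some distance from proving the general conjecture.'' So there is no proof in the paper to compare against, and your proposal must stand on its own. It does not: what you have written is a research programme, not a proof, and the decisive step is asserted rather than established. Your preliminary reduction is essentially sound --- for $d>0$ and $(E,V)\in G_0$ semistability of $E$ gives $H^0(E^*)=0$, the dual span sequences are exact, and stability of $D_V(E)$ would place $D(E,V)$ in $G_0(n,d,n+m)$ by Lemma~\ref{lem2} --- but note that this reduction replaces the conjecture by its \emph{strong form} (stability of $D_V(E)$), which the paper lists separately as a strictly stronger open conjecture; and your fallback ``separate genericity check of (ii) in the strictly semistable case'' is not carried out. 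There is also a gap already in the reduction: in the non-coprime case $\gcd(m,d)>1$ you cannot simply assume that the general point of a component of $G_0(m,d,n+m)$ has $E$ stable, so the exactness of the dual span sequences and the vanishing $H^0(E^*)=0$ need the semistable case handled honestly.

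The real gap is the degeneration step. Everything you say about specialising to a chain of elliptic curves or to $C'\cup_p\Gamma$, describing ``the flat limit of $D_V(E)$ component by component,'' choosing gluing data so that the limit is torsion-free and semistable of the correct slope, and doing the ``limit-linear-series bookkeeping'' is a wish list: forming the kernel of an evaluation map does not commute with specialisation, the dual span of a limit need not be the limit of the dual spans, and no construction in \cite{Te1,Te2,Te3} or elsewhere in the cited literature controls $D_V(E)$ under such a degeneration for $m\ge2$. You acknowledge this yourself in your final sentence (``no building-block stability beyond \cite{Bu,Br} is currently available in the generality required''), which is an admission that the proof is incomplete precisely where it needs to be complete. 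What you have correctly identified --- that the $m=1$ case follows from \cite[Theorem 2]{Bu} and \cite[Proposition 4.1]{Br} together with a parameter count as in Theorem~\ref{th5}, and that the $G_L$ analogue is known --- agrees with what the survey records, but these are the already-known cases; the conjecture for $m\ge2$ remains open and your proposal does not close it.
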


(Since $G_0(m,d,n+m)$ can be reducible, one needs to be careful over the meaning of the word ``general'' here.) There are many possible versions of this conjecture. For example, if one replaces $G_0$ by $G_L$, then the conjecture holds for all $(E,V)\in G_L(m,d,n+m)$ if $\gcd(m,n)=1$ and, even in the non-coprime case, a slightly weaker version holds \cite[section 5.4]{BGMN}. We seem, however, to be some distance from proving the general conjecture.\end{em}\end{problem}

\begin{problem}\label{p5}\begin{em}A slightly stronger version of Butler's conjecture is 

\begin{conj} {\bf (Butler's Conjecture (strong form))} Let $C$ be a general curve of genus $g\ge3$ and let $(E,V)$ be a general generated coherent system in $G_0(m,d,n+m)$. Then $D_V(E)$ is stable.
\end{conj}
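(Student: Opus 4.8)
The plan would be to reduce the strong form to the construction of a single good example and then invoke openness of stability. Since stability of a vector bundle is an open condition in flat families, it is enough to exhibit, in each irreducible component $Y$ of the locus of \emph{generated} coherent systems inside $G_0(m,d,n+m)$, one point $(E,V)\in Y$ for which $D_V(E)$ is a stable bundle; stability then holds on a dense open subset of $Y$, hence for the general member — which, given the possible reducibility of $G_0(m,d,n+m)$, is what ``general'' should mean here. The only handle on $D_V(E)$ is the pair of dual exact sequences
$$0\to D_V(E)^*\to V\otimes{\mathcal O}\to E\to0,\qquad 0\to E^*\to V^*\otimes{\mathcal O}\to D_V(E)\to0 ,$$
and everything must be read off from these. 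Concretely, a proper destabilising quotient bundle $Q$ of $D_V(E)$, with $\mu(Q)\le d/n$ and $S:=\ker(D_V(E)\to Q)$, produces, on composing $V^*\otimes{\mathcal O}\to D_V(E)\to Q$ and dualising, a subbundle $Q^*\subseteq D_V(E)^*\subseteq V\otimes{\mathcal O}$ whose quotient $G^*:=(V\otimes{\mathcal O})/Q^*$ is globally generated of rank $m+\mbox{rk}\,S<n+m$, is generated by at most $n+m$ sections, has degree $\le (n-\mbox{rk}\,S)\frac{d}{n}$, and surjects onto $E$. So the strong form is equivalent to the non-existence, for general $(E,V)$, of any such $G^*$.

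The first line of attack I would try is degeneration, in the spirit of Teixidor's method of Section \ref{sec8} (Theorem \ref{th15}): degenerate $C$ to a nodal curve $C_0$ — say a chain of elliptic curves of total genus $g$ — and build $(E,V)$ on $C_0$ as a limit of generated coherent systems whose dual span can be assembled component by component from the sequences above. One would arrange the restriction of $D_V(E)$ to each component to be as balanced as possible, so that a destabilising subbundle of the total $D_V(E)$ would restrict to a destabilising configuration on some component; this would then be excluded using the known stability of syzygy (kernel) bundles on $\mathbb{P}^1$ and on elliptic curves, i.e. the $g=0,1$ instances of the dual span construction. Deforming the resulting stable example back to a general smooth curve by semicontinuity would finish the argument. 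The delicate points are that the degeneration must be chosen so that the limit of $D_V(E)$ remains locally free, and that no combinatorial description of limits of dual span bundles on nodal curves is currently available.

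An alternative, carried out on a fixed general curve, is to argue directly with the bundle $G^*$ produced above: one would bound its existence by a higher-rank Brill--Noether argument, the point being that a globally generated bundle of rank $m+\mbox{rk}\,S<n+m$, generated by at most $n+m$ sections, of degree $\le (n-\mbox{rk}\,S)\frac{d}{n}$ and surjecting onto the general stable bundle $E\in B(m,d,n+m)$, ought not to exist on a general curve. It is worth stressing that the machinery which settles the \emph{weak} form of the conjecture for $G_L$ — flips together with the comparison in \cite[section 5.4]{BGMN} — does not suffice here: the weak form only says $D(E,V)\in G_0(n,d,n+m)$, which by Lemma \ref{lem2} gives no more than semistability of $D_V(E)$, so one would still have to rule out strict semistability of the bundle and, beforehand, transport the $G_L$-statement into the $G_0$-regime, which is exactly where the genericity of $(E,V)$ in the (reducible) $G_0(m,d,n+m)$ must be used.

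The step I expect to be the genuine obstacle is this higher-rank input, in whichever form. In the degeneration approach it is the lack of a theory of limit linear series adapted to dual spans; in the direct approach it is the absence of a sharp upper bound, valid on a general curve, for the number of sections of subbundles of a trivial bundle lying over a fixed general bundle $E^*$ — a statement that for $m\ge2$ is itself an open higher-rank Brill--Noether problem. This is precisely why the case $m=1$ is a theorem (\cite{Bu}, \cite{Br}): there the required bound is classical Brill--Noether on a general curve, and there is nothing of higher rank left to prove.
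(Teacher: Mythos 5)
You have not proved the statement, and neither does the paper: Butler's Conjecture (strong form) appears here as an \emph{open conjecture} (Problem \ref{p5}), with no proof offered beyond the case $m=1$, $V=H^0(E)$, which is \cite[Theorem 2]{Bu} and \cite[Proposition 4.1]{Br}. Your text is a programme rather than a proof, and you say so yourself: both of your routes terminate in a step you explicitly label as open (a theory of limit linear series adapted to dual spans in the degeneration approach; a sharp higher-rank Brill--Noether non-existence statement for the bundles $G^*$ in the direct approach). That is the genuine gap, and it is the same gap the paper identifies --- the survey's own comment on the weak form is that ``we seem \ldots to be some distance from proving the general conjecture,'' and the strong form is stronger still. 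So the honest verdict is: no proof exists to compare against, and your proposal does not close the conjecture.

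That said, the parts of your set-up that can be checked are sound and consistent with the paper. The numerics are right: for $(E,V)$ of type $(m,d,n+m)$, the dual span $D_V(E)$ has rank $n$ and degree $d$, a destabilising locally free quotient $Q$ dualises to a subbundle $Q^*\subseteq D_V(E)^*\subseteq V\otimes{\mathcal O}$, and the quotient $G^*=(V\otimes{\mathcal O})/Q^*$ has rank $m+\mbox{rk}\,S$, is globally generated by $\le n+m$ sections, has degree $\le(n-\mbox{rk}\,S)\frac{d}{n}$, and surjects onto $E$; so the conjecture is indeed equivalent to ruling out such $G^*$ for general $(E,V)$. Your observation that the $G_L$-machinery of \cite[section 5.4]{BGMN} only yields, via Lemma \ref{lem2}, semistability of $D_V(E)$ and lives in the wrong $\alpha$-regime is exactly the point made in Problem \ref{p4}. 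Two cautions: the reduction ``one stable example per component of the generated locus suffices by openness of stability'' requires you to know which components of $G_0(m,d,n+m)$ the word ``general'' is meant to range over --- the paper flags precisely this ambiguity --- and for $m=1$ with \emph{general} $V$ (rather than $V=H^0(L)$) the statement is not a settled classical fact but is equivalent to the non-emptiness conjecture for $U(n,d,n+1)$ (\cite[Conjecture 9.5 and Proposition 9.6]{BBN}), which is itself only partially proved (sections \ref{sec6}, and \cite{Mis}); so even your ``base case'' is not entirely closed.
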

When $m=1$ and $C$ is Petri, this conjecture holds for $V=H^0(E)$ (see \cite[Theorem 2]{Bu} and \cite[Proposition 4.1]{Br}). For general $V$, it is equivalent in this case to

\begin{conj}  Let $C$ be a Petri curve of genus $g\ge3$. If $\beta(n,d,n+1)\ge0$, then $U(n,d,n+1)\ne\emptyset$.
\end{conj}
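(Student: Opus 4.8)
The plan is to reduce the conjecture to the non-emptiness of $U(n,d,n+1)$ for every $d\ge d_0(n,n+1)$: by Corollary~\ref{cor2} together with Corollary~\ref{cor4}, and the fact that $\beta(n,d,n+1)$ is strictly increasing in $d$, the hypothesis $\beta(n,d,n+1)\ge0$ is exactly $d\ge d_0(n,n+1)=g+n-[g/(n+1)]$. I would then split the range of $d$ into three pieces. Note that $g\ge 3$ enters only through the cited results below; indeed the statement genuinely fails at $g=2$ (for instance $U(n,2n,n+1)=\emptyset$ there), so no $g$-uniform argument can be expected.

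For the small degrees $d_0\le d\le g+n$, the result is already contained in Theorem~\ref{cor3}, which I would simply invoke: since $C$ is Petri and $\beta(1,d,n+1)\ge0$, classical Brill--Noether theory produces a line bundle $L$ with $\deg L=d$ and $h^0(L)=n+1$; for general such $L$, $(L,H^0(L))$ is generated with $H^0(L^*)=0$, so by Butler's theorem (\cite[Theorem~2]{Bu}, see also \cite[Proposition~4.1]{Br}) $D_L(H^0(L))$ is stable, by Lemma~\ref{lem5} the coherent system $D(L,H^0(L))$ is $\alpha$-stable for large $\alpha$, and by the argument in the proof of Theorem~\ref{th6} one has $G(\alpha;n,d,n+1)=G_L(n,d,n+1)$ for \emph{all} $\alpha>0$ in this range. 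For the large degrees $d\ge d_1$ (the bound of Proposition~\ref{th10}), $U(n,d,n+1)\ne\emptyset$ by elementary transformations, with Theorem~\ref{th1} covering anything left for very large $d$. Since $d_1>g+n$ always, the whole problem collapses to the intermediate window $g+n<d<d_1$, which is \emph{empty} — leaving nothing to prove — precisely when $n\le4$ (Theorem~\ref{th8}) or $g\ge n^2-1$ (Theorem~\ref{th7}).

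The intermediate window, i.e.\ the case $n\ge5$, $g\le n^2-2$, $g+n<d<d_1$, is where the real work lies, and I would attack it by a double induction. The first mechanism is an induction on $d$ via elementary transformations $0\to E\to E'\to T\to 0$ with $T$ of length $1$, as in the proof of Theorem~\ref{th14}(ii): starting from $(E,V)\in U(n,d,n+1)$, a general such $E'$ is semistable, and one must exclude strict semistability. If $E'$ carries a destabilizing subbundle $F$ of borderline slope, one sets $F_0:=F\cap E$, bounds $h^0(E/F_0)$ and $h^0(E'/F)$ using the Petri condition, and deduces that $\dim(V\cap H^0(F))$ must be so large that $(F,V\cap H^0(F))$ already destabilizes $(E,V)$, a contradiction. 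The second mechanism, for degrees too low to admit a clean transformation step, is to realize $(E,V)$ as an extension $0\to(E_1,V_1)\to(E,V)\to(E_2,V_2)\to0$ with $n_1+n_2=n$, $d_1'+d_2'=d$, $k_1+k_2=n+1$, choosing the two factors in moduli spaces already known to be non-empty (from the small-degree case or from the inductive hypothesis on rank), showing that a general extension has $E$ stable and is $\alpha$-stable for all $0<\alpha<\infty$, and using flips to move between critical intervals.

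The main obstacle is making this intermediate-range argument uniform in $n$ and $g$. In the elementary-transformation step the delicate point is that $V$ need not be all of $H^0(E)$, so the naive Clifford/Petri estimate on the number of sections of a destabilizing subbundle is too weak to force the contradiction; one really needs the sharper bounds sought in Problem~\ref{p3}. In the extension step the difficulty is closing the induction on rank: the triples $(n_i,d_i',k_i)$ that occur need not fall inside an already-settled window, and controlling the stability of $E$ together with the $\alpha$-stability of $(E,V)$ simultaneously is exactly what has so far resisted a general proof — which is why the conjecture is established at present only for $n\le4$ and for $g\ge n^2-1$.
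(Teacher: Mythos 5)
The statement you are trying to prove is not a theorem of the paper but an open conjecture (it is the $m=1$, general-$V$ form of the strong Butler conjecture recorded in Problem~\ref{p5}); the paper supplies no proof, only the partial results of section~\ref{sec6}. Your proposal faithfully reassembles those partial results --- the reduction of $\beta(n,d,n+1)\ge0$ to $d\ge d_0=g+n-[g/(n+1)]$ via Corollaries~\ref{cor2} and~\ref{cor4} is correct, Theorem~\ref{cor3} does settle $d_0\le d\le g+n$ for $g\ge3$, and Proposition~\ref{th10} does settle $d\ge d_1$ --- but it does not close the gap between them, and you concede as much. The range $g+n<d<d_1$ with $n\ge5$ and $3\le g\le n^2-2$ (outside the reach of Theorems~\ref{th7} and~\ref{th8}) is precisely the open content of the conjecture. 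Your two proposed mechanisms for that range are described but not executed: in the elementary-transformation step you never produce the bound on $\dim(V\cap H^0(F))$ that would rule out strict semistability of $E'$ (and you yourself note that the available Clifford/Petri estimates are too weak when $V\ne H^0(E)$); in the extension step you neither exhibit admissible triples $(n_i,d_i',k_i)$ lying in already-settled ranges nor verify that a general extension is $\alpha$-stable for all $\alpha$. So what you have is an accurate survey of the state of the art plus a research plan, not a proof; there is a genuine, and currently unfilled, gap.

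One concrete error in the part you do assert: the window $g+n<d<d_1$ is emphatically not empty when $n\le4$ or $g\ge n^2-1$ (for $n=3$, $g=5$ one has $g+n=8$ and $d_1=13$; for $n=5$, $g=24$ one has $g+n=29$ and $d_1=68$). Theorems~\ref{th7} and~\ref{th8} dispose of those cases by substantive arguments in \cite{Br} and \cite{BBN} (dual span, extensions of coherent systems, and degeneration bounds), not by vacuity; you may invoke them as black boxes covering those pairs $(n,g)$, but not as the statement that nothing lies in the window. Your side remarks are otherwise sound: $\beta(n,d,n+1)$ is increasing in $d$, and the $g=2$ counterexample $U(n,2n,n+1)=\emptyset$ with $\beta(n,2n,n+1)=n^2-n\ge0$ (Theorem~\ref{th9}) correctly shows why the hypothesis $g\ge3$ cannot be dropped.
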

(For the proof of this equivalence, see \cite[Conjecture 9.5 and Proposition 9.6]{BBN}.)
This result narrowly fails for $g=2$ and $d=2n$ (see Theorem \ref{th9}), but is proved in many cases in section \ref{sec6}.  For some further results on the strong form of Butler's Conjecture when $m=1$, see \cite{Mis}.
\end{em}\end{problem}

\begin{problem}\label{p6}\begin{em}
Let $C$ be a general curve of genus $g\ge3$. Calculate $d_0(2,4)$. For the current state of information on this problem, see section \ref{sec7}, especially Corollary \ref{cor5}.
\end{em}\end{problem}

\begin{problem}\label{p7}\begin{em}
Let $C$ be a general curve of genus $g$. Extend the results of section \ref{sec9} to $G(\alpha;2,K,k)$ for arbitrary $\alpha>0$ and hence to $U(2,K,k)$.
\end{em}\end{problem}

\begin{problem}\label{p8}\begin{em}
Related to Problem \ref{p7}, we can state an extended version of the conjecture of Bertram/Feinberg/Mukai.

\begin{conj}  Let $C$ be a general curve of genus $g$. Then
\begin{itemize}
\item $\beta(2,K,k)<0\Rightarrow G(\alpha;2,K,k)=\emptyset$ for all $\alpha>0$
\item $\beta(2,K,k)\ge0\Rightarrow U(2,K,k)\ne\emptyset$ for all $\alpha>0$.
\end{itemize}
\end{conj}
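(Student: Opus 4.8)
I would treat the two implications separately. For the emptiness statement, suppose $\beta(2,K,k)<0$ and let $(E,V)$ of type $(2,2g-2,k)$ with $\det E\cong K$ be $\alpha$-stable for some $\alpha>0$. If $E$ is stable then $(E,V)\in G_0(2,K,k)$ by Lemma~\ref{lem2}; if $E$ is strictly semistable the same holds, since the $\alpha$-slope inequalities forced by the slope-$(g-1)$ line subbundles of $E$ are independent of $\alpha$, while those forced by subbundles of lower degree, once valid, remain valid as $\alpha$ decreases. Either way we contradict Corollary~\ref{cor7} (which rests on Teixidor's Theorem~\ref{th20}). It remains to rule out $E$ unstable: then the maximal line subbundle $L\subset E$ has $\deg L=\ell\ge g$, and $E/L\cong KL^{-1}$ has degree $\delta=2g-2-\ell\le g-2$. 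The $\alpha$-stability of $(E,V)$ forces $\dim(V\cap H^0(L))<k/2$, so the image of $V$ in $H^0(KL^{-1})$ has dimension at least $\lfloor k/2\rfloor+1$, that is, $KL^{-1}$ is a $g^{\lfloor k/2\rfloor}_\delta$. A direct comparison shows that the Brill--Noether number $\rho(g,\lfloor k/2\rfloor,\delta)$ is negative for every $\delta\le g-2$ whenever $\beta(2,K,k)<0$; since $C$ is general, no such line bundle exists, a contradiction.

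For the non-emptiness statement the first, and decisive, step is to establish that $B(2,K,k)\neq\emptyset$ whenever $\beta(2,K,k)\ge0$: this is exactly the Bertram--Feinberg--Mukai conjecture, established by Theorem~\ref{th19} in a large range of cases and, as the text notes, currently being completed by Park using homological methods. Granting this, Theorem~\ref{th20} makes $B(2,K,k)$ smooth of pure dimension $\beta(2,K,k)$, so that $B(2,K,k+1)$ is empty or of strictly smaller dimension; hence the general $E\in B(2,K,k)$ is stable with $h^0(E)=k$, and $V=H^0(E)$ is forced. Now $(E,H^0(E))\in G_0(2,K,k)$ automatically, and since $E$ is stable---so every line subbundle $F$ has $\deg F\le g-2$---one checks that $(E,H^0(E))$ is $\alpha$-stable for all $\alpha>0$ exactly when every line subbundle $F\subset E$ satisfies $h^0(F)\le\lfloor k/2\rfloor$. (The case $k=1$ lies outside the range $k\ge n$; there one verifies directly that a general extension $0\to\mathcal O\to E\to K\to0$ yields a point of $U(2,K,1)$.)

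It thus remains to show that the general $E\in B(2,K,k)$ has no line subbundle with too many sections. For $0\le f\le g-2$ and $j\ge\lfloor k/2\rfloor+1$ let $Z_{f,j}\subseteq B(2,K,k)$ be the locus of bundles admitting a line subbundle $F$ with $\deg F=f$ and $h^0(F)\ge j$; it suffices to prove $\dim Z_{f,j}<\beta(2,K,k)$ for these finitely many pairs. If $j>k$ this is immediate, since $Z_{f,j}\subseteq B(2,K,j)$ and $\beta(2,K,j)<\beta(2,K,k)$. If $j\le k$, write $E$ as an extension $0\to F\to E\to KF^{-1}\to0$ with $F\in W^{j-1}_f(C)$ and class $e\in\mathbb P H^1(F^{2}K^{-1})$; the requirement $h^0(E)\ge k$ amounts to the coboundary map $\partial_e\colon H^0(KF^{-1})\to H^1(F)$ having corank at least $k-j$. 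Using that on a general curve $\dim W^{j-1}_f(C)=\rho(g,j-1,f)$ and $\dim\mathbb P H^1(F^{2}K^{-1})=3g-4-2f$, and the expected codimension $(k-j)^2$ for the corank locus, one finds that among $j\le k$ the worst case is $f=g-2$, $j=\lfloor k/2\rfloor+1$, where an elementary calculation gives $\dim Z_{f,j}\le\beta(2,K,k)-1$. A finite union of proper subvarieties is proper, so the general $E$ avoids all the $Z_{f,j}$, and then $(E,H^0(E))\in U(2,K,k)$.

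The principal obstacle is the first step: the full Bertram--Feinberg--Mukai conjecture $\beta(2,K,k)\ge0\Rightarrow B(2,K,k)\neq\emptyset$ is still open, although Park's work seems likely to settle it; alternatively one might try to produce a point of $U(2,K,k)$ in one stroke by carrying the stability of the coherent system, not merely of the bundle, through Teixidor's degeneration argument. A secondary difficulty is the genericity used in the dimension count: one needs the corank-$(k-j)$ locus of $\partial_e$ to have the expected codimension, equivalently the multiplication map $H^0(KF^{-1})\otimes H^0(KF^{-1})\to H^0((KF^{-1})^{2})$ to be sufficiently non-degenerate for the line bundles involved. For large $k$ this is a statement of maximal-rank type, and it is here---rather than in the extension-theoretic bookkeeping---that the hypothesis that $C$ be general really enters, consistently with the K3 counterexamples of Remark~\ref{rk6}.
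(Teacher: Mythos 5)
The statement you are asked to prove is not a theorem of the paper: it appears in Problem~\ref{p8} precisely as an \emph{open conjecture} (an extension of the Bertram--Feinberg--Mukai conjecture to all $\alpha$), and the paper offers no proof, only the partial results of section~\ref{sec9} (Theorem~\ref{th19}, Corollary~\ref{cor6}, Theorem~\ref{th20}, Corollary~\ref{cor7}). So there is nothing to compare your argument against, and any complete ``proof'' would be suspect. Your write-up is honest on this point, and to your credit the first implication is handled by what looks like a genuine reduction: for $E$ semistable you observe that $\alpha$-stability for one $\alpha$ forces $\alpha$-stability for all small $\alpha$ (the slope-$(g-1)$ subbundles impose $\alpha$-independent conditions, the rest are automatic as $\alpha\to0$), which puts $(E,V)$ in $G_0(2,K,k)$ and contradicts Corollary~\ref{cor7}; and for $E$ unstable your comparison of $\rho(g,\lfloor k/2\rfloor,\delta)$ with $\beta(2,K,k)$ does check out. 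Written out carefully, that part would be a contribution beyond what the paper records.

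The genuine gap is in the second implication, and it is unavoidable: your argument begins by \emph{assuming} that $\beta(2,K,k)\ge0$ implies $B(2,K,k)\ne\emptyset$, which is exactly the Bertram--Feinberg--Mukai conjecture and is the open content of the problem (Theorem~\ref{th19} covers only a restricted range of $g$ and $k$, and Park's work is cited as unpublished). Everything after that is a program rather than a proof: the passage from $B(2,K,k)\ne\emptyset$ to $U(2,K,k)\ne\emptyset$ rests on the claim that the corank loci $Z_{f,j}$ have their expected codimension $(k-j)^2$ in the extension spaces, i.e.\ on a maximal-rank statement for the multiplication maps $H^0(KF^{-1})\otimes H^0(KF^{-1})\to H^0(K^2F^{-2})$ for \emph{all} the special line bundles $F$ involved, which you assert but do not prove; Remark~\ref{rk6} shows this genericity really can fail on Petri curves lying on K3 surfaces, so it cannot be dismissed as bookkeeping. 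You flag both obstacles yourself, so the proposal should be read as a plausible strategy for Problems~\ref{p7} and~\ref{p8}, not as a proof of the conjecture.
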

For the current state of information on this problem, see section \ref{sec9}.
\end{em}\end{problem}

Turning now to special curves, we give two problems.

\begin{problem}\label{p9}\begin{em}
Solve the basic problems for hyperelliptic curves. For Brill-Noether loci, a strong result is known \cite[section 6]{BMNO}. One could try to generalize this to coherent systems.
\end{em}\end{problem}

\begin{problem}\label{p10}\begin{em}
There are several possible notions of higher rank Clifford indices. Determine what values these indices can take and what effect they have on the geometry of the moduli spaces of coherent systems. What other invariants can be introduced (generalizing gonality, Clifford dimension, ...)?
\end{em}\end{problem}

We finish with two problems on a topic not touched on in this survey.

\begin{problem}\label{p11}\begin{em}
The moduli spaces can be constructed in finite characteristic (see \cite{KN}). Solve the basic problems in this case and compare (or contrast) them with those for characteristic $0$. Much of the basic theory should be unchanged, but detailed structure of the moduli spaces might well be different.
\end{em}\end{problem}

\begin{problem}\label{p12}\begin{em}
Obtain results for coherent systems on curves defined over finite fields. This would allow the possibility of computing zeta-functions and hence obtaining cohomological information in the characteristic $0$ case.
\end{em}\end{problem}

\end{document}